\newtheorem{proposition}{Proposition}[section]
\newtheorem{lemma}[proposition]{Lemma}
\newtheorem{corollary}[proposition]{Corollary}
\newtheorem{theorem}[proposition]{Theorem}
\theoremstyle{definition}
\theoremstyle{remark}
\newtheorem{remark}[proposition]{Remark}
\newtheorem{remarks}[proposition]{Remarks}\newcommand{\thlabel}[1]{\label{th:#1}}
\newcommand{\thref}[1]{Theorem~\ref{th:#1}}
\newcommand{\selabel}[1]{\label{se:#1}}
\newcommand{\seref}[1]{Section~\ref{se:#1}}
\newcommand{\lelabel}[1]{\label{le:#1}}
\newcommand{\leref}[1]{Lemma~\ref{le:#1}}
\newcommand{\prlabel}[1]{\label{pr:#1}}
\newcommand{\prref}[1]{Proposition~\ref{pr:#1}}
\newcommand{\eqlabel}[1]{\label{eq:#1}}
\newcommand{\equref}[1]{(\ref{eq:#1})}
\def\ra{\rightarrow}
\def\cd{\cdot}
\def\Id{{\rm Id}}
\def\htw{{H(2)}}
\newcommand{\alp}{\sigma}
\def\ot{\otimes}
\def\va{\varepsilon}
\def\un{\underline}
\def\mf{\mathfrak}
\def\mfq{\mf {q}}
\def\le{\langle}
\def\ri{\rangle}
\def\l{\lambda}
\def\tie{\mathrel>\joinrel\mathrel\triangleleft}
\def\va{\varepsilon}
\def\tr{\triangleright}
\def\ra{\rightarrow}
\def\a{\alpha}
\def\b{\beta}
\def\o{\omega}
\def\ov{\overline}
\def\cal{\mathcal}
\def\un{\underline}
\newcommand{\Cc}{\cal C}
\def\tie{\mathrel>\joinrel\mathrel\triangleleft}
 \newcommand{\gbeg}[2]{
   \unitlength=1pt
   \grrow = #2
   \grcolumn = 0
   \grcalca = #1
   \grcalcb = #2
   \multiply \grcalca by \factor
   \grwidth = \grcalca
   \multiply \grcalcb by \factor
   \begin{minipage}{\grcalca pt}
   \begin{picture}(\grcalca,\grcalcb)
   \advance \grcalcb by -\factor
   \put(0, \grcalcb){\line(1,0){\grwidth}} }
 \newcommand{\gend}{
   \put(0, \factor){\line(1,0){\grwidth}}
   \end{picture}
   {\vskip2.5ex}
   \end{minipage} }
 \newcommand{\gnl}{
   \advance \grrow by -1
   \grcolumn = 0}
 \newcommand{\gvac}[1]{       
   \advance \grcolumn by #1} 
 \newcommand{\gcl}[1]{
   \grcalca = \grcolumn
   \multiply \grcalca by \factor
   \advance \grcalca by \hfactor
   \grcalcb = \grrow
   \multiply \grcalcb by \factor
   \grcalcc = #1
   \multiply \grcalcc by \factor
   \put(\grcalca,\grcalcb) {\line(0,-1){\grcalcc}} 
   \advance \grcolumn by 1}
 \newcommand{\gcn}[4]{
   \grcalca = \grcolumn
   \multiply \grcalca by \factor
   \grcalci = #3
   \multiply \grcalci by \hfactor
   \advance \grcalca by \grcalci
   \grcalcb = \grcolumn
   \multiply \grcalcb by \factor 
   \grcalci = #3
   \advance \grcalci by #4
   \multiply \grcalci by \qfactor
   \advance \grcalcb by \grcalci
   \grcalcc = \grcolumn
   \multiply \grcalcc by \factor
   \grcalci = #4
   \multiply \grcalci by \hfactor
   \advance \grcalcc by \grcalci
   \grcalcd = \grrow
   \multiply \grcalcd by \factor 
   \grcalce = \grrow
   \multiply \grcalce by \factor 
   \grcalci = #2
   \multiply \grcalci by \tfactor
   \advance \grcalce by -\grcalci
   \grcalcf = \grrow
   \multiply \grcalcf by \factor 
   \grcalci = #2
   \multiply \grcalci by \hfactor
   \advance \grcalcf by -\grcalci
   \grcalcg = \grrow
   \multiply \grcalcg by \factor 
   \grcalci = #2
   \multiply \grcalci by \tfactor
   \multiply \grcalci by 2
   \advance \grcalcg by -\grcalci
   \grcalch = \grrow
   \advance \grcalch by -#2
   \multiply \grcalch by \factor 
   \qbezier(\grcalca,\grcalcd)(\grcalca,\grcalce)(\grcalcb,\grcalcf) 
   \qbezier(\grcalcb,\grcalcf)(\grcalcc,\grcalcg)(\grcalcc,\grcalch) 
   \advance \grcolumn by #1}
 \newcommand{\gnot}[1]{
   \grcalca = \grcolumn
   \multiply \grcalca by \factor
   \advance \grcalca by \hfactor
   \grcalcb = \grrow
   \multiply \grcalcb by \factor
   \advance \grcalcb by -\hfactor
   \put(\grcalca,\grcalcb) {\makebox(0,0){$\scriptstyle #1$}} }
 \newcommand{\got}[2]{
   \grcalca = \grcolumn
   \multiply \grcalca by \factor
   \grcalcc = #1
   \multiply \grcalcc by \hfactor
   \advance \grcalca by \grcalcc
   \grcalcb = \grrow
   \multiply \grcalcb by \factor
   \advance \grcalcb by -\tfactor
   \advance \grcalcb by -\tfactor
   \put(\grcalca,\grcalcb){\makebox(0,0)[b]{$#2$}}
   \advance \grcolumn by #1}
 \newcommand{\gob}[2]{
   \grcalca = \grcolumn
   \multiply \grcalca by \factor
   \grcalcc = #1
   \multiply \grcalcc by \hfactor
   \advance \grcalca by \grcalcc
   \put(\grcalca,0){\makebox(0,0)[b]{$#2$}}
   \advance \grcolumn by #1}
 \newcommand{\gmu}{  
   \grcalca = \grcolumn
   \advance \grcalca by 1
   \multiply \grcalca by \factor
   \grcalcb = \grrow
   \multiply \grcalcb by \factor
   \grcalcc = \factor
   \advance \grcalcc by \hfactor
   \put(\grcalca,\grcalcb){\oval(\factor,\grcalcc)[b]}
   \advance \grcalcb by -\hfactor
   \advance \grcalcb by -\qfactor
   \put(\grcalca,\grcalcb) {\line(0,-1){\qfactor}} 
   \advance \grcolumn by 2}
 \newcommand{\gcmu}{   
   \grcalca = \grcolumn
   \advance \grcalca by 1
   \multiply \grcalca by \factor
   \grcalcb = \grrow
   \advance \grcalcb by -1
   \multiply \grcalcb by \factor
   \grcalcc = \factor
   \advance \grcalcc by \hfactor
   \put(\grcalca,\grcalcb){\oval(\factor,\grcalcc)[t]}
   \advance \grcalcb by \factor
   \put(\grcalca,\grcalcb) {\line(0,-1){\qfactor}} 
   \advance \grcolumn by 2}
 \newcommand{\glm}{
   \grcalca = \grcolumn
   \multiply \grcalca by \factor
   \advance \grcalca by \hfactor
   \grcalcb = \grcalca
   \advance \grcalcb by \factor
   \grcalcc = \grrow
   \multiply \grcalcc by \factor
   \grcalcd = \grcalcc
   \advance \grcalcd by -\tfactor
   \grcalce = \grcalcd
   \advance \grcalce by -\tfactor
   \put(\grcalca, \grcalcc){\line(0,-1){\tfactor}}
   \put(\grcalca, \grcalcd){\line(1,0){\factor}}
   \put(\grcalca, \grcalcd){\line(3,-1){\factor}}
   \put(\grcalcb, \grcalcc){\line(0,-1){\factor}}
   \advance \grcolumn by 2}
 \newcommand{\grm}{
   \grcalcb = \grcolumn
   \multiply \grcalcb by \factor
   \advance \grcalcb by \hfactor
   \grcalca = \grcalcb
   \advance \grcalca by \factor
   \grcalcc = \grrow
   \multiply \grcalcc by \factor
   \grcalcd = \grcalcc
   \advance \grcalcd by -\tfactor
   \grcalce = \grcalcd
   \advance \grcalce by -\tfactor
   \put(\grcalca, \grcalcc){\line(0,-1){\tfactor}}
   \put(\grcalca, \grcalcd){\line(-1,0){\factor}}
   \put(\grcalca, \grcalcd){\line(-3,-1){\factor}}
   \put(\grcalcb, \grcalcc){\line(0,-1){\factor}}
   \advance \grcolumn by 2}
 \newcommand{\glcm}{
   \grcalca = \grcolumn
   \multiply \grcalca by \factor
   \advance \grcalca by \hfactor
   \grcalcb = \grcalca
   \advance \grcalcb by \factor
   \grcalcc = \grrow
   \advance \grcalcc by -1
   \multiply \grcalcc by \factor
   \grcalcd = \grcalcc
   \advance \grcalcd by \tfactor
   \grcalce = \grcalcd
   \advance \grcalce by \tfactor
   \put(\grcalca, \grcalcc){\line(0,1){\tfactor}}
   \put(\grcalca, \grcalcd){\line(1,0){\factor}}
   \put(\grcalca, \grcalcd){\line(3,1){\factor}}
   \put(\grcalcb, \grcalcc){\line(0,1){\factor}}
   \advance \grcolumn by 2}
 \newcommand{\grcm}{
   \grcalcb = \grcolumn
   \multiply \grcalcb by \factor
   \advance \grcalcb by \hfactor
   \grcalca = \grcalcb
   \advance \grcalca by \factor
   \grcalcc = \grrow
   \advance \grcalcc by -1
   \multiply \grcalcc by \factor
   \grcalcd = \grcalcc
   \advance \grcalcd by \tfactor
   \grcalce = \grcalcd
   \advance \grcalce by \tfactor
   \put(\grcalca, \grcalcc){\line(0,1){\tfactor}}
   \put(\grcalca, \grcalcd){\line(-1,0){\factor}}
   \put(\grcalca, \grcalcd){\line(-3,1){\factor}}
   \put(\grcalcb, \grcalcc){\line(0,1){\factor}}
   \advance \grcolumn by 2}
 \newcommand{\gwmu}[1]{    
   \grcalca = \grcolumn
   \multiply \grcalca by \factor
   \grcalcd = \hfactor
   \multiply \grcalcd by #1
   \advance \grcalca by \grcalcd
   \grcalcb = \grrow
   \multiply \grcalcb by \factor
   \grcalcc = \factor
   \advance \grcalcc by \hfactor
   \grcalcd = #1
   \advance \grcalcd by -1
   \multiply \grcalcd by \factor
   \put(\grcalca,\grcalcb){\oval(\grcalcd,\grcalcc)[b]}
   \advance \grcalcb by -\hfactor
   \advance \grcalcb by -\qfactor
   \put(\grcalca,\grcalcb) {\line(0,-1){\qfactor}} 
   \advance \grcolumn by #1}
 \newcommand{\gwcm}[1]{   
   \grcalca = \grcolumn
   \multiply \grcalca by \factor
   \grcalcd = \hfactor
   \multiply \grcalcd by #1
   \advance \grcalca by \grcalcd
   \grcalcb = \grrow
   \advance \grcalcb by -1
   \multiply \grcalcb by \factor
   \grcalcc = \factor
   \advance \grcalcc by \hfactor
   \grcalcd = #1
   \advance \grcalcd by -1
   \multiply \grcalcd by \factor
   \put(\grcalca,\grcalcb){\oval(\grcalcd,\grcalcc)[t]}
   \advance \grcalcb by \factor
   \put(\grcalca,\grcalcb) {\line(0,-1){\qfactor}} 
   \advance \grcolumn by #1}
 \newcommand{\gwmuc}[1]{    
   \grcalca = \grcolumn
   \multiply \grcalca by \factor
   \advance \grcalca by \hfactor
   \grcalcb = \grrow
   \multiply \grcalcb by \factor
   \grcalcc = #1
   \advance \grcalcc by -1
   \multiply \grcalcc by \factor
   \put(\grcalca,\grcalcb){\line(1,0){\grcalcc}}
   \advance \grcalca by -\hfactor
   \grcalcd = \hfactor
   \multiply \grcalcd by #1
   \advance \grcalca by \grcalcd
   \grcalcc = \factor
   \advance \grcalcc by \hfactor
   \grcalcd = #1
   \advance \grcalcd by -1
   \multiply \grcalcd by \factor
   \put(\grcalca,\grcalcb){\oval(\grcalcd,\grcalcc)[b]}
   \advance \grcalcb by -\hfactor
   \advance \grcalcb by -\qfactor
   \put(\grcalca,\grcalcb) {\line(0,-1){\qfactor}} 
   \advance \grcolumn by #1}
 \newcommand{\gwcmc}[1]{   
   \grcalca = \grcolumn
   \multiply \grcalca by \factor
   \advance \grcalca by \hfactor
   \grcalcb = \grrow
   \multiply \grcalcb by \factor
   \advance \grcalcb by -\factor
   \grcalcc = #1
   \advance \grcalcc by -1
   \multiply \grcalcc by \factor
   \put(\grcalca,\grcalcb){\line(1,0){\grcalcc}}
   \grcalcd = #1
   \advance \grcalcd by -1
   \multiply \grcalcd by \hfactor
   \advance \grcalca by \grcalcd
   \grcalcc = \factor
   \advance \grcalcc by \hfactor
   \grcalcd = #1
   \advance \grcalcd by -1
   \multiply \grcalcd by \factor
   \put(\grcalca,\grcalcb){\oval(\grcalcd,\grcalcc)[t]}
   \advance \grcalcb by \factor
   \put(\grcalca,\grcalcb) {\line(0,-1){\qfactor}} 
   \advance \grcolumn by #1}
 \newcommand{\gev}{  
   \grcalca = \grcolumn
   \advance \grcalca by 1
   \multiply \grcalca by \factor
   \grcalcb = \grrow
   \multiply \grcalcb by \factor
   \grcalcc = \factor
   \advance \grcalcc by \hfactor
   \put(\grcalca,\grcalcb){\oval(\factor,\grcalcc)[b]}
   \advance \grcolumn by 2}
 \newcommand{\gdb}{   
   \grcalca = \grcolumn
   \advance \grcalca by 1
   \multiply \grcalca by \factor
   \grcalcb = \grrow
   \advance \grcalcb by -1
   \multiply \grcalcb by \factor
   \grcalcc = \factor
   \advance \grcalcc by \hfactor
   \put(\grcalca,\grcalcb){\oval(\factor,\grcalcc)[t]}
   \advance \grcolumn by 2}
 \newcommand{\gwev}[1]{    
   \grcalca = \grcolumn
   \multiply \grcalca by \factor
   \grcalcd = \hfactor
   \multiply \grcalcd by #1
   \advance \grcalca by \grcalcd
   \grcalcb = \grrow
   \multiply \grcalcb by \factor
   \grcalcc = \factor
   \advance \grcalcc by \hfactor
   \grcalcd = #1
   \advance \grcalcd by -1
   \multiply \grcalcd by \factor
   \put(\grcalca,\grcalcb){\oval(\grcalcd,\grcalcc)[b]}
   \advance \grcolumn by #1}
 \newcommand{\gwdb}[1]{   
   \grcalca = \grcolumn
   \multiply \grcalca by \factor
   \grcalcd = \hfactor
   \multiply \grcalcd by #1
   \advance \grcalca by \grcalcd
   \grcalcb = \grrow
   \advance \grcalcb by -1
   \multiply \grcalcb by \factor
   \grcalcc = \factor
   \advance \grcalcc by \hfactor
   \grcalcd = #1
   \advance \grcalcd by -1
   \multiply \grcalcd by \factor
   \put(\grcalca,\grcalcb){\oval(\grcalcd,\grcalcc)[t]}
   \advance \grcolumn by #1}
 \newcommand{\gbr}{
   \grcalca = \grcolumn
   \multiply \grcalca by \factor
   \advance \grcalca by \hfactor
   \grcalcb = \grcalca
   \advance \grcalcb by \hfactor
   \grcalcc = \grcalca
   \advance \grcalcc by \factor
   \grcalcd = \grrow
   \multiply \grcalcd by \factor
   \grcalce = \grcalcd
   \advance \grcalce by -\tfactor
   \grcalcf = \grcalcd
   \advance \grcalcf by -\hfactor
   \grcalcg = \grcalce
   \advance \grcalcg by -\tfactor
   \grcalch = \grcalcd
   \advance \grcalch by -\factor
   \qbezier(\grcalca,\grcalcd)(\grcalca,\grcalce)(\grcalcb,\grcalcf) 
   \qbezier(\grcalcb,\grcalcf)(\grcalcc,\grcalcg)(\grcalcc,\grcalch) 
   \advance \grcalcf by -\dfactor
   \advance \grcalcb by -\sfactor
   \qbezier(\grcalca,\grcalch)(\grcalca,\grcalcg)(\grcalcb,\grcalcf) 
   \advance \grcalcf by \sfactor
   \advance \grcalcb by \tfactor
   \qbezier(\grcalcc,\grcalcd)(\grcalcc,\grcalce)(\grcalcb,\grcalcf) 
   \advance \grcolumn by 2}
 \newcommand{\gibr}{
   \grcalca = \grcolumn
   \multiply \grcalca by \factor
   \advance \grcalca by \hfactor
   \grcalcb = \grcalca
   \advance \grcalcb by \hfactor
   \grcalcc = \grcalca
   \advance \grcalcc by \factor
   \grcalcd = \grrow
   \multiply \grcalcd by \factor
   \grcalce = \grcalcd
   \advance \grcalce by -\tfactor
   \grcalcf = \grcalcd
   \advance \grcalcf by -\hfactor
   \grcalcg = \grcalce
   \advance \grcalcg by -\tfactor
   \grcalch = \grcalcd
   \advance \grcalch by -\factor
   \qbezier(\grcalcc,\grcalcd)(\grcalcc,\grcalce)(\grcalcb,\grcalcf) 
   \qbezier(\grcalcb,\grcalcf)(\grcalca,\grcalcg)(\grcalca,\grcalch) 
   \advance \grcalcf by -\dfactor
   \advance \grcalcb by \sfactor
   \qbezier(\grcalcc,\grcalch)(\grcalcc,\grcalcg)(\grcalcb,\grcalcf) 
   \advance \grcalcf by \sfactor
   \advance \grcalcb by -\tfactor
   \qbezier(\grcalca,\grcalcd)(\grcalca,\grcalce)(\grcalcb,\grcalcf) 
   \advance \grcolumn by 2}
\newcommand{\gsy}{
   \grcalca = \grcolumn
   \multiply \grcalca by \factor
   \advance \grcalca by \hfactor
   \grcalcb = \grcalca
   \advance \grcalcb by \hfactor
   \grcalcc = \grcalca
   \advance \grcalcc by \factor
   \grcalcd = \grrow
   \multiply \grcalcd by \factor
   \grcalce = \grcalcd
   \advance \grcalce by -\tfactor
   \grcalcf = \grcalcd
   \advance \grcalcf by -\hfactor
   \grcalcg = \grcalce
   \advance \grcalcg by -\tfactor
   \grcalch = \grcalcd
   \advance \grcalch by -\factor
   \qbezier(\grcalcc,\grcalcd)(\grcalcc,\grcalce)(\grcalcb,\grcalcf) 
   \qbezier(\grcalcb,\grcalcf)(\grcalca,\grcalcg)(\grcalca,\grcalch) 
   \advance \grcalcf by -\dfactor
   \advance \grcalcb by \sfactor
   \qbezier(\grcalcc,\grcalch)(\grcalcc,\grcalcg)(\grcalcb,\grcalcf) 
   \qbezier(\grcalca,\grcalcd)(\grcalca,\grcalce)(\grcalcb,\grcalcf) 
   \advance \grcolumn by 2}
 \newcommand{\gbrc}{
   \grcalca = \grcolumn
   \multiply \grcalca by \factor
   \advance \grcalca by \hfactor
   \grcalcb = \grcalca
   \advance \grcalcb by \hfactor
   \grcalcc = \grcalca
   \advance \grcalcc by \factor
   \grcalcd = \grrow
   \multiply \grcalcd by \factor
   \grcalce = \grcalcd
   \advance \grcalce by -\tfactor
   \grcalcf = \grcalcd
   \advance \grcalcf by -\hfactor
   \grcalcg = \grcalce
   \advance \grcalcg by -\tfactor
   \grcalch = \grcalcd
   \advance \grcalch by -\factor
   \put(\grcalcb,\grcalcf){\circle{\hfactor}}
   \qbezier(\grcalca,\grcalcd)(\grcalca,\grcalce)(\grcalcb,\grcalcf) 
   \qbezier(\grcalcb,\grcalcf)(\grcalcc,\grcalcg)(\grcalcc,\grcalch) 
   \advance \grcalcf by -\dfactor
   \advance \grcalcb by -\sfactor
   \qbezier(\grcalca,\grcalch)(\grcalca,\grcalcg)(\grcalcb,\grcalcf) 
   \advance \grcalcf by \sfactor
   \advance \grcalcb by \tfactor
   \qbezier(\grcalcc,\grcalcd)(\grcalcc,\grcalce)(\grcalcb,\grcalcf) 
   \advance \grcolumn by 2}
 \newcommand{\gibrc}{
   \grcalca = \grcolumn
   \multiply \grcalca by \factor
   \advance \grcalca by \hfactor
   \grcalcb = \grcalca
   \advance \grcalcb by \hfactor
   \grcalcc = \grcalca
   \advance \grcalcc by \factor
   \grcalcd = \grrow
   \multiply \grcalcd by \factor
   \grcalce = \grcalcd
   \advance \grcalce by -\tfactor
   \grcalcf = \grcalcd
   \advance \grcalcf by -\hfactor
   \grcalcg = \grcalce
   \advance \grcalcg by -\tfactor
   \grcalch = \grcalcd
   \advance \grcalch by -\factor
   \put(\grcalcb,\grcalcf){\circle{\hfactor}}
   \qbezier(\grcalcc,\grcalcd)(\grcalcc,\grcalce)(\grcalcb,\grcalcf) 
   \qbezier(\grcalcb,\grcalcf)(\grcalca,\grcalcg)(\grcalca,\grcalch) 
   \advance \grcalcf by -\dfactor
   \advance \grcalcb by \sfactor
   \qbezier(\grcalcc,\grcalch)(\grcalcc,\grcalcg)(\grcalcb,\grcalcf) 
   \advance \grcalcf by \sfactor
   \advance \grcalcb by -\tfactor
   \qbezier(\grcalca,\grcalcd)(\grcalca,\grcalce)(\grcalcb,\grcalcf) 
   \advance \grcolumn by 2}
 \newcommand{\gu}[1]{
   \grcalca = \grcolumn
   \multiply \grcalca by \factor
   \grcalcd = \hfactor
   \multiply \grcalcd by #1
   \advance \grcalca by \grcalcd
   \grcalcb = \grrow
   \advance \grcalcb by -1
   \multiply \grcalcb by \factor
   \put(\grcalca,\grcalcb) {\line(0,1){\hfactor}} 
   \advance \grcalcb by \hfactor
   \put(\grcalca,\grcalcb) {\circle*{3}}
   \advance \grcolumn by #1}
 \newcommand{\gcu}[1]{
   \grcalca = \grcolumn
   \multiply \grcalca by \factor
   \grcalcd = \hfactor
   \multiply \grcalcd by #1
   \advance \grcalca by \grcalcd
   \grcalcb = \grrow
   \multiply \grcalcb by \factor
   \put(\grcalca,\grcalcb) {\line(0,-1){\hfactor}} 
   \advance \grcalcb by -\hfactor
   \put(\grcalca,\grcalcb) {\circle*{3}}
   \advance \grcolumn by #1}
 \newcommand{\gmp}[1]{
   \grcalca = \grcolumn
   \multiply \grcalca by \factor
   \advance \grcalca by \hfactor
   \grcalcb = \grrow
   \multiply \grcalcb by \factor
   \put(\grcalca,\grcalcb) {\line(0,-1){\dfactor}} 
   \advance \grcalcb by -\factor
   \put(\grcalca,\grcalcb) {\line(0,1){\dfactor}} 
   \advance \grcalcb by \hfactor
   \grcalcc = \factor
   \advance \grcalcc by -\qfactor
   \put(\grcalca,\grcalcb) {\circle{\grcalcc}}
   \put(\grcalca,\grcalcb) {\makebox(0,0){$\scriptstyle #1$}}
   \advance \grcolumn by 1}
 \newcommand{\gbmp}[1]{
   \grcalca = \grcolumn
   \multiply \grcalca by \factor
   \advance \grcalca by \hfactor
   \grcalcb = \grrow
   \multiply \grcalcb by \factor
   \put(\grcalca,\grcalcb) {\line(0,-1){\dfactor}} 
   \advance \grcalcb by -\factor
   \put(\grcalca,\grcalcb) {\line(0,1){\dfactor}} 
   \advance \grcalca by -\hfactor
   \advance \grcalca by \dfactor
   \advance \grcalcb by \dfactor
   \grcalcc = \factor
   \advance \grcalcc by -\sfactor
   \put(\grcalca,\grcalcb) {\framebox(\grcalcc,\grcalcc){$\scriptstyle #1$}}
   \advance \grcolumn by 1}
 \newcommand{\gbmpt}[1]{
   \grcalca = \grcolumn
   \multiply \grcalca by \factor
   \advance \grcalca by \hfactor
   \grcalcb = \grrow
   \multiply \grcalcb by \factor
   \put(\grcalca,\grcalcb) {\line(0,-1){\dfactor}} 
   \advance \grcalcb by -\factor
   \advance \grcalca by -\hfactor
   \advance \grcalca by \dfactor
   \advance \grcalcb by \dfactor
   \grcalcc = \factor
   \advance \grcalcc by -\sfactor
   \put(\grcalca,\grcalcb) {\framebox(\grcalcc,\grcalcc){$\scriptstyle #1$}}
   \advance \grcolumn by 1}
 \newcommand{\gbmpb}[1]{
   \grcalca = \grcolumn
   \multiply \grcalca by \factor
   \advance \grcalca by \hfactor
   \grcalcb = \grrow
   \multiply \grcalcb by \factor
   \advance \grcalcb by -\factor
   \put(\grcalca,\grcalcb) {\line(0,1){\dfactor}} 
   \advance \grcalca by -\hfactor
   \advance \grcalca by \dfactor
   \advance \grcalcb by \dfactor
   \grcalcc = \factor
   \advance \grcalcc by -\sfactor
   \put(\grcalca,\grcalcb) {\framebox(\grcalcc,\grcalcc){$\scriptstyle #1$}}
   \advance \grcolumn by 1}
 \newcommand{\gbmpn}[1]{
   \grcalca = \grcolumn
   \multiply \grcalca by \factor
   \advance \grcalca by \hfactor
   \grcalcb = \grrow
   \multiply \grcalcb by \factor
   \advance \grcalcb by -\factor
   \advance \grcalca by -\hfactor
   \advance \grcalca by \dfactor
   \advance \grcalcb by \dfactor
   \grcalcc = \factor
   \advance \grcalcc by -\sfactor
   \put(\grcalca,\grcalcb) {\framebox(\grcalcc,\grcalcc){$\scriptstyle #1$}}
   \advance \grcolumn by 1}
 \newcommand{\glmptb}{    
   \grcalca = \grcolumn
   \multiply \grcalca by \factor
   \advance \grcalca by \hfactor
   \grcalcb = \grrow
   \multiply \grcalcb by \factor
   \put(\grcalca,\grcalcb) {\line(0,-1){\dfactor}} 
   \advance \grcalcb by -\factor
   \put(\grcalca,\grcalcb) {\line(0,1){\dfactor}} 
   \advance \grcalca by -\hfactor
   \advance \grcalca by \dfactor
   \advance \grcalcb by \dfactor
   \put(\grcalca,\grcalcb) {\line(1,0){\factor}} 
   \advance \grcalcb by \factor
   \advance \grcalcb by -\sfactor
   \put(\grcalca,\grcalcb) {\line(1,0){\factor}} 
   \grcalcc = \factor
   \advance \grcalcc by -\sfactor
   \put(\grcalca,\grcalcb) {\line(0,-1){\grcalcc}} 
   \advance \grcolumn by 1}
 \newcommand{\glmpt}{    
   \grcalca = \grcolumn
   \multiply \grcalca by \factor
   \advance \grcalca by \hfactor
   \grcalcb = \grrow
   \multiply \grcalcb by \factor
   \put(\grcalca,\grcalcb) {\line(0,-1){\dfactor}} 
   \advance \grcalca by -\hfactor
   \advance \grcalca by \dfactor
   \advance \grcalcb by -\dfactor
   \put(\grcalca,\grcalcb) {\line(1,0){\factor}} 
   \advance \grcalcb by -\factor
   \advance \grcalcb by \sfactor
   \put(\grcalca,\grcalcb) {\line(1,0){\factor}} 
   \grcalcc = \factor
   \advance \grcalcc by -\sfactor
   \put(\grcalca,\grcalcb) {\line(0,1){\grcalcc}} 
   \advance \grcolumn by 1}
 \newcommand{\glmpb}{    
   \grcalca = \grcolumn
   \multiply \grcalca by \factor
   \advance \grcalca by \hfactor
   \grcalcb = \grrow
   \multiply \grcalcb by \factor
   \advance \grcalcb by -\factor
   \put(\grcalca,\grcalcb) {\line(0,1){\dfactor}} 
   \advance \grcalca by -\hfactor
   \advance \grcalca by \dfactor
   \advance \grcalcb by \dfactor
   \put(\grcalca,\grcalcb) {\line(1,0){\factor}} 
   \advance \grcalcb by \factor
   \advance \grcalcb by -\sfactor
   \put(\grcalca,\grcalcb) {\line(1,0){\factor}} 
   \grcalcc = \factor
   \advance \grcalcc by -\sfactor
   \put(\grcalca,\grcalcb) {\line(0,-1){\grcalcc}} 
   \advance \grcolumn by 1}
 \newcommand{\glmp}{    
   \grcalca = \grcolumn
   \multiply \grcalca by \factor
   \advance \grcalca by \dfactor
   \grcalcb = \grrow
   \multiply \grcalcb by \factor
   \advance \grcalcb by -\dfactor
   \put(\grcalca,\grcalcb) {\line(1,0){\factor}} 
   \advance \grcalcb by -\factor
   \advance \grcalcb by \sfactor
   \put(\grcalca,\grcalcb) {\line(1,0){\factor}} 
   \grcalcc = \factor
   \advance \grcalcc by -\sfactor
   \put(\grcalca,\grcalcb) {\line(0,1){\grcalcc}} 
   \advance \grcolumn by 1}
 \newcommand{\gcmptb}{    
   \grcalca = \grcolumn
   \multiply \grcalca by \factor
   \advance \grcalca by \hfactor
   \grcalcb = \grrow
   \multiply \grcalcb by \factor
   \put(\grcalca,\grcalcb) {\line(0,-1){\dfactor}} 
   \advance \grcalcb by -\factor
   \put(\grcalca,\grcalcb) {\line(0,1){\dfactor}} 
   \advance \grcalca by -\hfactor
   \advance \grcalcb by \dfactor
   \put(\grcalca,\grcalcb) {\line(1,0){\factor}} 
   \advance \grcalcb by \factor
   \advance \grcalcb by -\sfactor
   \put(\grcalca,\grcalcb) {\line(1,0){\factor}} 
   \advance \grcolumn by 1}
\newcommand{\gmpcu}[1]{
   \grcalca = \grcolumn
   \multiply \grcalca by \factor
   \advance \grcalca by \hfactor
   \grcalcb = \grrow
   \multiply \grcalcb by \factor
   \put(\grcalca,\grcalcb) {\line(0,-1){\dfactor}} 
   \advance \grcalcb by -\factor
   \advance \grcalcb by \hfactor
   \grcalcc = \factor
   \advance \grcalcc by -\qfactor
   \put(\grcalca,\grcalcb) {\circle{\grcalcc}}
   \put(\grcalca,\grcalcb) {\makebox(0,0){$\scriptstyle #1$}}
   \advance \grcolumn by 1}
\newcommand{\gmpu}[1]{
   \grcalca = \grcolumn
   \multiply \grcalca by \factor
   \advance \grcalca by \hfactor
   \grcalcb = \grrow
   \multiply \grcalcb by \factor
   \advance \grcalcb by -\factor
   \put(\grcalca,\grcalcb) {\line(0,1){\dfactor}} 
   \advance \grcalcb by \hfactor
   \grcalcc = \factor
   \advance \grcalcc by -\qfactor
   \put(\grcalca,\grcalcb) {\circle{\grcalcc}}
   \put(\grcalca,\grcalcb) {\makebox(0,0){$\scriptstyle #1$}}
   \advance \grcolumn by 1}      
 \newcommand{\gcmpt}{    
   \grcalca = \grcolumn
   \multiply \grcalca by \factor
   \advance \grcalca by \hfactor
   \grcalcb = \grrow
   \multiply \grcalcb by \factor
   \put(\grcalca,\grcalcb) {\line(0,-1){\dfactor}} 
   \advance \grcalcb by -\factor
   \advance \grcalca by -\hfactor
   \advance \grcalcb by \dfactor
   \put(\grcalca,\grcalcb) {\line(1,0){\factor}} 
   \advance \grcalcb by \factor
   \advance \grcalcb by -\sfactor
   \put(\grcalca,\grcalcb) {\line(1,0){\factor}} 
   \advance \grcolumn by 1}
 \newcommand{\gcmpb}{    
   \grcalca = \grcolumn
   \multiply \grcalca by \factor
   \advance \grcalca by \hfactor
   \grcalcb = \grrow
   \multiply \grcalcb by \factor
   \advance \grcalcb by -\factor
   \put(\grcalca,\grcalcb) {\line(0,1){\dfactor}} 
   \advance \grcalca by -\hfactor
   \advance \grcalcb by \dfactor
   \put(\grcalca,\grcalcb) {\line(1,0){\factor}} 
   \advance \grcalcb by \factor
   \advance \grcalcb by -\sfactor
   \put(\grcalca,\grcalcb) {\line(1,0){\factor}} 
   \advance \grcolumn by 1}
 \newcommand{\gcmp}{    
   \grcalca = \grcolumn
   \multiply \grcalca by \factor
   \grcalcb = \grrow
   \multiply \grcalcb by \factor
   \advance \grcalcb by -\factor
   \advance \grcalcb by \dfactor
   \put(\grcalca,\grcalcb) {\line(1,0){\factor}} 
   \advance \grcalcb by \factor
   \advance \grcalcb by -\sfactor
   \put(\grcalca,\grcalcb) {\line(1,0){\factor}} 
   \advance \grcolumn by 1}
 \newcommand{\grmptb}{    
   \grcalca = \grcolumn
   \multiply \grcalca by \factor
   \advance \grcalca by \hfactor
   \grcalcb = \grrow
   \multiply \grcalcb by \factor
   \put(\grcalca,\grcalcb) {\line(0,-1){\dfactor}} 
   \advance \grcalcb by -\factor
   \put(\grcalca,\grcalcb) {\line(0,1){\dfactor}} 
   \advance \grcalca by \hfactor
   \advance \grcalca by -\dfactor
   \advance \grcalcb by \dfactor
   \put(\grcalca,\grcalcb) {\line(-1,0){\factor}} 
   \advance \grcalcb by \factor
   \advance \grcalcb by -\sfactor
   \put(\grcalca,\grcalcb) {\line(-1,0){\factor}} 
   \grcalcc = \factor
   \advance \grcalcc by -\sfactor
   \put(\grcalca,\grcalcb) {\line(0,-1){\grcalcc}} 
   \advance \grcolumn by 1}
 \newcommand{\grmpt}{    
   \grcalca = \grcolumn
   \multiply \grcalca by \factor
   \advance \grcalca by \hfactor
   \grcalcb = \grrow
   \multiply \grcalcb by \factor
   \put(\grcalca,\grcalcb) {\line(0,-1){\dfactor}} 
   \advance \grcalca by \hfactor
   \advance \grcalca by -\dfactor
   \advance \grcalcb by -\dfactor
   \put(\grcalca,\grcalcb) {\line(-1,0){\factor}} 
   \advance \grcalcb by -\factor
   \advance \grcalcb by \sfactor
   \put(\grcalca,\grcalcb) {\line(-1,0){\factor}} 
   \grcalcc = \factor
   \advance \grcalcc by -\sfactor
   \put(\grcalca,\grcalcb) {\line(0,1){\grcalcc}} 
   \advance \grcolumn by 1}
 \newcommand{\grmpb}{    
   \grcalca = \grcolumn
   \multiply \grcalca by \factor
   \advance \grcalca by \hfactor
   \grcalcb = \grrow
   \multiply \grcalcb by \factor
   \advance \grcalcb by -\factor
   \put(\grcalca,\grcalcb) {\line(0,1){\dfactor}} 
   \advance \grcalca by \hfactor
   \advance \grcalca by -\dfactor
   \advance \grcalcb by \dfactor
   \put(\grcalca,\grcalcb) {\line(-1,0){\factor}} 
   \advance \grcalcb by \factor
   \advance \grcalcb by -\sfactor
   \put(\grcalca,\grcalcb) {\line(-1,0){\factor}} 
   \grcalcc = \factor
   \advance \grcalcc by -\sfactor
   \put(\grcalca,\grcalcb) {\line(0,-1){\grcalcc}} 
   \advance \grcolumn by 1}
 \newcommand{\grmp}{    
   \grcalca = \grcolumn
   \multiply \grcalca by \factor
   \advance \grcalca by \factor
   \advance \grcalca by -\dfactor
   \grcalcb = \grrow
   \multiply \grcalcb by \factor
   \advance \grcalcb by -\dfactor
   \put(\grcalca,\grcalcb) {\line(-1,0){\factor}} 
   \advance \grcalcb by -\factor
   \advance \grcalcb by \sfactor
   \put(\grcalca,\grcalcb) {\line(-1,0){\factor}} 
   \grcalcc = \factor
   \advance \grcalcc by -\sfactor
   \put(\grcalca,\grcalcb) {\line(0,1){\grcalcc}} 
   \advance \grcolumn by 1}
 \newcommand{\gwmuh}[3]{    
   \grcalca = \grcolumn
   \multiply \grcalca by \factor
   \grcalcb = #2
   \advance \grcalcb by #3
   \multiply \grcalcb by \qfactor
   \advance \grcalca by \grcalcb
   \grcalcb = \grrow
   \multiply \grcalcb by \factor
   \grcalcc = #3
   \advance \grcalcc by -#2
   \multiply \grcalcc by \hfactor
   \grcalcd = \factor
   \advance \grcalcd by \hfactor
   \put(\grcalca,\grcalcb){\oval(\grcalcc,\grcalcd)[b]}
   \grcalca = \grcolumn
   \multiply \grcalca by \factor
   \grcalcc = #1
   \multiply \grcalcc by \hfactor
   \advance \grcalca by \grcalcc
   \advance \grcalcb by -\hfactor
   \advance \grcalcb by -\qfactor
   \put(\grcalca,\grcalcb) {\line(0,-1){\qfactor}} 
   \advance \grcolumn by #1}
 \newcommand{\gwcmh}[3]{   
   \grcalca = \grcolumn
   \multiply \grcalca by \factor
   \grcalcb = #2
   \advance \grcalcb by #3
   \multiply \grcalcb by \qfactor
   \advance \grcalca by \grcalcb
   \grcalcb = \grrow
   \advance \grcalcb by -1
   \multiply \grcalcb by \factor
   \grcalcc = #3
   \advance \grcalcc by -#2
   \multiply \grcalcc by \hfactor
   \grcalcd = \factor
   \advance \grcalcd by \hfactor
   \put(\grcalca,\grcalcb){\oval(\grcalcc,\grcalcd)[t]}
   \grcalca = \grcolumn
   \multiply \grcalca by \factor
   \grcalcc = #1
   \multiply \grcalcc by \hfactor
   \advance \grcalca by \grcalcc
   \advance \grcalcb by \factor
   \put(\grcalca,\grcalcb) {\line(0,-1){\qfactor}} 
   \advance \grcolumn by #1}
 \newcommand{\gsbox}[1]{
   \grcalca = \grcolumn
   \multiply \grcalca by \factor
   \grcalcb = \grrow
   \multiply \grcalcb by \factor
   \advance \grcalcb by -\factor
   \grcalcc = #1
   \multiply \grcalcc by \factor
   \grcalcd = \factor
   \put(\grcalca,\grcalcb){\framebox(\grcalcc,\grcalcd){}}}
\begin{document}
\title[Quasi-Hopf algebras of dimension 6]
{Quasi-Hopf algebras of dimension $6$}
\author{D. Bulacu}
\address{Faculty of Mathematics and Informatics, University
of Bucharest, Str. Academiei 14, RO-010014 Bucharest 1, Romania}
\email{daniel.bulacu@fmi.unibuc.ro}
\author{M. Misurati}
\address{University of Ferrara, Department of Mathematics, Via Machiavelli
35, Ferrara, I-44121, Italy}
\email{matteo.misurati@unife.it}
\thanks{
The first author was supported by the grant PCE47-2022 of UEFISCDI,
project PN-III-P4-PCE-2021-0282; he also thanks the University of Ferrara (Italy) for its warm hospitality.} 

\begin{abstract}
We complete the classification of the $6$-dimensional quasi-Hopf algebras, by proving that any such algebra is semisimple. As byproducts, we provide 
examples of $6$-dimensional quasi-bialgebras that are not semisimple as algebras, as well as the concrete quasi-Hopf structures of the 
$6$-dimensional semisimple quasi-Hopf algebras previously classified by Etingof and Gelaki in terms of their category of representations. In total there are $15$ quasi-Hopf algebras in dimension $6$ which are not pairwise twist equivalent.   
\end{abstract}
\maketitle
\section*{Introduction}\selabel{intro}
\setcounter{equation}{0}
The classification of quasi-Hopf algebras is in an early age, with most advancements focusing on the semisimple or basic case. 
On the one hand, this is own to the fact that the semisimple quasi-Hopf algebras characterize fusion categories with integer 
Frobenius-Perron dimensions of simple objects. Thus the classification of such fusion categories provides us for free the classification 
of the semisimple quasi-Hopf algebras and vice versa. For instance, this is the case in dimension $p$ and $pq$, see \cite[Corollary 8.31]{eno} 
and \cite[Theorem 6.3]{ego}, respectively, provided that $p<q$ are prime positive integers. More generally, owing to \cite[Proposition 2.6]{eo}, 
quasi-Hopf algebras are given, through the Tannaka-Krein reconstruction procedure, by the finite tensor categories 
with integer Frobenius-Perron dimensions of objects. In general, the problem of classifying fusion categories or finite tensor categories is an  
extremely difficult task, and so the same is the classification of the quasi-Hopf algebras in a given dimension. 
On the other hand, parallel to the pointed case in Hopf algebra theory and motivated by the categorical results mentioned above, there is a high interest also in 
the classification of the so called basic quasi-Hopf algebras (quasi-Hopf algebras with all irreducible representations $1$-dimensional). 
The first step in this direction was made by Etingof and Gelaky in \cite{eg3}, where are classified the finite dimensional quasi-Hopf algebras (over a field 
of characteristic zero and algebraically closed) which have two 1-dimensional irreducible representations; we must note that in \cite{eg3}, as a byproduct, 
the authors also obtain the classification of the $4$-dimensional quasi-Hopf algebras. Last but not least, a 
class of basic quasi-Hopf algebras of dimension $n^3$ is constructed by Gelaki in \cite{g5}, and \cite{eg4} uncovers    
the classification of the radically graded finite dimensional quasi-Hopf algebra with Jacobson radical of prime codimension. 
 
The aim of this paper is to complete the classification of the $6$-dimensional quasi-Hopf algebras. In the Hopf algebra case, in dimension $6$ the 
classification of semisimple Hopf algebras was given by Masuoka in \cite{akira}. Afterwards, \c Stefan \cite{dspq} showed that any Hopf algebra of dimension $6$ 
is semisimple, and thus finished the classification in dimension $6$. For quasi-Hopf algebras, owing to the classifications we have for fusion 
categories with integer Frobenius-Perron dimensions of simple objects, we know the classification of the semisimple quasi-Hopf algebras in dimension $6$. 
So, as \c Stefan did in the Hopf case, we complete the classification of quasi-Hopf algebras in dimension $6$ by proving that any such algebra is semisimple. 
To this end, we have to follow a path different from the one used in the Hopf case, because for a quasi-Hopf algebra $H$ we cannot consider the coradical filtration 
($H$ is not a coassociative coalgebra) and so there is no chance of producing a Taft-Wilson theorem for quasi-Hopf algebras that generalizes the one for 
Hopf algebras; see \cite[Theorem 7.7.7]{DNRha} or \cite[Theorem 5.4.1]{mont}. 
Fortunately, an equally useful tool that can be used in the quasi-Hopf setting is the quasi-Hopf algebra filtration on $H$ defined by the powers of the 
radical Jacobson of $H$; note that, in the Hopf case, the radical and coradical filtration are related as in \cite[Proposition 5.2.9]{mont}. 
The algebra associated to this filtration, denoted by ${\rm gr}(H)$, is a graded quasi-Hopf algebra with projection which possesses 
enough properties to be able to classify basic quasi-Hopf algebras, at least in small dimensions. For instance, in our case, by using biproducts in the sense 
of \cite{bn} and making use of arguments similar to those used by Masuoka in \cite{akira}, ${\rm gr}(H)$ helps us to show that any quasi-Hopf algebra of dimension $6$ 
is semisimple. Consequently, we complete in this way the classification in dimension $6$ both for quasi-Hopf algebras and finite tensor categories 
with integer Frobenius-Perron dimensions of objects.    
 
The paper is structured as follows. In \seref{prelim} we recall the basic things related to a quasi-Hopf algebra; for more information about this topic 
we invite the reader to consult \cite{bcpvo, kas}. For $H$ a $6$-dimensional basic non-semisimple quasi-Hopf algebra, $A:={\rm gr}(H)$ might be a biproduct 
quasi-Hopf algebra between a braided Hopf of dimension $2$ or $3$ and a group quasi-Hopf algebra (with reassociator determined by a $3$-cocycle of the group). 
In dimension $2$ these braided Hopf algebras were classified in \cite{bm}. In Section \ref{3dH} we classify those of dimension $3$ within the category of Yetter-Drinfeld modules over $k[C_2]$, viewed as a quasi-Hopf algebra via the unique non-trivial $3$-cocycle of $C_2$; in general, we denote by $C_n$ the cyclic group of order $n$. Since in both cases the biproduct quasi-Hopf algebras that we obtain are semisimple quasi-Hopf algebras, we conclude in \seref{6dmqHas} that there 
are no non-semisimple quasi-Hopf algebras of dimension $6$. We should stress the fact that our classification results from Section \ref{3dH} 
give rise to a $3$ non-isomorphic quasi-bialgebras of dimension $6$ that are not semisimple (and so they are not quasi-Hopf algebras). Finally, in \thref{fulldes6dimqHas} we give the explicit structures of the quasi-Hopf algebras of dimension $6$ which are not pairwise twist equivalent; the classification result 
in \thref{fulldes6dimqHas} is mostly obtained from \cite[Theorem 6.3]{ego}.   

\section{Preliminaries}\selabel{prelim}
\setcounter{equation}{0}
\setcounter{equation}{0}
\subsection{Quasi-bialgebras and quasi-Hopf algebras.}
Throughout this paper, $k$ is a field of characteristic zero and algebraically closed. 
All algebras, linear spaces, etc. will be over $k$; unadorned $\ot $ means $\ot_k$.
Following Drinfeld \cite{dri}, a quasi-bialgebra is
a quadruple $(H, \Delta , \va , \Phi )$ where $H$ is
an associative algebra with unit $1$, 
$\Phi$ is an invertible element in $H\ot H\ot H$, and
$\Delta :\ H\ra H\ot H$ and $\va :\ H\ra k$ are algebra
homomorphisms obeying the identities
\begin{eqnarray}
&&(\Id_H \ot \Delta )(\Delta (h))=
\Phi (\Delta \ot \Id_H)(\Delta (h))\Phi ^{-1},\eqlabel{q1}\\
&&(\Id_H \ot \va )(\Delta (h))=h~~,~~
(\va \ot \Id_H)(\Delta (h))=h,\eqlabel{q2}
\end{eqnarray}
for all $h\in H$, where
$\Phi$ is a normalized $3$-cocycle, in the sense that
\begin{eqnarray}
&&(1\ot \Phi)(\Id_H\ot \Delta \ot \Id_H)
(\Phi)(\Phi \ot 1)\nonumber\\
&&\hspace*{1.5cm}
=(\Id_H\ot \Id_H \ot \Delta )(\Phi )
(\Delta \ot \Id_H \ot \Id_H)(\Phi),\eqlabel{q3}\\
&&(\Id \ot \va \ot \Id_H)(\Phi)=1\ot 1.\eqlabel{q4}
\end{eqnarray}
The map $\Delta$ is called the coproduct or the
comultiplication, $\va $ is the counit, and $\Phi $ is the
reassociator. As for Hopf algebras, we denote $\Delta (h)=h_1\ot h_2$,
but since $\Delta $ is only quasi-coassociative we adopt the
further convention (summation understood):
\[
(\Delta \ot \Id_H)(\Delta (h))=h_{11}\ot h_{12}\ot h_2~~,~~
(\Id_H\ot \Delta)(\Delta (h))=h_1\ot h_{21}\ot h_{22},
\]
for all $h\in H$. We will denote the tensor components of $\Phi$
by capital letters, and the ones of $\Phi^{-1}$ by lower case letters, namely
\begin{eqnarray*}
&&\Phi=X^1\ot X^2\ot X^3=T^1\ot T^2\ot T^3=
V^1\ot V^2\ot V^3=\cdots\\
&&\Phi^{-1}=x^1\ot x^2\ot x^3=t^1\ot t^2\ot t^3=
v^1\ot v^2\ot v^3=\cdots
\end{eqnarray*}
$H$ is called a quasi-Hopf
algebra if, moreover, there exists an
anti-morphism $S$ of the algebra
$H$ and elements $\a , \b \in
H$ such that, for all $h\in H$, we
have:
\begin{eqnarray}
&&
S(h_1)\a h_2=\va(h)\a
~~{\rm and}~~
h_1\b S(h_2)=\va (h)\b,\eqlabel{q5}\\ 
&&X^1\b S(X^2)\a X^3=1
~~{\rm and}~~
S(x^1)\a x^2\b S(x^3)=1.\eqlabel{q6}
\end{eqnarray}

Our definition of a quasi-Hopf algebra is different from the
one given by Drinfeld \cite{dri} in the sense that we do not
require the antipode to be bijective. In the case where $H$ is finite dimensional
or quasi-triangular, bijectivity of the antipode follows from the other axioms,
see \cite{bc1} and \cite{bn3}, so the two definitions are equivalent. Anyway, the bijectivity 
of the antipode $S$ will be implicitly understood in the case when $S^{-1}$, the inverse 
of $S$, appears is formulas or computations.

Let $H$ be a quasi-bialgebra and $F\in H\otimes H$ an invertible element such that $\va(F^1)F^2=\va(F^2)F^1=1$, where 
$F=F^1\ot F^2$ is the formal notation for the tensor components of $F$; a similar notation we adopt for $F^{-1}$, $F^{-1}=G^1\ot G^2$. Note that 
$F$ is called a twist or gauge transformation for $H$. Let 
\begin{eqnarray*}
&&\Delta_F: H\ra H\ot H,~~\Delta_F(h)=F\Delta(h)F^{-1},\\
&&\Phi_F=(1_H\ot F)(\Id_H\ot \Delta)(F)\Phi (\Delta \ot \Id_H)(F^{-1})(F^{-1}\ot 1_H),
\end{eqnarray*}
Then $H_F:=(H, \Delta_F, \va , \Phi_F)$ is a quasi-bialgebra as well. Furthermore, if $H$ is a quasi-Hopf algebra then so is 
$H_F$ with $S_F=S$, $\a_F=S(G^1)\a G^2$ and $\b_F=F^1\b S(F^2)$.  

Two quasi-bialgebras (resp. quasi-Hopf algebras) $H$ and $H'$ are 
called twist equivalent if there exists a twist $F\in H'\ot H'$ such that $H$ and $H'_F$ are isomorphic 
as quasi-bialgebras (resp. quasi-Hopf algebras).

\subsection{Yetter-Drinfeld module categories and braided Hopf algebras}
The category of left Yetter-Drinfeld modules over a quasi-bialgebra $H$, denoted by ${}_H^H{\cal YD}$, was introduced by Majid in \cite{m1}. 
The objects of ${}_H^H{\cal YD}$ are left $H$-modules $M$ on which $H$ coacts from the left (we denote 
by $\lambda_M:\ M\to H\ot M,~~\lambda_M(m)=m_{[-1]}\ot m_{[0]}$ the left $H$-coaction on $M$) such that $\va(m_{[-1]})m_{[0]}=m$ and, 
for all $m\in M$,  
\begin{eqnarray}
&&\hspace*{-1.5cm}
X^1m_{[-1]}\ot (X^2\cd m_{[0]})_{[-1]}X^3
\ot (X^2\cd m_{[0]})_{[0]}\nonumber\\
&&\hspace*{1cm}=X^1(Y^1\cd m)_{[-1]_1}Y^2\ot X^2(Y^1\cd m)_{[-1]_2}Y^3
\ot X^3\cd (Y^1\cd m)_{[0]}.\label{y1}
\end{eqnarray}
It is compatible with the left $H$-module structure on $M$, in the sense that   
\begin{equation}\label{y3}
h_1m_{[-1]}\ot h_2\cd m_{[0]}=(h_1\cd m)_{[-1]}h_2\ot (h_1\cd m)_{[0]},~\forall~h\in H,~m\in M.
\end{equation}
When $H$ is a quasi-Hopf algebra with bijective antipode, ${}_H^H{\cal YD}$ is braided. 
The monoidal structure on ${}_H^H{\cal YD}$ is such that the forgetful functor ${}_H^H{\cal YD}\ra {}_H{\cal M}$ is strong monoidal. 
The coaction on the tensor product $M\ot N$ of two Yetter-Drinfeld modules $M$, $N$ is given, for all $m\in M$ and $n\in N$, by 
\begin{equation}\eqlabel{y4}
m\ot n\mapsto X^1(x^1Y^1\cdot m)_{[-1]} x^2(Y^2\cd n)_{[-1]}Y^3
\ot X^2\cd (x^1Y^1\cdot m)_{[0]}\ot X^3x^3\cdot (Y^2\cdot n)_{[0]}.
\end{equation}

In what follows, we call an algebra, coalgebra etc. in ${}_H^H{\cal YD}$ a Yetter-Drinfeld algebra, coalgebra etc. ($YD$-algebra, coalgebra etc. for short). 
Record that a YD-algebra is a Yetter-Drinfeld module $B$ equipped with a unital multiplication $\un{m}_B: B\ot B\ni b\ot b'\mapsto bb'\in B$, $1_B\in B$ is our   
notation for the unit of $B$, such that 
\begin{eqnarray}
&&h\cdot (\mf{b}\mf{b}')=(h_1\cdot \mf{b})(h_2\cdot \mf{b}')~,~h\cdot 1_B=\va(h)1_B~ ,~  {1_B}_{[-1]}\ot {1_B}_{[0]}=1_H\ot 1_B   \label{modalg1}\\
&&(\mf{b}\mf{b}')\mf{b}{''}=(X^1\cdot \mf{b})[(X^2\cdot \mf{b}')(X^3\cdot \mf{b}{''})],\label{modalg2}\\
&&(\mf{b}\mf{b}')_{[-1]}\ot (\mf{b}\mf{b}')_{[0]}\nonumber\\
&&\hspace{2cm}
=X^1(x^1Y^1\cdot \mf{b})_{[-1]} x^2(Y^2\cd \mf{b}')_{[-1]}Y^3
\ot (X^2\cd (x^1Y^1\cdot \mf{b})_{[0]})(X^3x^3\cdot (Y^2\cdot \mf{b}')_{[0]}),\label{modalg3}
\end{eqnarray}
for all $h\in H$ and $\mf{b}, \mf{b}', \mf{b}{''}\in B$. Note that (\ref{modalg1}) expresses the $H$-linearity of the multiplication and unit morphisms of $B$, 
(\ref{modalg2}) is just the associativity of $\un{m}_B$ in ${}_H{\cal M}$ and ${}_H^H{\cal YD}$ and (\ref{modalg3}) is the $H$-colinearity of $\un{m}_B$ 
in ${}_H^H{\cal YD}$; see \equref{y4}. Also, (\ref{modalg1}, \ref{modalg2}) are the required condition on $B$ to be an algebra in ${}_H{\cal M}$ or, equivalently, 
an $H$-module algebra.  

Likewise, a YD-coalgebra is a Yetter-Drinfeld module $B$ endowed with a comultiplication 
$\un{\Delta}_B: B\ni b\mapsto b_{\un{1}}\ot b_{\un{2}}\in B\ot B$ and counit $\un{\va}_B: B\ra k$ such that, for all $b\in B$,
\begin{eqnarray}
&&\un{\Delta}_B(h\cdot \mf{b})=h_1\cdot \mf{b}_{\un{1}}\ot h_2\cdot \mf{b}_{\un{2}},~~\un{\va}_B(h\cdot \mf{b})=\va(h)\un{\va}_B(\mf{b}),~~
\un{\va}_B(\mf{b}_{\un{1}})\mf{b}_{\un{2}}=\mf{b}=\un{\va}_B(\mf{b}_{\un{2}})\mf{b}_{\un{1}},\eqlabel{ydc1}\label{YDcoal con1}\\
&&X^1\cdot \mf{b}_{\un{1}\un{1}}\ot X^2\cdot \mf{b}_{\un{1}\un{2}}\ot X^3\cdot \mf{b}_{\un{2}}=
\mf{b}_{\un{1}}\ot \mf{b}_{\un{2}\un{1}}\ot \mf{b}_{\un{2}\un{2}},\eqlabel{ydc2}\\
&&\mf{b}_{[-1]}\ot \mf{b}_{[0]_{\un{1}}}\ot \mf{b}_{[0]_{\un{2}}}=X^1(x^1Y^1\cdot \mf{b}_{\un{1}})_{[-1]}x^2(Y^2\cdot \mf{b}_{\un{2}})_{[-1]}Y^3\nonumber\\
&&\hspace{2cm}\ot 
X^2\cdot (x^1Y^1\cdot \mf{b}_{\un{1}})_{[0]}\ot X^3x^3\cdot (Y^2\cdot \mf{b}_{\un{2}})_{[0]},~~
\un{\va}_B(\mf{b}_{[0]})\mf{b}_{[-1]}=\un{\va}_B(\mf{b})1.\eqlabel{ydc3}
\end{eqnarray} 
Observe that \equref{ydc1} expresses the fact that $\un{\Delta}_B$, $\un{\va}_B$ are left $H$-linear and $\un{\va}_B$ is counit for $\un{\Delta}_B$; otherwise stated, 
$B$ is a left $H$-module coalgebra, this means a coalgebra in ${}_H{\cal M}$. The relation \equref{ydc2} is the coassociativity of $\un{\Delta}_B$ in 
${}_H{\cal M}$ or, equivalently, in ${}_H^H{\cal YD}$ (since the forgetful functor from ${}_H^H{\cal YD}$ to ${}_H{\cal M}$ is strong monoidal), 
while \equref{ydc3} expresses the left $H$-colinearity of $\un{\Delta}_B$, $\un{\va}_B$ in ${}_H^H{\cal YD}$; see \equref{y4}. Also, we denoted 
$(\un{\Delta}_B\ot \Id_B)(\un{\Delta}_B(\mf{b})):=\mf{b}_{\un{1}\un{1}}\ot \mf{b}_{\un{1}\un{2}}\ot \mf{b}_{\un{2}}$ and 
$(\Id_B\ot \un{\Delta}_B)(\un{\Delta}_B(\mf{b})):=\mf{b}_{\un{1}}\ot \mf{b}_{\un{2}\un{1}}\ot \mf{b}_{\un{2}\un{2}}$. 

A bialgebra in ${}_H^H{\cal YD}$ is a YD-algebra $B$ that is at the same time a YD-coalgebra such that the comultiplication $\un{\Delta}_B$ and the counit 
$\un{\va}_B$ are algebra morphisms in ${}_H^H{\cal YD}$; $B\ot B$ is considered as an algebra in ${}_H^H{\cal YD}$ via the tensor product algebra structure. 
Explicitly, for all $b, b'\in B$, 
\begin{eqnarray}
&&\un{\Delta}_B(1_{B})=1_B\ot 1_B~,~\un{\va}_B(1_B)=1_k~,~\un{\va}_B(\mf{b}\mf{b}')=\un{\va}_B(\mf{b})\un{\va}_B(\mf{b}'),\label{moltcon1}\\
&&\un{\Delta}_B(\mf{b}\mf{b}')=(y^1X^1\hspace{-1mm}\cdot \mf{b}_{\un{1}})(y^2Y^1(x^1X^2\hspace{-1mm}\cdot \mf{b}_{\un{2}})_{[-1]}x^2X_1^3\hspace{-1mm}\cdot 
\mf{b}'_{\un{1}})\ot (y_1^3Y^2\cdot (x^1X^2\hspace{-1mm}\cdot \mf{b}_{\un{2}})_{[0]})(y_2^3Y^3x^3X_2^3\hspace{-1mm}\cdot \mf{b}'_{\un{2}}).\label{moltcon2}
\end{eqnarray}

A Hopf algebra in ${}_H^H{\cal YD}$ is a YD-bialgebra $B$ for which there exists a morphism $\un{S}_B: B\ra B$ in ${}_H^H{\cal YD}$, called antipode, such that 
$\un{S}_B(b_{\un{1}})b_{\un{2}}=\un{\va}_B(b)1_B=b_{\un{1}}\un{S}(b_{\un{2}})$, for all $b\in B$.
\subsection{Biproduct quasi-Hopf algebras}
Owing to \cite{db, panfred}, to a YD-algebra $B$ one can associate a $k$-algebra $B\# H$, called the smash product of $B$ and $H$ (it is suffices for  
$B$ to be an $H$-module algebra only). As a vector space $B\# H$ equals $B\ot H$, and its multiplication is defined by 
\begin{equation}\eqlabel{smashp}
    (\mf{b}\# h)(\mf{b}'\# h')=(x^1\cdot \mf{b})(x^2h_1\cdot \mf{b}')\# x^3h_2h',
\end{equation}
for all $\mf{b}, \mf{b}'\in B$ and $h, h'\in H$; the above multiplication is unital with unit $1_B\times 1$.

Similarly, owing to \cite{db}, to a $YD$-coalgebra $B$ one can associate an $H$-bimodule coalgebra; this means a coalgebra within the category of $H$-bimodules 
${}_H{\cal M}_H$, endowed with the monoidal structure coming from its identification to the category of left representations over 
the tensor product quasi-Hopf algebra $H\ot H^{\rm op}$ ($H^{\rm op}$ is the opposite quasi-Hopf algebra associated to $H$). This coalgebra 
structure, denoted by $B\tie H$ and called the smash product coalgebra of $B$ and $H$, is built on the $k$-vector space $B\ot H$ as follows 
(we write $\mf{b}\tie h$ in place of $\mf{b}\ot h$ to distinguish this coalgebra structure of $B\ot H$):
\begin{equation}\eqlabel{comultsmashprodcoalg}
\un{\Delta}(\mf{b}\tie h)=y^1X^1\cdot \mf{b}_{\un{1}}\tie y^2Y^1(x^1X^2\cdot \mf{b}_{\un{2}})_{[-1]}x^2X^3_1h_1\ot 
y^3_1Y^2\cdot (x^1X^2\cdot \mf{b}_{\un{2}})_{[0]}\tie y^3_2Y^3x^3X^3_2h_2
\end{equation}
and $\un{\va}(\mf{b}\tie h)=\un{\va}_B(\mf{b})\va(h)$, for all $\mf{b}\in B$, $h\in H$.

When $B$ is a YD-bialgebra, $B\# H$ and $B\tie H$ determine a quasi-bialgebra structure on $B\ot H$ with reassociator $1_B\ot X^1\ot 1_B\ot X^2\ot 1_B\ot X^3$,  
denoted in what follows by $B\times H$ and called the biproduct quasi-bialgebra of $B$ and $H$. Furthermore, 
$B\times H$ is a quasi-Hopf algebra with antipode 
\begin{equation}\eqlabel{antipbipr}
{\cal S}(\mf{b}\times h)=(1_B\times S(X^1x^1_1\mf{b}_{[-1]}h)\a)(X^2x^1_2\cdot \un{S}_B(\mf{b}_{[0]})\times X^3x^2\b S(x^3))
\end{equation}
and distinguished elements $1_B\times \a$ and $1_B\times \b$, provided that $B$ is a YD-Hopf algebra with antipode $\un{S}_B$; we refer to \cite{bn} for more details.   

According to \cite{db, bn}, biproduct quasi-Hopf algebras characterize the quasi-Hopf algebras with a projection: if there exist quasi-Hopf algebra morphisms 
$
\xymatrix{
H \ar[r]<2pt>^i &\ar[l]<2pt>^{\pi} A
}
$
such that $\pi i=\Id_H$ then there exists a Hopf algebra $B$ in ${}_H^H{\cal YD}$ such that $A$ is isomorphic to $B\times H$ as a quasi-Hopf algebra. 

\section{Free biproduct (quasi-)Hopf algebras of rank $3$ over $C_2$}\label{3dH}
\setcounter{equation}{0}
Recall that we are working over a field $k$ which is algebraically closed and of characteristic $0$. 
For $n$ a non-zero natural number, denote by $C_n$ the cyclic group of order $n$; unless otherwise stated, $g$ is a generator for $C_n$.   

Let $\htw$ be the group Hopf algebra $k[C_2]$, seen as a quasi-Hopf algebra with reassociator $\Phi_2=1\ot 1\ot 1 -2 p_-\ot p_-\ot p_- $, where 
$p_\pm=\frac{1}{2}(1\pm g)$. The antipode is the identity map and the distinguished elements are $\a=g$ and $\b=1$; in what follows, we denote this 
quasi-Hopf structure on $k[C_2]$ by $H(2)$.  
As a preliminary step in proving our main result, we describe all the $6$-dimensional quasi-Hopf algebras with a projection onto $H(2)$, 
by classifying braided Hopf algebras of dimension $3$ in ${}_\htw^\htw{\cal YD}$. The other case, namely of quasi-Hopf algebras $A$ with a projection that 
covers the embedding of a $3$-dimensional quasi-Hopf subalgebra of $A$, was considered in \cite{bm}.

It is well known that, up to isomorphism, the $6$-dimensional Hopf algebras are: $k[C_6]$, the group Hopf algebra of the cyclic group 
of order $6$; $k[S_3]$, the  group Hopf algebra of the symmetric group with 3 letters; the dual Hopf algebra of $k[S_3]$, denoted by $k^{S_3}$ (see e.g. \cite{ds}). 
As we will see, this classification yields for free the classification of the $3$-dimensional Hopf algebras  in ${}_{k[C_2]}^{k[C_2]}{\cal YD}$. 
Once more, we refer to \cite{db, bn} for the one to one correspondence between braided Hopf algebras in ${}_H^H{\cal YD}$ and quasi-Hopf algebra 
projections $
\xymatrix{
H \ar[r]<2pt>^i &\ar[l]<2pt>^{\pi} A
}
$
satisfying $\pi i=\Id_H$.  

\begin{proposition}\label{3dYD}
    Let $H=k[C_2]$. If $B$ is a $3$-dimensional braided Hopf algebra in ${}_{H}^{H}{\cal YD}$, then $B$ is isomorphic to one of the following braided Hopf algebras:
    
    $\bullet$ $B_{C_6}$, the group Hopf algebra $ k[C_3] $ viewed as an object of ${}_{H}^{H}{\cal YD}$ via trivial $H$-action and $H$-coaction.

    $\bullet$ $B_{S_3}$, the group Hopf algebra $ k[C_3] $ considered in ${}_{H}^{H}{\cal YD}$ via the trivial $H$-coaction and $H$-action defined by 
		$g\cd x=x^2$, where $x$ stands for a generator of $C_3$.
    
    $\bullet$ $B_{*}$, the group Hopf algebra $k[C_3]$ regarded as an object of ${}_{H}^{H}{\cal YD}$ via the trivial $H$-action and $H$-coaction 
		determined by 
    \begin{equation}
       \l(x^i)=p_+\ot x^i + p_-\ot x^{(2i)'},
    \end{equation}
    where $x$ is the generator of $C_3$ and $(2i)'$ is the remainder of the division of $2i$ by $3$, $i\in\{0, 1, 2\}$.

    Moreover, via the biproduct construction, to $B_{C_6}$, $B_{S_3}$ and $B_{*}$ correspond $k[C_6]$, $k[S_3]$ and $k^{S_3}$, respectively.
\end{proposition}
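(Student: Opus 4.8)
The plan is to pass to the biproduct and exploit the known list of $6$-dimensional Hopf algebras. Since $H=k[C_2]$ carries the trivial reassociator, ${}_H^H{\cal YD}$ is the ordinary Yetter--Drinfeld category and, for a braided Hopf algebra $B$ in it, the biproduct $A:=B\times H$ of \seref{prelim} is an \emph{ordinary} Hopf algebra of dimension $6$ together with Hopf algebra maps $i\colon H\to A$, $\pi\colon A\to H$ satisfying $\pi i=\Id_H$. Conversely, by the correspondence recalled at the end of \seref{prelim} \cite{db,bn}, $B$ is recovered up to isomorphism from the pair $(A,\pi)$ as the algebra of coinvariants $B\cong A^{{\rm co}H}$, with its induced Yetter--Drinfeld and braided Hopf structure; hence isomorphism classes of $3$-dimensional braided Hopf algebras correspond to isomorphism classes of triples $(A,i,\pi)$. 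By the classification of $6$-dimensional Hopf algebras (\cite{akira}; \c Stefan \cite{dspq}; see also \cite{ds}), $A$ is one of $k[C_6]$, $k[S_3]$, $k^{S_3}$, all of which are semisimple.

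It is therefore enough to determine, for each of these three Hopf algebras, the split Hopf projections onto $k[C_2]$ and the resulting coinvariants. As orientation, a radical argument already shows the answer is always $k[C_3]$: the Jacobson radical $J$ of $B$ is stable under the $H$-action (each $g\cd(-)$ is an algebra automorphism of $B$ by \eqref{modalg1}), so $J\#H$ is a nilpotent ideal of the smash product $A=B\#H$ \equref{smashp}; since $A$ is semisimple this forces $J=0$, whence $B\cong k\times k\times k$ is, as an algebra, the group algebra $k[C_3]$. Under the bijection the two pieces of Yetter--Drinfeld data acquire a transparent meaning: the $H$-action on $B=A^{{\rm co}H}$ is conjugation by $i(g)$, and the $H$-coaction is the $C_2$-grading induced by $\pi$. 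Both are governed by $\mathrm{Aut}(C_3)\cong C_2$, so a priori there are four combinations of (trivial or inversion) action and (trivial or nontrivial) coaction; the content of the proposition is that exactly three are realized.

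I would now run the three cases. For $A=k[C_6]=k[C_3]\ot k[C_2]$ the only split surjection onto $k[C_2]$ is the projection onto the second tensorand, and the coinvariants are $k[C_3]$ with trivial action (as $A$ is commutative) and trivial coaction, i.e. $B_{C_6}$. For $A=k[S_3]$ the split projections come from the sign homomorphism $S_3\to C_2$ with a transposition as section; all transpositions are conjugate, so these triples are isomorphic, and $A^{{\rm co}H}=k[\ker\pi]\cong k[C_3]$ on which $i(g)$ acts by conjugation, namely $g\cd x=x^{-1}=x^2$, with trivial coaction, i.e. $B_{S_3}$. For $A=k^{S_3}$ I would dualize: a split projection $k^{S_3}\to k^{C_2}\cong k[C_2]$ corresponds to a split injection $k[C_2]\hookrightarrow k[S_3]$, again given by a transposition together with the sign retraction, so once more there is a single class; here the induced action is trivial while the coaction is the nontrivial grading, and an explicit computation of $(k^{S_3})^{{\rm co}H}$ via \equref{comultsmashprodcoalg} recovers $k[C_3]$ with $\l(x^i)=p_+\ot x^i+p_-\ot x^{(2i)'}$, i.e. $B_{*}$. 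Since these exhaust all split projections of the three Hopf algebras, the fourth a priori combination (inversion action together with nontrivial coaction) is never realized; equivalently, one checks directly that on that Yetter--Drinfeld module the element $x-x^2$ spans a summand of self-braiding $-1$, and that the braided compatibility \eqref{moltcon2} between $\un{\Delta}$ and the multiplication then fails. This explicit identification of the coinvariants of $k^{S_3}$ with their coaction, together with the compatibility check, is the step I expect to be the main obstacle.

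Finally, the ``Moreover'' is obtained by running the correspondence in the other direction, forming the biproducts of the structures just found. For $B_{C_6}$ and $B_{S_3}$ the coaction is trivial, the braiding is the plain flip, and \equref{smashp} gives the ordinary smash products $k[C_3]\ot k[C_2]=k[C_6]$ and $k[C_3]\#k[C_2]=k[C_3\rtimes C_2]=k[S_3]$; for $B_{*}$ the action is trivial and the biproduct coalgebra \equref{comultsmashprodcoalg} produces a noncocommutative $6$-dimensional Hopf algebra, which by dimension and the classification must be $k^{S_3}$. This yields the stated correspondence and completes the proof.
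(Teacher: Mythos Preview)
Your proposal is correct and follows essentially the same route as the paper: pass to the biproduct, invoke the classification of $6$-dimensional Hopf algebras, and for each of $k[C_6]$, $k[S_3]$, $k^{S_3}$ identify the coinvariants along a split projection onto $k[C_2]$. The paper carries out the $k^{S_3}$ computation you flag as the main obstacle in full detail (working in the dual basis $\{P_{ij}\}$ and then changing basis to $x=e_0+\omega e_1+\omega^2 e_2$), whereas your additions---the radical argument forcing $B\cong k[C_3]$ as an algebra, the explicit uniqueness of projections via conjugacy of transpositions, and the direct exclusion of the ``fourth combination''---are not in the paper but are useful supplementary observations rather than a different strategy.
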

\begin{proof}
The group isomorphism $C_6\simeq C_3\times C_2$ produces a Hopf algebra isomorphism 
$K[C_6]\simeq k[C_3\times C_2]\simeq k[C_3]\ot k[C_2]$, where $k[C_3]\ot k[C_2]$ has the tensor product Hopf algebra structure. It follows form here 
that $k[C_6]$ can be realized as the biproduct Hopf algebra between $B_{C_6}$ and $k[C_2]$.

The symmetric group $S_3$ identifies to the group semidirect product $C_3\tie C_2$. Actually, if $\sigma\in S_3$ is a cycle of length $3$ and $\tau\in S_3$ is a 
transposition, $C_3=\le \sigma\ri$, $C_2=\le \tau\ri$ and $\tau\sigma=\sigma^{-1}\tau$. In Hopf algebra language, the inclusion $i: k[C_2]\ra k[S_3]$ 
is a Hopf algebra morphism and $\pi: k[S_3]\ra k[C_2]$ given by $\pi(\sigma^i\tau^j)=\tau^j$, $0\leq i\leq 2$ and $0\leq j\leq 1$, is a Hopf algebra 
morphism that covers $i$. Thus, $k[S_3]$ is a biproduct Hopf algebra between a braided Hopf algebra $B$ within ${}_{H}^{H}{\cal YD}$ and $k[C_2]$. 
As $B={\rm Im}(E)$, with $E: k[S_3]\ra k[S_3]$ given by $E(\sigma^i\tau^j)=\sigma^i\tau^jS(\pi(\sigma^i\tau^j))=\sigma^i$, for all 
$0\leq i\leq 2$ and $0\leq j\leq 1$, we obtain that $B=k[C_3]$ as a $k$-vector space, and an object of ${}_{H}^{H}{\cal YD}$ with structure determined by 
$\tau\tr \sigma^i=\tau\sigma^i\tau=\sigma^{-i}=\sigma^{(2i)'}$ and $\sigma^i\mapsto \pi(\sigma^i)\ot \sigma^i=1\ot \sigma^i$, for all $0\leq i\leq 2$. The structure of $B$ as a Hopf algebra in ${}_{H}^{H}{\cal YD}$ equals the structure of the group Hopf algebra $k[C_3]$, and therefore $B=B_{S_3}$.  
   
With notation as above, we have a Hopf algebra morphism $i^*: k^{S_3}\ra k[C_2]^*\equiv k[C_2]$ that covers the Hopf algebra inclusion 
$\pi^*: k[C_2]\equiv k[C_2]^*\ra k^{S_3}$; the Hopf algebra identification $k[C_2]^*\equiv k[C_2]$ is given by $1\mapsto P_1 + P_{\tau}$ and 
$\tau\mapsto P_1-P_{\tau}$, where $\{P_1, P_{\tau}\}$ is the basis of $k[C_2]^*$ dual to the basis $\{1, \tau\}$ of $k[C_2]$. Note that, the inverse of 
this correspondence is determined by $P_1\mapsto p_+$ and $P_\tau\mapsto p_-$. So, as in the previous case, $k^{S_3}$ can be characterized as a 
biproduct Hopf algebra between a braided Hopf algebra $B'$ in ${}_{H}^{H}{\cal YD}$ and $k[C_2]$. Explicitly, if $\{P_{ij}\}$ is the basis of 
$k^{S_3}$ dual to the basis $\{\sigma^i\tau^j\}$ of $k[S_3]$, we have that $\pi^*: k[C_2]\ra k^{S_3}$ is given by 
$\pi^*(1)=\va$ and $\pi^*(\tau)=\sum_{i, j=0}^{2, 1}(-1)^jP_{ij}$, where $\va$ is the counit of $k[S_3]$ or, alternatively, the unit of $k^{S_3}$. 
Likewise, $i^*: k^{S_3}\ra k[C_2]$ is determined by $i^*(P_{i0})=\delta_{i, 0}p_+$ and $i^*(P_{i1})=\delta_{i, 0}p_-$, $0\leq i\leq 2$, where 
$\delta_{i, j}$ is the Kronecker's delta of $i$ and $j$. By using the definition of the projection $E: k^{S_3}\ra B'$, we find that 
$E(P_{i0})=P_{i0}+P_{i1}$ and $E(P_{i1})=0$, for all $0\leq i\leq 2$, fact which implies that $B'$ is spanned by $\{P_{i0}+P_{i1}\mid 0\leq i\leq 2\}$. 
Moreover, if we denote $e_i:=P_{i0}+P_{i1}$, $0\leq i\leq 2$, then $\{e_0, e_1, e_2\}$ is a basis of $B'$ defined by idempotents such that 
$e_0+e_1+e_2=1$. In addition, if we consider $C_3=\le \sigma\ri$, $e_i(\sigma^{l})=\delta_{i, l}$, for all $0\leq i, l\leq 2$, then 
$\{e_0, e_1, e_2\}$ can be seen as the dual basis of the basis $\{1, \sigma, \sigma^2\}$ of $k[C_3]$. More explicitly, the isomorphism 
between $B'$ and $k[C_3]^*\cong k[C_3]$ can be uncovered as follows.  

Let $\omega$ be a primitive root of unity of order $3$ and set $x:=e_0+\omega e_1+\omega^2 e_2$. We compute that $x^2:=e_0+\omega^2 e_1+ \omega e_2$ 
and $x^3=e_0+e_1+e_2=1$. Since $\{1, x, x^2\}$ is as well a basis of $B'$, we conclude that $B'\simeq k[C_3]=k[\le x\ri]$ as $k$ algebras. 
Furthermore, the braided coalgebra structure of $B'$ in ${}_{H}^{H}{\cal YD}$ is determined by 
\[
\un{\Delta}(e_i)=\sum\limits_{(u+v)'=i} e_u\ot e_v,~\forall~i\in\{0, 1, 2\},
\]   
where the sum is over $u, v\in \{0, 1, 2\}$ such that the remainder $(u+v)'$ of the division of $u+v$ by $3$ equals $i$. A simple inspection guarantees us 
that $\un{\Delta}(x)=x\ot x$, and therefore $B'\simeq k[C_3]$ as Hopf algebras. Finally, the $H$-action on $B'$ is trivial becasue $B'$ is a commutative 
algebra and $\tau$ has order $2$. Concerning the left $H$-coaction on $B'$, it is given by 
$e_i\mapsto (i^*\ot \Id_{B'})(\Delta(e_i))=p_+\ot e_i + p_-\ot e_{(2i)'}$, for all $0\leq i\leq 2$, where $\Delta$ is the comultiplication of $k^{S_3}$. 
In terms of $x$, we have $x^i\mapsto p_+\ot x^i + p_-\ot x^{(2i)'}$, for all $0\leq i\leq 2$. 

Hence, any $6$-dimensional Hopf algebra is a biproduct Hopf algebra, and so determined by a braided Hopf algebra in ${}_H^H{\cal YD}$. 
The classification result mentioned above say then that $B_{C_6}$, $B_{S_3}$ and $B_*$ are the only types of $3$-dimensional braided Hopf algebras in 
${}_H^H{\cal YD}$. 
\end{proof}

It is well-known that, for a finite dimensional Hopf algebra $H$, the category of left-right Yetter-Drinfeld modules over $H$, ${}_H{\cal YD}^H$, is isomorphic 
(even as a braided category) to the category of left modules over the quantum double $D(H)$ of $H$; more details can be found in \cite{kas}. As far as we are 
concerned, if $M\in {}_H{\cal YD}^H$ then $M$ is a left $D(H)$-module via the action 
$(h^*\bowtie h)\cdot m=h^*((h\cdot m)_{(1)})(h\cdot m)_{(0)}$, for all $h^*\in H^*$ and $h\in H$, where $\cdot$ is the left $H$-action on $M$ and 
$M\ni m\mapsto m_{(0)}\ot m_{(1)}\in M\ot H$ is our sigma notation for the right action of $H$ on $M$. Conversely, any left $D(H)$-module $M$ 
can be seen in ${}_H{\cal YD}^H$ via the following structure ($h\in H$ and $m\in M$),
\[
h\cdot m=(\va\bowtie h)m~~\mbox{and}~~m\mapsto \sum\limits_{i=1}^n(e^i\bowtie 1)m\ot e_i, 
\]
where $\{e^i, e_i\}$ are dual bases in $H^*$ and $H$, and the juxtaposition is the action of $D(H)$ on $M$.

On the other hand, by the Hopf version of \cite[Theorem 8.14]{bcpvo}, the categories ${}_H^H{\cal YD}$ and ${}_H{\cal YD}^H$ are monoidally isomorphic.  
The inverse functors that provide the isomorphism act as identity on modules and morphisms, and transport the left (resp. right) $H$-coaction into a right (resp. left) 
one with the help of the inverse of the antipode $S$ of $H$ (resp. with the help of $S$). Consequently, ${}_H^H{\cal YD}$ in (monoidally) isomorphic to the category of left $D(H)$-modules.  
 
For $H=k[C_2]$, one can check easily that $D(\htw)=k[C_2]^*\ot k[C_2]\cong k[C_2]\otimes k[C_2]\cong k[C_2\times C_2]\cong k[C_4]$ as a Hopf algebra. Thus, in this particular case we have four non-isomorphic simple $D(H)$-modules, and therefore four non-isomorphic simple left Yetter-Drinfeld modules over $H$ which will be denoted in what follows by $M^i_j$, $i, j\in \{0, 1\}$. As the $M_i^j$'s are all one dimensional, we consider $M^i_j=k\le m_j^i\ri$ for some non-zero vector $m_i^j$.     
By using the correspondences mentioned above, we have the following left Yetter-Drinfeld module structure for $M_i^j$:  
\begin{equation}
    g\cd m_j^i=(-1)^jm_j^i\text{ and }\lambda(m_j^i)=g^i\ot m_j^i.
\end{equation}

With notation as in Proposition \ref{3dYD}, we have that  
\begin{eqnarray}
&& B_{S_3} = k 1 \oplus k(x + x^2) \oplus k(x - x^2)= M_0^0 \oplus M_0^0 \oplus M_1^0 \text{ and }\\
&& B_*=  k (1 + x + x^2) \oplus k(x + x^2) \oplus k(x - x^2)=M_0^0 \oplus M_0^0 \oplus M_0^1.
\end{eqnarray}

We move now to the quasi-Hopf setting. In what follows, $\mfq$ is a forth root of $1$ in $k$, so $\mfq^2=-1$.  

\begin{proposition}\label{simp mod htw}
    ${}_\htw^\htw{\cal YD}$ is a semisimple monoidal category, with $4$ simple objects, let's say $M_i$, $0\leq i\leq 3$. Each  
		$M_i$ is one dimensional and if $m_i$ is a generator of $M_i$, $M_i$ is in ${}_H^H{\cal YD}$ with structure given by 
    \begin{equation}
        g\cd m_i = (-1)^i m_i \quad\text{and}\quad \l(m_i)=(p_+ + \mfq^i p_-)\ot m_i.
    \end{equation}
\end{proposition}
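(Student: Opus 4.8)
The plan is to identify ${}_\htw^\htw{\cal YD}$ with the Drinfeld center of ${}_\htw{\cal M}$, use general fusion-category theory to extract semisimplicity and the exact number of simple objects, and then pin those objects down by directly classifying the one-dimensional Yetter--Drinfeld modules over $\htw$.

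First I would record the structural facts. Since $\htw$ coincides with $k[C_2]$ as an algebra, it is semisimple by Maschke's theorem, so ${}_\htw{\cal M}$ is a fusion category with ${\rm FPdim}({}_\htw{\cal M})=\dim_k\htw=2$; moreover the antipode of $\htw$ is the identity, hence bijective, so by the part of the excerpt recalled above ${}_\htw^\htw{\cal YD}$ is braided monoidal and is (braided) equivalent to the center $Z({}_\htw{\cal M})$. By \cite{eno} the center of a fusion category is again a fusion category, so ${}_\htw^\htw{\cal YD}$ is semisimple with finitely many simple objects, and ${\rm FPdim}(Z({}_\htw{\cal M}))={\rm FPdim}({}_\htw{\cal M})^2=4$.

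Next I would classify the one-dimensional objects $M=km$. As a $k[C_2]$-module, $g$ acts on $m$ by a square root of $1$, so $g\cd m=(-1)^im$ for some $i$; and since $M$ is one-dimensional the coaction is forced to read $\l(m)=c\ot m$ for a single $c\in\htw$, where the counit condition $\va(m_{[-1]})m_{[0]}=m$ gives $\va(c)=1$, i.e. $c=p_++\nu p_-$ for a scalar $\nu$. The module--comodule condition (\ref{y3}) is automatic here since $\htw$ is commutative. The real content is the Yetter--Drinfeld axiom (\ref{y1}): plugging $\Phi_2=1\ot 1\ot 1-2p_-\ot p_-\ot p_-$ into both copies of the reassociator, expanding each side over the two tensor terms of $\Phi_2$, and using $\Delta(p_\pm)$ together with $p_-\cd m=\tfrac{1-(-1)^i}{2}m$, I expect (\ref{y1}) to collapse to the single scalar equation $\nu^2=(-1)^i$. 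This is equivalent to $\nu=\mfq^i$, with the parity of $i$ matching the eigenvalue $(-1)^i$, and it produces exactly the four modules $M_0,\dots,M_3$ of the statement; they are pairwise non-isomorphic because any morphism must preserve both the $g$-eigenvalue and the scalar $\nu$.

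Finally I would close by a dimension count. Each $M_i$ is one-dimensional, hence simple with ${\rm FPdim}(M_i)=1$, and the four are distinct; since $\sum_{i=0}^3{\rm FPdim}(M_i)^2=4={\rm FPdim}(Z({}_\htw{\cal M}))$ and every simple object of a fusion category has Frobenius--Perron dimension at least $1$, no further simple object can exist. Thus $\{M_0,M_1,M_2,M_3\}$ is a complete list of the simples, all one-dimensional with the asserted structure. The main obstacle is the honest verification that (\ref{y1}) reduces to $\nu^2=(-1)^i$: one must carry the non-trivial reassociator through the quasi-coassociative coaction axiom and track the four cross terms arising from the two copies of $\Phi_2$, the point being that the eigenvalue hypothesis $g\cd m=(-1)^im$ is exactly what forces the $p_-$-components on the two sides to agree.
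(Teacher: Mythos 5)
Your proposal is correct, but it follows a genuinely different route from the paper's. The paper works through the quantum double: citing \cite[Proposition 2.10]{bct}, it presents $D(\htw)$ by generators $X=\va\bowtie g$, $Y=(P_1-P_g)\bowtie 1$ with $X^2=1$, $Y^2=X$, so $D(\htw)\cong k[C_4]$ as an algebra; semisimplicity and the count of simples then fall out of the chain ${}_\htw^\htw{\cal YD}\simeq{}_\htw{\cal YD}^\htw\simeq{}_{D(\htw)}{\cal M}\simeq{}_{k[C_4]}{\cal M}$, and the explicit action and coaction are obtained by transporting the $k[C_4]$-module structure back through the concrete correspondences of \cite[Lemma 8.28 and Theorem 8.14]{bcpvo}, which costs a computation with the elements $p_R$, $q_R$, $p_L$ and several nontrivial contractions. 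You replace both ingredients: semisimplicity and the Frobenius--Perron bound come from the general theory of \cite{eno} applied to $Z({}_\htw{\cal M})$, of FP-dimension $2^2=4$, and the simples are found by solving the Yetter--Drinfeld axioms directly for one-dimensional modules. The one step you left as an expectation does check out: writing $g\cd m=(-1)^im$ and $\l(m)=(p_++\nu p_-)\ot m$, and using $p_\pm(p_++\nu p_-)=p_\pm(p_++\nu p_-)p_\pm$-type contractions together with $(p_\pm\ot p_\pm)\Delta(p_++\nu p_-)(p_-\ot p_-)$ reducing to multiples of $p_-\ot p_-$, the three cross terms on the right-hand side of (\ref{y1}) carry coefficients $-(1-(-1)^i)$, $-(1-(-1)^i)$ and $+2(1-(-1)^i)$ on $p_-\ot p_-\ot m$ and cancel, so the right-hand side is just $\Delta(p_++\nu p_-)\ot m$, while the left-hand side equals $(p_++\nu p_-)\ot(p_++\nu p_-)\ot m-(1-(-1)^i)\nu^2\,p_-\ot p_-\ot m$; comparing the two yields exactly $\bigl((-1)^i\nu^2-1\bigr)p_-\ot p_-\ot m=0$, i.e.\ $\nu^2=(-1)^i$, which is your claimed equation and gives precisely $\nu=\mfq^i$ with the labelling of the statement (the compatibility (\ref{y3}) is indeed automatic since $\htw$ is commutative with group-like $g$). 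Your closing count is also sound: one-dimensional objects are simple of Frobenius--Perron dimension $1$, and four pairwise non-isomorphic such objects already saturate ${\rm FPdim}=4$, so no further simples exist. As for trade-offs: your argument is shorter, avoids the double and the transport formulas entirely, and generalizes in principle to $k_\Phi[G]$ (giving the bound $|G|^2$ at once), but it leans on the deep results of \cite{eno} (the center of a fusion category is fusion with squared FP-dimension); the paper's route stays inside the elementary quasi-Hopf toolkit of \cite{bcpvo} and, as a by-product of identifying ${}_\htw^\htw{\cal YD}$ with ${}_{k[C_4]}{\cal M}$, exhibits the full categorical (indeed braided) structure rather than only the list of simple objects.
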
 
\begin{proof}

     By \cite[Proposition 2.10]{bct}, $D(\htw)$ is generated as an algebra by $X=\va\bowtie g$ and $Y=(P_1-P_g)\bowtie 1$; here $\{P_1, P_g\}$ is the basis 
		of $k[C_2]^*$ dual to the basis $\{1, g\}$ of $k[C_2]$. $X, Y$ verify $X^2=1$ and $Y^2=X$, so $D(H(2))=k[\le Y\ri]\cong k[C_4]$ as an algebra. Then, the 
	quasi-Hopf algebra structure of $D(\htw)$ is defined by 
		\begin{eqnarray*}
		&&\Delta(Y)=-\frac{1}{2}(Y\ot Y + Y^3\ot Y + Y\ot Y^3 - Y^2\ot Y^3),~~\va(Y)=-1,\\
		&&\Phi=1-2p_{-}^X\ot p^X_{-}\ot p^X_{-},~~S_D(Y)=Y,~~\a_D=X,~~\b_D=1,
		\end{eqnarray*}
		where $p_{-}^X=\frac{1}{2}(1-X)$; note that $X=Y^2$ is a grouplike element of $D(\htw)$ of order two, so $p^X_-$ is an idempotent element of $D(\htw)$. 
		
As in the Hopf case, we have an isomorphism of categories ${}_{\htw}^{\htw}{\cal YD}\simeq {}_{\htw}{\cal YD}^{\htw}\simeq {}_{D(\htw)}{\cal M}\simeq 
{}_{k[C_4]}{\cal M}$; see \cite[Theorems 8.14 and 8.29]{bcpvo}. Since ${}_{k[C_4]}{\cal M}$ is a semisimple category with four non-isomorphic simple objects, 
we deduce that ${}_{\htw}^{\htw}{\cal YD}$ is a semisimple category with four non-isomorphic simple objects, too.
		
Denoted by $M_j$, $j\in \{0,1,2,3\}$, the simple modules over $k[C_4]$. If $m\in M_j$, then $Y\cdot m= \mfq^j m$. By moving forward, we describe the four 
simple objects of ${}_{\htw}^{\htw}{\cal YD}$.  
     
    The correspondence in \cite[Lemma 8.28]{bcpvo} allows us to compute the simple objects of ${}_{\htw}{\cal YD}^\htw$, still denoted by $\{M_j\}_j$. 
		Since $\htw=k[\le X\ri]\cong k[C_2]$ as a subalgebra of $D(\htw)$, the left $\htw$-action on $M_j$ is given, for all $m\in M_j$, by
    \begin{equation*}
        X \cdot m = Y^2 \cdot m = \mfq^{2j} m = (-1)^j m.
    \end{equation*}
  Also, by the quoted result, the right $\htw$-coaction on $M_j$ is defined by 
    \[
        \rho(m)= (P_1 \bowtie p^1_2)m \ot S^{-1 }(p^2)p^1_1 + (P_1 \bowtie \tilde{p}^1_2)m \ot S^{-1 }(p^2)gp^1_1, 
     \]
	where $p_R=p^1\ot p^2=x^1\ot x^2\b S(x^3)=1-2p_{-}\ot p_{-}$. It is immediate that $\Delta(p^1)\ot p^2=g\ot g\ot p_- + 1\ot 1\ot p_+$, hence 
	\begin{eqnarray*}
			\rho(m)	&=&- (P_1\bowtie g)m\ot p_- + (P_g\bowtie g)m\ot p_- + (P_1\bowtie 1)m\ot p_+ + (P_g\bowtie 1)m\ot p_+\\
			        &=& -((P_1-P_g)\bowtie g)m\ot p_- + ((P_1+P_g)\bowtie 1)m\ot p_+\\
							&=& -(Y(\va\bowtie g))m\ot p_- + (\va\bowtie 1)m\ot p_+\\
							&=& -Y^3m\ot p_- + m\ot p_+\\
							&=&m\ot p_{+}-\mfq^{3j}m\ot p_{-}=m\ot p_{+} - \mfq^{-j}p_{-}.
    \end{eqnarray*}

    To achieve our aim, we use the functor described in \cite[Theorem 8.14]{bcpvo} in order to get the left 
		Yetter-Drinfeld structure over $\htw$ of $M_j$. To this end, we need the elements  
    \begin{eqnarray*}
        q_R&=&q^1 \ot q^2:=X^1\ot S^{-1}(\a X^3)X^2= 1\ot g + 2p_-\ot p_-=1\ot p_+ - g\ot p_-~~{\rm and}\\
        p_L&=&\tilde{p}^1\ot \tilde{p}^2:=X^2S^{-1}(X^1\b)\ot X^3=1\ot p_+ + g\ot p_-.
				\end{eqnarray*}
		A simple computation ensures us that 		
	\[			
	(\Delta(q^1)\ot q^2)\Phi^{-1}=1 - 2p_+\ot p_+ \ot p_-
  \]
	and since $p_\pm\cdot m=\frac{1}{2}(1\pm g^{2j})m$, for all $m\in M_j$, we get that, for all $m\in M_j$, 
	\begin{eqnarray*}
	&&\hspace{-1cm}
	(\widetilde{p}^2\cdot m)_{(1)}\tilde{p}^1\ot (\tilde{p}^2\cdot m)_{(0)}\\
	&=&(p_+\cdot m)_{(1)}\ot (p_+\cdot m)_{(0)} + (p_-\cdot m)_{(1)}g\ot (p_-\cdot m)_{(0)}\\
	&=&\frac{1}{2}(1+\mfq^{2j})(p_+-q^{-j}p_-)\ot m + \frac{1}{2}(1-\mfq^{2j})(p_+ - \mfq^{-j}p_-)g\ot m\\
	&=&(p_+ - \mfq^jp_-)\ot m.
	\end{eqnarray*}
	 Now, by \cite[Lemma 8.28]{bcpvo}, the left $\htw$-coaction on $M_j$ is given, for all $m\in M_j$, by  
    \begin{eqnarray*}
        \lambda(m)&=& q^1_1 x^1 S(q^2 x^3 (\tilde p^2 \cdot m)_1 \tilde p^1)\ot q_2^1 x^2 \cdot (\tilde p^2 \cdot m)_0\\
                  &=& S((\tilde p^2 \cdot m)_{(1)} \tilde p^1)\ot (\tilde p^2 \cdot m)_{(0)} - 2p_+
				S(p_-(\tilde p^2 \cdot m)_{(1)} \tilde p^1)\ot p_+\cdot (\tilde p^2 \cdot m)_{(0)}\\
				          &=&(p_+-\mfq^{-j}p_-)\ot m.
    \end{eqnarray*}
  The left $\htw$-module structure on $M_j$, considered as a left Yetter-Drinfeld module, remains the same: $g\cdot m=(-1)^jm$, for all $m\in M_j$. 
  By switching $1$ and $3$ and, respectively, $0$ and $3$, we obtain the structure of the simple objects $\{ M_j\}_{0\leq j\leq 3}$ of 
	${}_{\htw}^{\htw}{\cal YD}$ as in the statement of the proposition.
\end{proof}

\begin{corollary}\label{YDH2 3casesforB}
    Let $B$ be a $3$-dimensional bialgebra in ${}_{\htw}^{\htw}{\cal YD}$. $B$ is isomorphic as a Yetter-Drinfeld module to one of the following:
    \begin{eqnarray}
        && \label{3ydh2c1} M_0 \oplus M_{2i} \oplus M_{2j};\\
        && \label{3ydh2c2} M_0 \oplus M_{2i} \oplus M_{2j+1};\\
        && \label{3ydh2c3} M_0 \oplus M_{2i+1} \oplus M_{2j+1},  
    \end{eqnarray} 
    where $i,j\in\{ 0,1\}$ and $\{M_l\}_l$ are as in Proposition \ref{simp mod htw}.
\end{corollary}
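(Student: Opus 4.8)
The plan is to read the result off directly from the semisimplicity statement of Proposition~\ref{simp mod htw}, using a single structural input: the tensor unit of ${}_\htw^\htw{\cal YD}$ is $M_0$, so the unit of the bialgebra $B$ forces a copy of $M_0$ to split off. Concretely, Proposition~\ref{simp mod htw} gives that ${}_\htw^\htw{\cal YD}$ is semisimple with four one-dimensional simple objects $M_0, M_1, M_2, M_3$. Since $\dim_k B = 3$, the underlying Yetter-Drinfeld module of $B$ must decompose as $B \cong M_a \oplus M_b \oplus M_c$ with $a, b, c \in \{0,1,2,3\}$, repetitions allowed. Thus the only thing to prove is that one of the three simple summands is forced to be $M_0$; the three displayed cases are then simply the partition of the remaining two summands according to the parity of their indices.

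To establish that $M_0$ always occurs, I would first observe that $M_0$ is the monoidal unit of ${}_\htw^\htw{\cal YD}$: its generator $m_0$ satisfies $g \cd m_0 = m_0 = \va(g) m_0$ and, since $\mfq^0 = 1$ and $p_+ + p_- = 1$, has coaction $\l(m_0) = (p_+ + p_-) \ot m_0 = 1 \ot m_0$, which is exactly the trivial action and coaction carried by the unit object $k$. Next, because $B$ is a YD-bialgebra it is in particular a YD-algebra, so relation (\ref{modalg1}) --- which reads $g \cd 1_B = \va(g) 1_B$ together with ${1_B}_{[-1]} \ot {1_B}_{[0]} = 1_H \ot 1_B$ --- says precisely that the unit map $u \colon k \to B$, $1_k \mapsto 1_B$, is a morphism in ${}_\htw^\htw{\cal YD}$. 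As $1_B \neq 0$ this map is injective, so $k 1_B$ is a Yetter-Drinfeld subobject isomorphic to $M_0$, and semisimplicity splits it off as a direct summand. Hence I may take $M_a = M_0$.

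Finally, to obtain the three stated forms I would sort the two remaining summands $M_b, M_c$ by parity: from $g \cd m_i = (-1)^i m_i$ the simples on which $g$ acts trivially are exactly $M_0, M_2$, i.e. those of the form $M_{2i}$ with $i \in \{0,1\}$, and those on which $g$ acts by $-1$ are $M_1, M_3$, i.e. those of the form $M_{2i+1}$. According to whether none, one, or both of $M_b, M_c$ are of the odd type, $B$ takes the form (\ref{3ydh2c1}), (\ref{3ydh2c2}) or (\ref{3ydh2c3}) respectively, with $i, j$ ranging over $\{0,1\}$. I do not expect a genuine obstacle: the result is essentially immediate once Proposition~\ref{simp mod htw} is in hand, and the only point that needs care is the identification of $M_0$ with the tensor unit together with the verification that (\ref{modalg1}) genuinely makes the bialgebra unit a Yetter-Drinfeld morphism. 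The remaining parity bookkeeping is routine, and one should simply keep in mind that repeated summands are permitted (for instance $i=j=0$ in (\ref{3ydh2c1}) yields $B \cong M_0 \oplus M_0 \oplus M_0$).
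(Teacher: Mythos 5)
Your proposal is correct and is essentially the paper's own proof: both invoke the semisimplicity and the list of simples from Proposition \ref{simp mod htw} to decompose $B$ into three one-dimensional summands, then use the last two relations in (\ref{modalg1}) to force the summand spanned by $1_B$ to be $M_0$, with the three displayed cases being parity bookkeeping on the remaining two summands. Your write-up merely spells out what the paper leaves implicit, namely that $M_0$ is the tensor unit and that (\ref{modalg1}) makes the unit map $k\to B$ a morphism in ${}_\htw^\htw{\cal YD}$.
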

\begin{proof}  
    The Corollary follows directly from Proposition \ref{simp mod htw}, by considering  the last two relations in (\ref{modalg1}), 
		which imply that one of the  Yetter-Drinfeld submodules of $B$, containing the unit, must be isomorphic to $M_0$.
\end{proof}

For a braided bialgebra, the case (\ref{3ydh2c3}) cannot occur. 

\begin{lemma}\label{lemmaYDH2 case 3}
    Let $B$ be a $3$-dimensional bialgebra in ${}_{\htw}^{\htw}{\cal YD}$. In the notation of Proposition \ref{simp mod htw}, $B$ is not isomorphic as a Yetter-Drinfeld module to $M_0 \oplus M_{2i+1} \oplus M_{2j+1}$.
\end{lemma}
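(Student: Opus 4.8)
The plan is to use the two gradings on $B$ that the $H$-action and $H$-coaction induce, show that in case (\ref{3ydh2c3}) the two odd generators are forced to be primitive, and then contradict the multiplicativity of $\un{\Delta}_B$.

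First I would record the gradings. Since $g$ is grouplike, $\Delta(g)=g\ot g$, so the first identity in (\ref{modalg1}) shows that $\un{m}_B$ respects the $\mb{Z}_2$-grading in which $b$ is even or odd according to $g\cd b=b$ or $g\cd b=-b$; the $H$-linearity of $\un{\Delta}_B$ and $\un{\va}_B$ from \equref{ydc1} shows the same for the comultiplication and that $\un{\va}_B$ is supported in even degree. By Corollary \ref{YDH2 3casesforB} the summand containing $1_B$ is $M_0$, which is one dimensional, so in case (\ref{3ydh2c3}) the even part is $B_0=M_0=k1_B$ and the odd part is $B_1=M_{2i+1}\oplus M_{2j+1}$; in particular $B_1B_1\subseteq B_0=k1_B$. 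Writing $u,v$ for generators of $M_{2i+1},M_{2j+1}$, the $H$-linearity of $\un{\va}_B$ together with $\va(g)=1$ gives $\un{\va}_B(u)=\un{\va}_B(g\cd u)=-\un{\va}_B(u)$, so $\un{\va}_B$ vanishes on $B_1$, while $\un{\va}_B(1_B)=1$ by (\ref{moltcon1}).

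Next I would pin down the coproducts of $u$ and $v$. Because $\un{\Delta}_B$ is graded, $\un{\Delta}_B(u)\in B_0\ot B_1\oplus B_1\ot B_0$, hence $\un{\Delta}_B(u)=1_B\ot w+z\ot 1_B$ for some $w,z\in B_1$; applying $\un{\va}_B\ot\Id$ and $\Id\ot\un{\va}_B$ and using $\un{\va}_B|_{B_1}=0$ forces $w=z=u$ via the counit axiom in \equref{ydc1}. Thus $u$, and likewise $v$, is primitive, $\un{\Delta}_B(u)=u\ot 1_B+1_B\ot u$. Finally I would compute $\un{\Delta}_B(uv)$ in two ways. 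Since $uv\in B_0=k1_B$ and $\un{\Delta}_B(1_B)=1_B\ot 1_B$, the left-hand side lies in $k(1_B\ot 1_B)$. On the other hand $\un{\Delta}_B$ is an algebra morphism, so I substitute the primitive coproducts of $u,v$ into the product formula (\ref{moltcon2}). Every monomial there in which the nontrivial part $-2p_-\ot p_-\ot p_-$ of the reassociator $\Phi_2$ could act meets a tensor slot filled by $1_B$, on which $p_-$ acts as $0$; hence all reassociator corrections collapse through the counit and one is left with the braided product of two primitives,
\[
\un{\Delta}_B(uv)=uv\ot 1_B+1_B\ot uv+u\ot v+\mfq^{2i+1}v\ot u,
\]
where the cross term $\mfq^{2i+1}v\ot u$ is produced by $\lambda(u)=(p_++\mfq^{2i+1}p_-)\ot u$ and $p_-\cd v=v$. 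Now $u\ot v+\mfq^{2i+1}v\ot u$ is a nonzero element of $B_1\ot B_1$: if $i\neq j$ the tensors $u\ot v$ and $v\ot u$ lie in the distinct blocks $M_1\ot M_3$ and $M_3\ot M_1$, and if $i=j$ then $u,v$ form a basis of the two dimensional $B_1$, so $u\ot v,v\ot u$ are independent. Since $(B_0\ot B_0)\cap(B_1\ot B_1)=0$, this contradicts $\un{\Delta}_B(uv)\in k(1_B\ot 1_B)$, and case (\ref{3ydh2c3}) is impossible.

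The one genuinely delicate point is the reassociator bookkeeping in the last step: one must check carefully that in (\ref{moltcon2}) the $p_-^{\ot 3}$ contributions really do vanish once $u$ and $v$ are replaced by their primitive coproducts, so that the naive braided-bialgebra computation is valid. The grading and primitivity arguments are otherwise routine.
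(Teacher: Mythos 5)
Your proof is correct, but it takes a genuinely different route from the paper's. The paper first determines the algebra structure completely: using the braided associativity (\ref{modalg2}), where the nontrivial part $-2p_-\ot p_-\ot p_-$ of $\Phi_2$ acts on three odd elements and produces a sign, it forces $x^2=y^2=xy=yx=0$; it then derives primitivity of $x$ (as you do, from $H$-linearity of $\un{\Delta}_B$ and the counit), and finally applies (\ref{moltcon2}) to $x^2$ to get $0=\un{\Delta}_B(x^2)=(1+\mfq^{2i+1})\,x\ot x$, reaching a contradiction through the arithmetic fact that $\mfq^{2i+1}=\pm\mfq\neq -1$ (since $\mfq^2=-1$). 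You bypass the associativity step entirely: the $g$-grading alone gives $B_1B_1\subseteq B_0=k1_B$, and by computing $\un{\Delta}_B(uv)$ for the two \emph{distinct} odd generators you obtain the component $u\ot v+\mfq^{2i+1}v\ot u\in B_1\ot B_1$, which is nonzero for pure linear-independence reasons, with no input about the value of $\mfq$ needed. This makes your argument shorter and more robust (it would work with any nonzero braiding scalar in place of $\mfq^{2i+1}$, whereas the paper's contradiction via $u^2$-type terms hinges on $1+\mfq^{2i+1}\neq 0$); what the paper's longer route buys is the explicit product relations, and that style of computation is exactly the template reused in Lemma \ref{lemmaYDH2 algebra case 2} and Proposition \ref{lemmaYDH2 case 2}, where the full algebra structures are needed for the classification.

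On the point you flag as delicate: your collapse claim is right, but the clean justification is via counits rather than literally ``$p_-$ acts as $0$ on $1_B$'' leg by leg, because in (\ref{moltcon2}) the leg $X^3$ appears comultiplied as $X^3_1\ot X^3_2$, and $\Delta(p_-)=p_+\ot p_- + p_-\ot p_+$ is not $p_-\ot p_-$. What saves the computation is that $h\cdot 1_B=\va(h)1_B$ and $\va(p_-)=0$, and one checks in each of the four monomials that every insertion of $\Phi_2^{\pm 1}$ has a full leg (or both tensorands of a split leg, whose counits recombine by $\va(y^3_1)\va(y^3_2)=\va(y^3)$) landing on a $1_B$-slot; this kills the nontrivial part of each insertion and yields exactly your formula $\un{\Delta}_B(uv)=uv\ot 1_B+1_B\ot uv+u\ot v+\mfq^{2i+1}v\ot u$, in agreement with the cross term $x_{[-1]}\cdot x\ot x_{[0]}$ appearing in the paper's computation.
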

\begin{proof}
    Suppose the contrary. Namely, that $B=k 1 \oplus k x \oplus k y$ is a bialgebra in ${}_{\htw}^{\htw}{\cal YD}$ with unit $1$ and $x, y\in B$ such that 
		$kx\simeq M_{2i+1}$ and $ky\simeq M_{2j+1}$ as left Yetter-Drinfeld $\htw$-modules, for some $i, j\in \{0,1\}$. 
    We characterize the braided bialgebra structure of $B$ in terms of its generators $x,y$. For instance, 
    by applying (\ref{modalg1}) to $x, y$ we deduce that 
    \begin{gather}
        g\cdot x^2 = x^2,\quad g\cdot y^2 = y^2,\quad g\cdot xy = xy,\quad g\cdot yx = yx,
    \end{gather}
    implying that $x^2, y^2, xy, yx\in k1$; otherwise stated,  
    \begin{gather}
        x^2=\o 1,\quad y^2=\o' 1,\quad 
        xy= l 1,\quad yx=r1,
    \end{gather}
    for some $\o,\o',l,r\in k$. Secondly, $p_+\cdot x=p_+\cdot y=0$ and $p_{-} \cdot x=x$, $p_{-}\cdot y=y$. These relations together with 
		(\ref{modalg2}) applied to the generators $x, y$ lead to the following equalities: 
    \begin{align}
        \o x=(xx)x=-x(xx)=-\o x&,\quad
        \o' y=(yy)y=-y(yy)= -\o' y;\\
        \o y = (xx)y=-x(xy)=-l x&,\quad
        rx=(yx)x=-y(xx)=-\o y;\\
        l x=(xy)x=-x(yx)=-rx&,\quad
        l y =(xy)y=-x(yy)=-\o' x;\\
        ry=(yx)y=-y(xy)=-ly &,\quad
        \o' x =(yy)x=-y(yx)=-ry.
    \end{align}
    Thus $l=r=\o=\o'=0$, and therefore $B$ is generated as an algebra by $x,y$ with relations $x^2=y^2=xy=yx=0$.
    
    From $\un{\va}_B(g\cdot b)=\va(g)\,\un{\va}_B(b)=\un{\va}_B(b)$, we get $\un{\va}_B(x)=\un{\va}_B(y)=0$.
    Thus, if we consider $\un{\Delta}_B(x)=\sum_{a,b\in\{1,x,y\}} \alp_{ab}\ a\ot b $, the counit properties say that 
    \begin{equation}
        \un{\Delta}_B(x)=1\ot x + x \ot 1 + \alp_{xx} x\ot x + \alp_{yy} y\ot y + \alp_{xy} x\ot y + \alp_{yx} y\ot x.
     \end{equation}
     
    The first relation in (\ref{YDcoal con1}) applied to $x$ entails to $-\un{\Delta}_B(x)=(g\ot g)\cdot \un{\Delta}_B(x)$, and is equivalent to
    \begin{equation}
        \begin{pmatrix}
        0 & -1 & 0\\
        -1 & -\alp_{xx} & -\alp_{xy}\\
        0 & -\alp_{yx} & -\alp_{yy}
    \end{pmatrix}
    = 
    \begin{pmatrix}
        0 & -1 & 0\\
        -1 & \alp_{xx} & \alp_{xy}\\
        0 & \alp_{yx} & \alp_{yy}
    \end{pmatrix}.
    \end{equation}
    We conclude that $\un{\Delta}(x)=x\ot 1 + 1\ot x$ and $\un{\va}(x)=0$, that is, $x$ is a primitive element. Consequently, by considering 
		(\ref{moltcon2}) for $x, x$, since $x^2=0$, $p_{-}\cdot x=x$ and $p_+\cdot x=0$, we get that 
    \begin{align*}
        0=\un{\Delta}_B(x^2)=
                &(y^1X^1\cdot 1)(y^2Y^1(x^1X^2\cdot x)_{[-1]}x^2X_1^3\cdot 1) \ot (y_1^3Y^2\cdot (x^1X^2\cdot x)_{[0]})(y_2^3Y^3x^3X_2^3\cdot x)\\
                &+ (y^1X^1\cdot x)(y^2Y^1(x^1X^2\cdot 1)_{[-1]}x^2X_1^3\cdot 1) \ot (y_1^3Y^2\cdot (x^1X^2\cdot 1)_{[0]})(y_2^3Y^3x^3X_2^3\cdot x)\\
                &+(y^1X^1\cdot 1)(y^2Y^1(x^1X^2\cdot x)_{[-1]}x^2X_1^3\cdot x) \ot (y_1^3Y^2\cdot (x^1X^2\cdot x)_{[0]})(y_2^3Y^3x^3X_2^3\cdot 1)\\
                &+ (y^1X^1\cdot x)(y^2Y^1(x^1X^2\cdot 1)_{[-1]}x^2X_1^3\cdot x) \ot (y_1^3Y^2\cdot (x^1X^2\cdot 1)_{[0]})(y_2^3Y^3x^3X_2^3\cdot 1)\\
                =
                &1\ot x^2 + x \ot x + x_{[-1]}  \cdot x \ot     x_{[0]} + x^2\ot 1 
                =    (1+ \mfq^{2i+1})x \ot     x.
    \end{align*}
	Hence, $1+\mfq^{2i+1}=0$ for a certain $i\in \{0, 1\}$, a contradiction. 	
\end{proof}

We study now the case (\ref{3ydh2c2}). For simplicity, we look first at the algebra case.  

\begin{lemma}\label{lemmaYDH2 algebra case 2}
    Let $B$ be a $3$-dimensional algebra in ${}_{\htw}^{\htw}{\cal YD}$, isomorphic as a Yetter-Drinfeld module to $M_0 \oplus M_{2i} \oplus M_{2j+1}$. 
		Then $B$ is isomorphic to one of the following algebras in ${}_{\htw}^{\htw}{\cal YD}$: 
		
		$\bullet$ $B_{2o}^{00}:=k1\oplus k\bar x\oplus k\bar y\simeq M_0\oplus M_2\oplus M_{2j+1}$ in ${}_{\htw}^{\htw}{\cal YD}$, with relations 
		$\bar x^2=\bar y^2=\bar x\bar y=\bar y\bar x=0$;
		
		$\bullet$ $B_{0o}^{10}:=k1\oplus k\bar x\oplus k\bar y\simeq M_0\oplus M_0\oplus M_{2j+1}$ in ${}_{\htw}^{\htw}{\cal YD}$, with relations $\bar x^2=1$, 
		$\bar y^2=0$, $\bar x\bar y=\bar y$, $\bar y\bar x=-\bar y$;
		
		$\bullet$ $B_{0o}^{0}:=k1\oplus k\bar x\oplus k\bar y\simeq M_0\oplus M_0\oplus M_{2j+1}$ in ${}_{\htw}^{\htw}{\cal YD}$, with relations $\bar x^2=0$, 
		$\bar y^2=\bar x$, $\bar x\bar y=\bar y\bar x=0$;
		
		$\bullet$ $B_{0o}^{00}=k1\oplus k\bar x\oplus k\bar y\simeq M_0\oplus M_0\oplus M_{2j+1}$ in ${}_{\htw}^{\htw}{\cal YD}$, with relations $\bar x^2=0$, 
		$\bar y^2=0$, $\bar x\bar y=\bar y\bar x=\bar y$;

        $\bullet$ $B_{0o}^{10c}:=k1\oplus k\bar x\oplus k\bar y\simeq M_0\oplus M_0\oplus M_{2j+1}$ in ${}_{\htw}^{\htw}{\cal YD}$,
        with relations $\bar x^2=1$, 
		$\bar y^2=0$, $\bar x\bar y=\bar y\bar x=\bar y$. 
\end{lemma}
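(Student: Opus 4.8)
The plan is to pin down the multiplication of $B$ from the three $YD$--algebra axioms (\ref{modalg1})--(\ref{modalg3}), starting from the decomposition $B=k1\oplus k\bar x\oplus k\bar y$ furnished by case (\ref{3ydh2c2}) of Corollary~\ref{YDH2 3casesforB}, in which $1\in M_0$, $\bar x$ generates $M_{2i}$ and $\bar y$ generates $M_{2j+1}$. From Proposition~\ref{simp mod htw} I record the data used throughout: $g\cd\bar x=\bar x$, $g\cd\bar y=-\bar y$, and the diagonal coactions $\l(\bar x)=c_x\ot\bar x$, $\l(\bar y)=c_y\ot\bar y$, where $c_x\in\{1,g\}$ according to $i=0,1$ and $c_y=p_++\mfq^{2j+1}p_-$ satisfies $c_y^2=g$.

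First I would exploit (\ref{modalg1}): since $\Delta(g)=g\ot g$, the $g$--action makes $B$ a $\mb{Z}/2$--graded algebra with even part $k1\oplus k\bar x$ and odd part $k\bar y$, so the multiplication is encoded by scalars via $\bar x^2=\alpha 1+\beta\bar x$, $\bar y^2=\gamma 1+\delta\bar x$, $\bar x\bar y=\ell\bar y$ and $\bar y\bar x=r\bar y$. Next I would feed these into the associativity axiom (\ref{modalg2}). The key simplification is that $\Phi_2=1\ot1\ot1-2p_-\ot p_-\ot p_-$ and $p_-$ annihilates every even element, so the reassociator acts trivially on every triple of generators except $(\bar y,\bar y,\bar y)$, where it produces a sign: $(\bar y\bar y)\bar y=-\bar y(\bar y\bar y)$. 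Writing out the associativity identities for the triples $(\bar x,\bar x,\bar y)$, $(\bar y,\bar x,\bar x)$, $(\bar x,\bar y,\bar y)$, $(\bar y,\bar y,\bar x)$, $(\bar y,\bar y,\bar y)$ yields a small polynomial system in $\alpha,\beta,\gamma,\delta,\ell,r$; the cleanest members are $\alpha+\beta\ell=\ell^2$, $r^2=\alpha+\beta r$ and, from the twisted triple, $2\gamma+\delta(\ell+r)=0$, together with the relations tying $\gamma,\delta$ to $\ell,r,\beta$ that come from the mixed triples.

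The heart of the argument is the colinearity axiom (\ref{modalg3}). Because $\bar x,\bar y$ are eigenvectors for the $\htw$--action and the coactions are diagonal, the right-hand side of (\ref{modalg3}) collapses on each product. For $\bar x^2$, $\bar x\bar y$ and $\bar y\bar x$ only the $1\ot1\ot1$ component of each of $\Phi_2,\Phi_2^{-1}$ survives, giving $\l(\bar x^2)=c_x^2\ot\bar x^2=1\ot\bar x^2$, $\l(\bar x\bar y)=c_xc_y\ot\bar x\bar y$ and $\l(\bar y\bar x)=c_xc_y\ot\bar y\bar x$; comparison with the intrinsic coaction forces $\beta(c_x-1)=\ell(c_x-1)=r(c_x-1)=0$. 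For $\bar y^2$ all eight terms of the $\Phi_2\ot\Phi_2^{-1}\ot\Phi_2$ expansion contribute and recombine: using $c_y^2=g$ together with the fact that each weighted leg-sum of $\Phi_2$ equals $1-2p_-=g$, the four factors of $g$ cancel and one gets $\l(\bar y^2)=1\ot\bar y^2$, whence $\delta(c_x-1)=0$. I expect this $\bar y^2$ computation to be the main obstacle, as it is the only place where the full reassociator expansion must be carried through.

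Combining the two axioms, I would split on $i$. When $i=1$ one has $c_x=g\neq1$, so colinearity kills $\beta,\ell,r,\delta$, and then the associativity relations force $\alpha=\gamma=0$ as well; every structure constant vanishes and $B\cong B_{2o}^{00}$. When $i=0$ one has $c_x=1$, so the colinearity constraints are vacuous; here I would first normalise $\beta=0$ by the shift $\bar x\mapsto\bar x-\tfrac{\beta}{2}1$, which is an admissible isomorphism in ${}_\htw^\htw{\cal YD}$ because $1$ and $\bar x$ lie in the same isotypic block $M_0$. Solving the reduced system leaves two branches: if $\delta=0$ then $\alpha=\ell^2=r^2$ with $r=\pm\ell$, and rescaling $\bar x$ normalises $\ell\in\{0,1\}$; if $\delta\neq0$ then $\ell=r=\alpha=\gamma=0$ and rescaling gives $\bar y^2=\bar x$. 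Reading off the normal forms — indexed by whether $\bar x^2$ is $0$ or $1$, whether $\bar y^2$ is $0$ or $\bar x$, and whether $\bar x,\bar y$ commute or anticommute — produces exactly $B_{0o}^{00}$, $B_{0o}^{0}$, $B_{0o}^{10}$ and $B_{0o}^{10c}$. I would conclude by verifying that these five algebras satisfy (\ref{modalg1})--(\ref{modalg3}) and are pairwise non-isomorphic in ${}_\htw^\htw{\cal YD}$, the separating invariants being the Yetter--Drinfeld type of $\bar x$, the values of $\bar x^2$ and $\bar y^2$, and the commutator $\bar x\bar y-\bar y\bar x$.
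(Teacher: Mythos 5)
Your proposal is correct and follows essentially the same path as the paper's proof: (\ref{modalg1}) gives the $\mathbb{Z}/2$-graded structure constants, (\ref{modalg2}) produces exactly the system \equref{concomalghtw} (your $\ell^2=\alpha+\beta\ell$, $r^2=\alpha+\beta r$, $2\gamma+\delta(\ell+r)=0$ plus the mixed-triple relations), (\ref{modalg3}) --- including your leg-sum evaluation $\l_B(\bar y^2)=1\ot \bar y^2$, which is also how the paper computes it --- forces $i=0$ or $\beta=\delta=\ell=r=0$, and your normalization (shift $\bar x\mapsto \bar x-\frac{\beta}{2}1$, then branching on $\delta$) merely reorganizes the paper's subcases 2.1/2.2, which use the identical shift $x'=\frac{\beta}{2}1-x$. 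One point worth noting: the normal form you derive for $B_{0o}^{00}$, with $\bar x\bar y=\bar y\bar x=0$, is what the paper's own proof actually produces; the relations $\bar x\bar y=\bar y\bar x=\bar y$ printed in the lemma's statement are a typo, since they contradict $(\bar x\bar x)\bar y=\bar x(\bar x\bar y)$ (equivalently $\gamma^2=\alpha+\beta\gamma$ in the paper's notation), so your argument establishes the corrected statement, in agreement with the paper's proof.
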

\begin{proof}
    We use arguments similar to the ones in the proof of Lemma \ref{lemmaYDH2 case 3}. Assume that 
		$B=k1\oplus kx\oplus ky\simeq M_0\oplus M_{2i}\oplus M_{2j+1}$ in ${}_{\htw}^{\htw}{\cal YD}$, for some $i,j\in \{0,1\}$. 
		
		We have $g\cdot x=x$ and $g\cdot y=-y$, so the first relation in (\ref{modalg1}) yields $g\cdot x^2=x^2$ and $g\cdot y^2=y^2$. 
		Thus, $x^2=\alpha 1 + \beta x$ and $y^2=\ov{\alpha}1+\ov{\beta}x$, for some scalars $\alpha, \ov{\alpha}, \beta, \ov{\beta}\in k$. 
    Likewise, $g\cdot (xy)=-xy$ and $g\cdot (yx)=-yx$, hence $xy=\gamma y$ and $yx=\ov{\gamma}y$, for some $\gamma, \ov{\gamma}\in k$. 
		
Clearly, $p_-\cdot x=0$ and $p_-\cdot y=y$. Therefore, the associativity of the multiplication of $B$, expressed by the equations in (\ref{modalg2}), is equivalent to the following equalities:
\begin{eqnarray*}
&&(xx)x=x(xx),~(yy)y=-y(yy),~(yx)x=y(xx),~~(xx)y=x(xy);\\
&&(xy)x=x(yx),~(xy)y=x(yy),~(yx)y=y(xy), (yy)x=y(yx).
\end{eqnarray*}
These are satisfied if and only if 
\begin{equation}\eqlabel{concomalghtw}
\begin{array}{l}
\ov{\beta}(\gamma + \ov{\gamma})=-2\ov{\alpha},~\ov{\gamma}^2=\alpha +\beta\ov{\gamma},~\gamma^2=\alpha +\beta\gamma;\\
\alpha\ov{\beta}=\ov{\alpha}\gamma=\ov{\alpha}\ov{\gamma},~\ov{\alpha}+\beta\ov{\beta}=\ov{\beta}\gamma=\ov{\beta}\ov{\gamma}.
\end{array}
\end{equation}

It remains to study the colinearity of the multiplication of $B$. By (\ref{modalg3}), applied to our structure, we find that 
\begin{eqnarray*}
\l_B(x^2)&=&X^1(x^1Y^1\cdot x)_{[-1]}x^2(Y^2\cdot x)_{[-1]}Y^3\ot (X^2\cdot (x^1Y^1\cdot x)_{[0]})(X^3x^3\cdot (Y^2\cdot x)_{[0]})\\
&=&X^1(p_+ + (-1)^ip_-)^2\ot (X^2\cdot x)(X^3\cdot x)=1\ot x^2,
\end{eqnarray*}
and so $\beta (p_+ + (-1)^ip_-)\ot x=\beta 1\ot x$; we conclude that $i=0$ or $\beta=0$. 

In a similar way, we compute that 
\begin{eqnarray*}
\l_B(y^2)&=&X^1(x^1\cdot y)_{[-1]}x^2y_{[-1]}g\ot (X^2\cdot (x^1\cdot y)_{[0]})(X^3x^3\cdot y_{[0]})\\
&=&
(p_+ - p_-)\ot y^2 + 2 p_- \ot y^2  =1\ot y^2,
\end{eqnarray*}    
hence $\ov{\beta}(p_+ + (-1)^ip_-)\ot x=\ov{\beta}1\ot x$; so, either $i=0$ or $\ov{\beta}=0$. 
 
By using (\ref{modalg3}) for $x$ and $y$, we get $\gamma(p_+ + \mfq^{2j+1}p_-)\ot y=\l_B(xy)=\gamma(p_+ +\mfq^{2(i+j)+1}p_-)\ot y$, 
and therefore $i=0$ or $\gamma=0$. Similarly, $\ov{\gamma}(p_+ + \mfq^{2j+1}p_-)\ot y=\l_B(yx)=\ov{\gamma}(p_+ + \mfq^{2(j+i)+1}p_-)\ot y$, 
which entails to $i=0$ or $\ov{\gamma}=0$.

\un{Case 1}: i=1. We get $\beta=\ov{\beta}=\gamma=\ov{\gamma}=0$, implying $\alpha=\ov{\alpha}=0$, too. Hence $B=B_{2o}^{00}$.

\un{Case 2}: i=0. We get the relations stated in \equref{concomalghtw}; we distinguished the following possibilities: 

\un{2.1}: $\gamma\not=\ov{\gamma}$. Then $\ov{\beta}=0$, so $\ov{\alpha}=0$, too. In addition, $\ov{\gamma}=\beta - \gamma$, 
$\gamma^2=\alpha + \beta\gamma$, $4\alpha\not =-\beta^2$ and $x, y$ verify $x^2=\alpha 1 +\beta x$, $y^2=0$, $xy=\gamma y$ and 
$yx=(\beta-\gamma)y$. If we set $x':=\frac{\beta}{2}1 - x$ and $\delta:=\frac{\beta}{2} - \gamma$ then $\delta\not=0$ and 
$x'^2=\delta^21$, $x'y=\delta y$ and $yx'=-\delta y$. Take now $\ov{x}:=\delta^{-1}x'$ to get $\ov{x}^2=1$, $y^2=0$, $\ov{x}y=y$ and 
$y\ov{x}=-y$. It is clear at this point that the correspondence defined by $\ov{x}\mapsto \delta^{-1}x'=\delta^{-1}(\frac{\beta}{2}1 -x)$ and 
$\ov{y}\mapsto y$ defines an algebra isomorphism between $B_{0o}^{10}$ and $B$ in ${}_{\htw}^{\htw}{\cal YD}$. Note that 
$B_{0o}^{10}\simeq B=k1\oplus k(\delta^{-1}(\frac{\beta}{2}1-x))\oplus ky\simeq M_0\oplus M_0\oplus M_{2j+1}$ in ${}_{\htw}^{\htw}{\cal YD}$.   


\un{2.2}: $\gamma=\ov{\gamma}$. We are reducing in this case to $\ov{\beta}\gamma=-\ov{\alpha}$, $\gamma^2=\alpha +\beta\gamma$, $\alpha\ov{\beta}=\ov{\alpha}\gamma$ 
and $\ov{\alpha} + \beta\ov{\beta}=\ov{\beta}\gamma$. 

Assume $\ov{\beta}\not=0$. We have $\gamma=\ov{\gamma}=-\frac{\ov{\alpha}}{\ov{\beta}}$, $\alpha=-\big(\frac{\ov{\alpha}}{\ov{\beta}}\big)^2$, 
$\beta=-\frac{2\ov{\alpha}}{\ov{\beta}}$ and $\gamma^2=\alpha +\beta\gamma$ is automatically satisfied. Then $B$ is generated by $x, y$ with relations 
$\big(x+\frac{\ov{\alpha}}{\ov{\beta}}1\big)^2=0$, $y^2=\ov{\alpha}1+\ov{\beta}x$ and $xy=yx=-\frac{\ov{\alpha}}{\ov{\beta}}$. Hence, 
if we take $x':=x+\frac{\ov{\alpha}}{\ov{\beta}}1$ then $x'^2=0$, $y^2=\ov{\beta}x'$ and $x'y=yx'=0$, with $\ov{\beta}\not=0$. 
We deduce from here that the correspondence $\bar x\mapsto \ov{\beta}x'=\ov{\alpha}1+\ov{\beta}x=y^2$, $\bar y\mapsto y$, 
produces an isomorphism of algebras between $B_{0o}^{0}$ and $B$ in ${}_{\htw}^{\htw}{\cal YD}$. Note that 
$B_{0o}^0\simeq B=k1\oplus ky^2\oplus ky\simeq M_0\oplus M_0\oplus M_{2j+1}$ in ${}_{\htw}^{\htw}{\cal YD}$.
%

In the case when $\ov{\beta}=0$, we get $\ov{\alpha}=0$, so $B$ is generated by $x, y$ with relations $x^2=\alpha 1+\beta x$, $y^2=0$ 
and $xy=yx=\gamma y$, where $\alpha, \beta, \gamma$ are scalars obeying $\gamma^2=\alpha + \beta \gamma$. 
For $x':=\frac{\beta}{2}1-x$ and $\delta:= \frac{\beta}{2} - \gamma$, we have that 
$x'^2=\delta^2 1$ and $x'y=yx'=\delta 1$. If we have $\beta^2=-4\alpha$, this leads to $\delta^2=0$, and thus the correspondence 
$\bar x\mapsto x'=\frac{\beta}{2}1-x$ and $\bar y\mapsto y$ defines an 
algebra isomorphism between $B^{00}_{0o}$ and $B$ in ${}_{\htw}^{\htw}{\cal YD}$. Note that, in this case 
$B_{0o}^{00}\simeq B=k1\oplus k(\frac{\beta}{2}1-x)\oplus ky\simeq M_0\oplus M_0\oplus M_{2j+1}$ in ${}_{\htw}^{\htw}{\cal YD}$. 
Finally, if we have that $\beta^2\neq-4\alpha$, and therefore $\delta\neq 0$, we take $\ov x := \delta^{-1}x'$ which satisfies   
${\ov x}^2=1$, $\ov x y = y\ov x = y$ and $y^2=0$. We conclude that the correspondence $\bar x\mapsto \delta^{-1} x'=\delta^{-1} (\frac{\beta}{2}1-x)$ and $\bar y\mapsto y$ defines an   algebra isomorphism between $B^{10c}_{0o}$ and $B$ in ${}_{\htw}^{\htw}{\cal YD}$. Note that  $B_{0o}^{10c}\simeq B=k1\oplus k (\frac{\beta}{2}1-x)\oplus ky\simeq M_0\oplus M_0\oplus M_{2j+1}$ in ${}_{\htw}^{\htw}{\cal YD}$. 
\end{proof}

\begin{proposition}\label{lemmaYDH2 case 2}
    Let $B$ be a $3$-dimensional bialgebra in ${}_{\htw}^{\htw}{\cal YD}$.  If $B$ is isomorphic as a Yetter-Drinfeld module to $M_0 \oplus M_{2i} \oplus M_{2j+1}$ 
		then $B$ is isomorphic to one of the following braided bialgebras:
		
		(i) $B_{0o}^{10}$, with comultiplication and counit given by 
		\begin{equation}\eqlabel{1braidbialgB0o10}
		\begin{array}{l}
		\un{\Delta}(x)=-\frac{1}{2}(1\ot 1 - 1\ot x - x\ot 1 - x\ot x),~\un{\va}(x)=1;\\
		\un{\Delta}(y)=\frac{1}{2}(1\ot y + y\ot 1 + x\ot y + y\ot x),~\un{\va}(y)=0.
		\end{array}
		\end{equation}
		
		(ii) $B_{0o}^{10}$, with comultiplication and counit given by
		\begin{equation}\eqlabel{2braidbialgB0o10}
		\begin{array}{l}
		\un{\Delta}(x)=\frac{1}{2}(1\ot 1 + 1\ot x + x\ot 1 - x\ot x),~\un{\va}(x)=-1;\\
		\un{\Delta}(y)=\frac{1}{2}(1\ot y + y\ot 1 - x\ot y - y\ot x),~\un{\va}(y)=0.
		\end{array}
		\end{equation}

        (iii) $B_{0o}^{10c}$, with comultiplication and counit given by
		\begin{equation}\eqlabel{3braidbialgB0o10c}
		\begin{array}{l}
		\un{\Delta}(x)=\frac{1}{2}(1\ot 1 + 1\ot x + x\ot 1 - x\ot x),~\un{\va}(x)=-1;\\
		\un{\Delta}(y)=\frac{1}{2}(1\ot y + y\ot 1 - x\ot y - y\ot x),~\un{\va}(y)=0.
		\end{array}
		\end{equation}
 \end{proposition}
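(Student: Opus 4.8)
The strategy is to promote the algebra classification of Lemma~\ref{lemmaYDH2 algebra case 2} to a bialgebra classification: the underlying algebra of $B$ must be one of $B_{2o}^{00}$, $B_{0o}^{10}$, $B_{0o}^{0}$, $B_{0o}^{00}$, $B_{0o}^{10c}$, and for each of these I would determine which YD-coalgebra structures turn it into a braided bialgebra. As a first reduction I fix the counit: being an $H$-linear algebra morphism (see (\ref{moltcon1})), $\un{\va}_B$ satisfies $\un{\va}_B(g\cd\mf{b})=\un{\va}_B(\mf{b})$, so $\un{\va}_B(y)=0$ because $g\cd y=-y$, while $\un{\va}_B(x)$ is determined by $\un{\va}_B(x)^2=\un{\va}_B(x^2)$ together with the value of $x^2$; thus $\un{\va}_B(x)=0$ whenever $x^2$ is nilpotent and $\un{\va}_B(x)=\pm1$ when $x^2=1$. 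Next, the $H$-linearity of $\un{\Delta}_B$ in \equref{ydc1} confines $\un{\Delta}_B(x)$ to the $g\ot g$-invariants $\mathrm{span}\{1\ot1,\,1\ot x,\,x\ot1,\,x\ot x,\,y\ot y\}$ and $\un{\Delta}_B(y)$ to $\mathrm{span}\{1\ot y,\,y\ot1,\,x\ot y,\,y\ot x\}$, and the counit property in \equref{ydc1} imposes linear relations among these coefficients.

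For the four algebras with $x\in M_0$ I would exploit that $x$ then has trivial $H$-action and coaction: computing the coaction of \equref{y4} on the summands shows that $y\ot y$ carries the nontrivial $H$-coaction $g$, so the colinearity \equref{ydc3} of $\un{\Delta}_B(x)$, whose coaction must be trivial, forces the $y\ot y$-coefficient to vanish. Consequently $k1\oplus kx$ becomes an honest $2$-dimensional sub-bialgebra in the trivial subcategory, and its comultiplication is one of the finitely many bialgebra structures on a $2$-dimensional algebra (equivalently, a monoid structure with unit on a two-element set), which pins $\un{\Delta}_B(x)$ down to the candidate formulas. The decisive step is then to impose multiplicativity, i.e.\ to evaluate the right-hand side of (\ref{moltcon2}) on the products $x^2,\,y^2,\,xy,\,yx$ prescribed by Lemma~\ref{lemmaYDH2 algebra case 2}; here the reassociator $\Phi_2=1\ot1\ot1-2p_-\ot p_-\ot p_-$ and the coaction $\l(y)=(p_++\mfq^{2j+1}p_-)\ot y$ enter, the braiding contributing the self-intertwining scalar $\mfq^{2j+1}$ on $y\ot y$, exactly as in the proof of Lemma~\ref{lemmaYDH2 case 3}. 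Finally I would check coassociativity \equref{ydc2}, a linear condition that trims any leftover parameters.

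The computation should split cleanly according to whether $x$ is nilpotent ($x^2=0$) or invertible ($x^2=1$), and this dichotomy matches exactly the excluded versus the surviving algebras. For $B_{2o}^{00}$, $B_{0o}^{0}$ and $B_{0o}^{00}$ the evaluation of (\ref{moltcon2}) on a vanishing square forces a relation $(1+c)\,(\text{monomial})=0$, where $c$ is the relevant self-braiding scalar: for the $y$-square one has $c=\mfq^{2j+1}$ and for the $x$-square $c=1$, and since $1+\mfq^{2j+1}\ne0$ (as $\mfq^{2j+1}=\pm\mfq\ne-1$) and $1+1=2\ne0$ in characteristic zero, no compatible comultiplication exists. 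For $B_{0o}^{10}$ and $B_{0o}^{10c}$, instead, the invertibility $x^2=1$ makes the $x\ot y$ and $y\ot x$ terms of $\un{\Delta}_B(y)$ effective, and their contributions in (\ref{moltcon2}) cancel precisely this obstruction; solving the remaining linear system should leave exactly the two structures \equref{1braidbialgB0o10}, \equref{2braidbialgB0o10} on $B_{0o}^{10}$ and the single structure \equref{3braidbialgB0o10c} on $B_{0o}^{10c}$, independently of $j$ up to isomorphism.

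I expect the main obstacle to be the honest evaluation of (\ref{moltcon2}): because $x$ and $y$ lie in different isotypic components and $\Phi_2$ acts nontrivially only through its $p_-\ot p_-\ot p_-$ part, each of the summands must be tracked separately, and one must keep the dependence on $i$ and $j$ visible long enough to confirm both the exclusions and the final list of three braided bialgebras.
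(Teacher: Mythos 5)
Your overall strategy---lift Lemma \ref{lemmaYDH2 algebra case 2} to the bialgebra level, constrain $\un{\va}_B$ and $\un{\Delta}_B$ by $H$-(co)linearity and the counit axioms, then impose multiplicativity via (\ref{moltcon2}) and coassociativity via \equref{ydc2}---is indeed the paper's strategy, but one of your key reductions is false, and it deletes exactly the hardest branch of the case analysis. You claim that the colinearity \equref{ydc3} forces the $y\ot y$-coefficient of $\un{\Delta}_B(x)$ to vanish because ``$y\ot y$ carries the nontrivial $H$-coaction $g$''. It does not: writing $\l(y)=\Gamma\ot y$ with $\Gamma=p_++\mfq^{2j+1}p_-$, the naive contribution to the coaction of $y\ot y$ is $\Gamma^2=p_+-p_-=g$, but in the tensor-product coaction \equref{y4} each of the three reassociator factors contributes a further $g$ (for the sign character $\chi$ of $C_2$ one has $\chi(X^2)\chi(X^3)X^1=1-2p_-=g$, and likewise for the other two copies of $\Phi^{\pm1}$), so the total coefficient is $g^3\,\Gamma^2=g\cdot g=1$. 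Thus $k(y\ot y)$ is a copy of $M_0$ inside $B\ot B$ and is perfectly admissible in $\un{\Delta}_B(x)$; the paper's own proof keeps this term and even observes that \equref{ydc3} imposes no condition at all in the relevant case. Consequently your reduction of $k1\oplus kx$ to a $2$-dimensional sub-bialgebra is unjustified, and you silently skip the branch $\sigma_{y,y}\neq 0$ (Case 1 of the paper's proof). That branch is not vacuous: it produces on the algebra $B_{0o}^{0}$ (with $x^2=0$, $y^2=x$, $xy=yx=0$) the structure $\un{\Delta}(x)=1\ot x+x\ot 1+(1+\mfq^{2j+1})y\ot y$, $\un{\Delta}(y)=1\ot y+y\ot 1$, which is counital, coassociative, $H$-linear and $H$-colinear, and multiplicative on \emph{both squares} (one computes $\un{\Delta}(x)\un{\Delta}(x)=0=\un{\Delta}(x^2)$).

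This also breaks your proposed exclusion mechanism. You assert that the algebras with vanishing squares are killed by a factor $(1+c)$ arising on the relevant square; but for $B_{0o}^{0}$ all squares are compatible with $\un{\Delta}$, and the paper eliminates this candidate only at the mixed products, since $\un{\Delta}(x)\un{\Delta}(y)=\mfq^{2j+1}(x\ot y-y\ot x)\neq 0=\un{\Delta}(xy)$; your sketch never evaluates (\ref{moltcon2}) on $xy$ and $yx$ in this branch, so it would wrongly let (or wrongly dismiss for the wrong reason) this case. Similarly, the disposal of $B_{0o}^{00}$ in the paper requires a genuine subcase analysis comparing $\un{\Delta}(x)\un{\Delta}(x)$ with $\un{\Delta}(x^2)=0$, not a single $(1+1)$-obstruction. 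Finally, even where your outline lands on the correct list, the statement implicitly requires the three structures to be pairwise non-isomorphic---(i) and (ii) live on the \emph{same} algebra $B_{0o}^{10}$---and the paper closes with an explicit argument (any isomorphism $\psi$ must send $x$ to $\pm 1$ or $\pm x+cy$ and $y$ to $c'y$, and then $\psi(xy)=\psi(x)\psi(y)$ forces $c'=0$); your proposal omits this step entirely.
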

\begin{proof}   
		We use the results obtained in Lemma \ref{lemmaYDH2 algebra case 2}; for simplicity, we consider the generators of the braided bialgebra $B$ as being $x, y$, rather that $\bar x, \bar y$. Also, we set $\un{\Delta}_B(x)=\sum_{p,q\in\{1,x,y\}}\alp_{p,q}~p\ot q$ and $\un{\Delta}_B(y)=\sum_{p,q\in\{1,x,y\}}\tau_{p,q}~p\ot q$.
		
Since $g\cdot y=-1$, by the second relation in (\ref{YDcoal con1}) we obtain $\un{\va}_B(y)=0$. The equality $-\un{\Delta}(y)=\un{\Delta}(g\cdot y)=
(g\ot g)\un{\Delta}(y)$ implies $\tau_{1, 1}=\tau_{x, x}=\tau_{y, y}=0$ and $(1+(-1)^i)\tau_{1, x}=(1+(-1)^i)\tau_{x, 1}=0$, 
$(1+(-1)^{i+1})\tau_{x, y}=(1+(-1)^{i+1})\tau_{y, x}=0$. We use now the counit properties to get $\tau_{1, x}=\tau_{x, 1}=0$ 
and $\tau_{y, 1} +\un{\va}(x)\tau_{y, x}=1=\tau_{1, y}+ \un{\va}(x)\tau_{x, y}$. Summing up, we have 
\[
\un{\Delta}(y)=\tau_{1, y}1\ot y + \tau_{x, y}x\ot y + \tau_{y, 1} y\ot 1 + \tau_{y, x}y\ot x, 
\] 
with $(1+(-1)^{i+1})\tau_{x, y}=(1+(-1)^{i+1})\tau_{y, x}=0$ and $\tau_{y, 1} +\un{\va}(x)\tau_{y, x}=1=\tau_{1, y}+ \un{\va}(x)\tau_{x, y}$.

Denote $\un{\Delta}(y)=y_{\un{1}}\ot y_{\un{2}}=y'_{\un{1}}\ot y'_{\un{2}}$. With the help of (\ref{moltcon2}) we compute that 
\begin{eqnarray*}
\un{\Delta}(y^2)&=&(y^1X^1\cdot y_{\un{1}})(y^2Y^1(x^1X^2\cdot y_{\un{2}})_{[-1]}x^2X^3_1\cdot y'_{\un{1}})\ot 
(y^3_1Y^2\cdot (x^1X^2\cdot y_{\un{2}})_{[0]})(y^3_2Y^3x^3X^3_2\cdot y'_{\un{2}})\\
&=&\tau_{1, y}Y^1(x^1\cdot y)_{[-1]}x^2\cdot y'_{\un{1}}\ot (Y^2\cdot (x^1\cdot y)_{[0]})(Y^3x^3\cdot y'_{\un{2}})\\
&& 
+\tau_{x, y}x(Y^1(x^1\cdot y)_{[-1]}x^2\cdot y'_{\un{1}})\ot (Y^2\cdot (x^1\cdot y)_{[0]})(Y^3x^3\cdot y'_{\un{2}})\\
&&
+\tau_{y, 1}(y^1\cdot y)(y^2\cdot y'_{\un{1}})\ot y^3\cdot y'_{\un{2}} 
+\tau_{y, x}(y^1\cdot y)(y^2Y^1x_{[-1]}\cdot y'_{\un{1}})\ot (y^3_1Y^2\cdot x_{[0]})(y^3_2Y^3\cdot y'_{\un{2}})\\
&=&
\tau_{1, y}^2 1\ot y^2 + \tau_{1, y}\tau_{x, y}y_{[-1]}\cdot x\ot y_{[0]}y + \tau_{1, y}\tau_{y, 1}y_{[-1]}\cdot y\ot y_{[0]}
+ \tau_{1, y}\tau_{y, x}y_{[-1]}\cdot y\ot y_{[0]}x\\
&&+
\tau_{x, y}\tau_{1, y}x\ot y^2 + \tau_{x, y}^2x(y_{[-1]}\cdot x)\ot y_{[0]}y
+ \tau_{x, y}\tau_{y, 1}x(y_{[-1]}\cdot y)\ot y_{[0]}\\
&& 
+ \tau_{x, y}\tau_{y, x} x(y_{[-1]}\cdot y)\ot y_{[0]}x 
+ \tau_{y, 1}\tau_{1, y}y\ot y +\tau_{y, 1}\tau_{x, y}yx\ot y + \tau_{y, 1}^2y^2\ot 1\\
&& 
+ \tau_{y, 1}\tau_{y, x}y^2\ot x
+ \tau_{y, x}\tau_{1, y}y\ot xy + \tau_{y, x}\tau_{x, y} y(x_{[-1]}\cdot x)\ot x_{[0]}y\\
&& 
+ \tau_{y, x}\tau_{y, 1}(y^1\cdot y)(y^2x_{[-1]}\cdot y)\ot y^3\cdot x_{[0]} 
+ \tau^2_{y, x}(y^1\cdot y)(y^2x_{[-1]}\cdot y)\ot y^3\cdot (x_{[0]}x)\\
&=&
\tau_{1, y}^2 1\ot y^2 +\tau_{1, y}\tau_{x, y}\big(1 + \frac{1+(-1)^i}{2} + \mfq^{2j+1}\frac{1-(-1)^i}{2}\big)x\ot y^2 
+(1+\mfq^{2j+1})\tau_{1, y}\tau_{y, 1}y\ot y\\
&&
+ 2\tau_{y, 1}\tau_{y, x}y^2\ot x + \tau_{x, y}^2\big(\frac{1+(-1)^i}{2} +\mfq^{2j+1}\frac{1-(-1)^i}{2}\big)x^2\ot y^2
+ (-1)^i\tau_{y, x}^2y^2\ot x^2\\
&&
+\tau_{y, 1}^2y^2\ot 1 + \tau_{y, 1}\tau_{x, y}(\gamma\mfq^{2j+1} + \ov{\gamma})y\ot y
+ \tau_{1, y}\tau_{y, x}(\ov{\gamma}\mfq^{2j+1} + \gamma)y\ot y\\
&&
+\tau_{x, y}\tau_{y, x}\gamma\ov{\gamma}(1+\mfq^{2j+1})y\ot y.
\end{eqnarray*}

If $i=1$, $\tau_{x, y}=\tau_{y, x}=0$ and $\tau_{1, y}=\tau_{y, 1}=1$. So 
$\un{\Delta}(y^2)=1\ot y^2 + (1+\mfq^{2j+1})y\ot y + y^2\ot 1$, and this forces $y^2\not=0$, a contradiction. 
We conclude that $i=0$, and therefore 
\begin{eqnarray*}
&&\hspace{-5mm}
\un{\Delta}(y^2)=
\tau_{1, y}^2 1\ot y^2 +2\tau_{1, y}\tau_{x, y}x\ot y^2 
+(1+\mfq^{2j+1})\tau_{1, y}\tau_{y, 1}y\ot y
+ 2\tau_{y, 1}\tau_{y, x}y^2\ot x + \tau_{x, y}^2x^2\ot y^2\\
&&
\hspace{5mm}
+ \tau_{y, x}^2y^2\ot x^2
+\tau_{y, 1}^2y^2\ot 1 + \tau_{y, 1}\tau_{x, y}(\gamma\mfq^{2j+1} + \ov{\gamma})y\ot y 
+ \tau_{1, y}\tau_{y, x}(\ov{\gamma}\mfq^{2j+1} + \gamma)y\ot y\\
&&
\hspace{5mm}
+\tau_{x, y}\tau_{y, x}\gamma\ov{\gamma}(1+ \mfq^{2j+1})y\ot y.
\end{eqnarray*}

Similarly, by using the fact that $g\cdot x=x$ and $g\cdot y=-y$, we see that 
\[
\un{\Delta}(x)=\alp_{1, 1}1\ot 1 + \alp_{1, x}1\ot x + \alp_{x, 1}x\ot 1 + \alp_{x, x}x\ot x + \alp_{y, y}y\ot y
\]
with $\alp_{1, 1}=-\un{\va}(x)\alp_{1, x}$ and $\alp_{1, x}=\alp_{x, 1}=1-\un{\va}(x)\alp_{x, x}$. 

Owing to the above relations, the coassociativity of $\un{\Delta}$ on $x$ (see \equref{ydc2}) reduces to 
\[
\sigma_{y, y}\sigma_{x, 1}=\sigma_{y,y}\tau_{y, 1}=\sigma_{y, y}\tau_{1, y},~~\sigma_{x, x}\sigma_{y, y}=\sigma_{y, y}\tau_{x, y}=\sigma_{y, y}\tau_{y, x}.
\]
Likewise, the coassociativity of $\un{\Delta}$ on $y$ is equivalent to 
\[
\tau_{y, 1}\tau_{y, x}=\tau_{y, x}\sigma_{x, 1},~\sigma_{x, 1}\tau_{x, y}=\tau_{1, y}\tau_{x, y},~\tau_{x, y}^2=\sigma_{x, x}\tau_{x, y},~
\tau_{y, x}^2=\tau_{y, x}\sigma_{x, x},~\sigma_{y, y}\tau_{y, x}=-\sigma_{y, y}\tau_{x, y}.
\]

\un{Case 1}: $\sigma_{y, y}\not=0$. Then $\tau_{x, y}=\tau_{y, x}=\sigma_{x, x}=0$, $\tau_{1,y}=\tau_{y, 1}=\sigma_{x, 1}=\sigma_{1, x}=1$ and $\sigma_{1, 1}=-\un{\va}(x)$. 
Thus, $\Delta(y)=1\ot y + y\ot 1$ and $\Delta(y^2)=1\ot y^2 + y^2\ot 1 + (1+\mfq^{2j+1})y\ot y$. We cannot have $y^2=0$, so 
$y^2=x$. Consequently, $\un{\Delta}(x)=1\ot x + x\ot 1 + (1+\mfq^{2j+1})y\ot y$; together with $\un{\Delta}(x)=\sigma_{1, 1}1\ot 1 + 1\ot x + x\ot 1 
+ \sigma_{y, y}y\ot y$, this implies $\sigma_{1, 1}=\un{\va}(x)=0$ and $\sigma_{y, y}=1+\mfq^{2j+1}\not=0$. 

Summarizing, $B=B^{0}_{0o}$ as an algebra in ${}_{\htw}^{\htw}{\cal YD}$, and is a coalgebra in ${}_{\htw}{\cal M}$ 
with structure determined by 
\begin{eqnarray*}
&&\un{\Delta}(x)=x\ot 1 + 1\ot x + (1+\mfq^{2j+1})y\ot y,~ \un{\va}(x)=0,\\
&&\un{\Delta}(y)=1\ot y + y\ot 1,~\un{\va}(y)=0.
\end{eqnarray*}
Moreover, $\un{\Delta}(y^2)=\un{\Delta}(y)\un{\Delta}(y)$. Similar to the computation of $\un{\Delta}(y^2)$ in the above, one can find that 
\begin{eqnarray*}
&&\hspace{-7mm}\un{\Delta}(x)\un{\Delta}(x)\\
&=&xx_{\un{1}}\ot x_{\un{2}} + 
x_{\un{1}}\ot xx_{\un{2}} +\sigma_{y, y}(y^1\cdot y)(y^2Y^1(x^1\cdot y)_{[-1]}x^2\cdot x_{\un{1}})\ot (y^3_1Y^2\cdot (x^1\cdot y)_{[0]})
(y^3_2Y^3x^3\cdot x_{\un{2}})\\
&=&2x\ot x +\sigma_{y, y}y(y_{[-1]}\cdot x)\ot y_{[0]} + \sigma_{y, y}y\ot yx+ \\
&&
\sigma_{y, y}^2(y^1\cdot y)(y^2Y^1(x^1\cdot y)_{[-1]}x^2\cdot y)
\ot (y^3_1Y^2\cdot (x^1\cdot y)_{[0]})(y^3_2Y^3x^3\cdot y)\\
&=&2x\ot x - 2\mfq^{2j+1}(y^1\cdot y)(y^2Y^1y_{[-1]}\cdot y)\ot (y^3_1Y^2\cdot y_{[0]})(y^3_2Y^3\cdot y)\\
&=&2x\ot x + 2(y^1\cdot y)(y^2Y^1\cdot y)\ot y^3\cdot ((Y^2\cdot y)(Y^3\cdot y))\\
&=&2x\ot x - 2(y^1\cdot y)(y^2\cdot y)\ot y^3\cdot x=2x\ot x - 2y^2\ot x=0, 
\end{eqnarray*}
and therefore $\un{\Delta}(x^2)=\un{\Delta}(x)\un{\Delta}(x)$. Likewise, $\un{\Delta}(x)\un{\Delta}(y)=\mfq^{2j+1}(x\ot y -y\ot x)$ and 
$\un{\Delta}(y)\un{\Delta}(x)=\mfq^{2j+1}(y\ot x -x\ot y)$, so $\un{\Delta}$ is not multiplicative.

\un{Case 2}: $\sigma_{y, y}=0$. We start by noting that \equref{ydc3} is satisfied by $x, y$, so we will pay attention only to the other 
remaining relations. We distinguish two possibilities.

\un{21}: $\tau_{x, y}=0$. We reduce to $\tau_{1, y}=1$, $\tau_{yx}^2=\tau_{yx}\sigma_{xx}$ and $\tau_{y, 1}\tau_{y, x}=\tau_{y, x}\sigma_{x, 1}$. Furthermore, 
we cannot have $\tau_{y, x}=0$. Indeed, for $\tau_{y, x}=0$ we have $\tau_{y, 1}=1$ and $\un{\Delta}(y)=1\ot y+ y\ot 1$; thus $\un{\Delta}(y)\un{\Delta}(y)=
1\ot y^2 + y^2\ot 1 + (1+\mfq^{2j+1})y\ot y$, which implies $y^2\not=0$, and so $y^2=x$. We obtain that $\un{\Delta}(x)=\Delta(y^2)=
1\ot x + x\ot 1 + (1+\mfq^{2j+1})y\ot y$, hence $\sigma_{1, 1}=0$, $\sigma_{1, x}=\sigma_{x, 1}=1$ and $\sigma_{x, x}=1+\mfq^{2j+1}\not=0$. We get 
$\un{\va}(x)=0$, and this contradicts the fact that $x^2=1$. 

Thus, we have in this case that $\tau_{y, 1}=\sigma_{x, 1}=\sigma_{1, x}:=a$, $\tau_{y, x}=\sigma_{x, x}:=b$ and $\sigma_{1, 1}=-\un{\va}(x)a$, with 
$a+\un{\va}(x)b=1$, $b\not=0$. If $y^2=x$, $\gamma=\ov{\gamma}=0$ and $x^2=0$, from the general formula of $\un{\Delta}(y^2)$ computed above we deduce that 
$\un{\Delta}(x)=\un{\Delta}(y^2)=1\ot x + a(1+\mfq^{2j+1})y\ot y +2ab x\ot x + a^1x\ot 1$. Comparing with 
$\un{\Delta}(x)=\sigma_{1, 1}1\ot 1 + a1\ot x+ ax\ot 1 +bx\ot x$, we arrive at $\sigma_{1, 1}=0$, $a=0$, $a=1$ and $2ab=b$, which is impossible to achieve. 

We conclude that $y^2=0$, and by using again the formula for $\un{\Delta}(y^2)$ found above, 
we land at $a(1+\mfq^{2j+1}) + b(\ov{\gamma}\mfq^{2j+1} +\gamma)=0$. 

Suppose we are dealing with $B_{0o}^{00}$. Then $a=-b$ and $\sigma_{1, 1}=\un{\va}(x)b$. We compute that 
\begin{eqnarray*}
\un{\Delta}(x)\un{\Delta}(x)&=&\sigma_{1, 1}x_{\un{1}}\ot x_{\un{2}} - bx_{\un{1}}\ot x_{\un{2}} - bx_{\un{1}}\ot xx_{\un{2}} + bxx_{\un{1}}\ot xx_{\un{2}}\\
&=&\sigma_{1, 1}^21\ot 1 - 2b\sigma_{1, 1}1\ot x - 2b\sigma_{1, 1}x\ot 1 + (2b^2 + b\sigma_{1, 1})x\ot x,  
\end{eqnarray*}
which forces $\sigma_{1, 1}=0$ and $b=0$, a contradiction.

The only possibility left is $B=B_{0o}^{10}$, when we have $a=\mfq^{2j+1}b$ and 
\[
\un{\Delta}(x)\un{\Delta}(x)=(\sigma_{1, 1}^2-b^2)1\ot 1 + 2\mfq^{2j+1}(b+\sigma_{1, 1})(1\ot x + x\ot 1) - 2b(b - \sigma_{1, 1})x\ot x.
\]
As $x^2=1$ and $b\not=0$, $\sigma_{1, 1}^2=b^2 + 1$ and $\sigma_{1, 1}=b=0$; a contradiction. So this case is not possible.

\un{22}: $\tau_{x, y}\not=0$. Then $\sigma_{x, x}=\tau_{x, y}\not=0$, $\tau_{1, y}=\sigma_{x, 1}=\sigma_{1, x}$, $\tau_{y, x}^2=\tau_{y, x}\sigma_{x, x}$ and 
$\tau_{y, 1}\tau_{y, x}=\tau_{y, x}\sigma_{x, 1}$.   

$\un{22}_1$: $\tau_{y, x}=0$. We have $\tau_{y, 1}=1$, $\tau_{1, y}=\sigma_{x, 1}=\sigma_{1, x}:=a$ and $\tau_{x, y}=\sigma_{x, x}=b\not=0$. Therefore,
\begin{eqnarray*}
&&\un{\Delta}(x)=\sigma_{1, 1}1\ot 1 + a1\ot x + ax\ot 1 + bx\ot x,\\
&&\un{\Delta}(y)=a1\ot y + y\ot 1 + bx\ot y.
\end{eqnarray*}
In a manner similar to the one above, we compute that 
\begin{eqnarray*}
\un{\Delta}(x)\un{\Delta}(x)\hspace{-1mm}&=&\hspace{-1mm}
\sigma_{1, 1}1\ot 1+ax_{\un{1}}\ot xx_{\un{2}}+ axx_{\un{1}}\ot x_{\un{2}}+bxx_{\un{1}}\ot xx_{\un{2}}\\
\hspace{-1mm}&=&\hspace{-1mm}
\sigma_{1, 1}^21\ot 1 + 2a\sigma_{1, 1}1\ot x + 2a\sigma_{1, 1}x\ot 1 + 2(a^2+b\sigma_{1, 1})x\ot x + a^21\ot x^2\\
&& 
+ 2abx\ot x^2 + a^2x^2\ot 1 + 2abx^2\ot x + b^2x^2\ot x^2.
\end{eqnarray*}

$\bullet$ If $x^2=0$, $\sigma_{1, 1}=a=0$, so $\un{\Delta}(y)=y\ot 1 + bx\ot y$ and 
$\un{\Delta}(y)\un{\Delta}(y)=y^2\ot 1 + b(\gamma\mfq^{2j+1}+\ov{\gamma})y\ot y$. 
We cannot have $y^2=0$, otherwise $\gamma\mfq^{2j+1}+\ov{\gamma}=0$, which is false for any scalars $\overline\gamma$, $\gamma$. 
Thus $y^2=x$, and since $ \un\Delta(x)=b x\ot x$, we get again a contradiction.

$\bullet$ If $x^2=1$, $a(b+\sigma_{1, 1})=0$, $\sigma_{1, 1}^2 + b^2 + 2a^2=1$ and $a^2+ b\sigma_{1, 1}=0$. Since $y^2=0$, 
$a(1+\mfq^{2j+1}) + b(\mfq^{2j+1}-1)=0$ or, equivalently, $a=-\mfq^{2j+1}b$. Then, as $xy=y$,  
\begin{eqnarray*}
\un{\Delta}(y)&=&\un{\Delta}(x)\un{\Delta}(y)=\sigma_{1, 1}y_{\un{1}}\ot y_{\un{2}} + ay_{\un{1}}+xy_{\un{2}}+axy_{\un{1}}\ot y_{\un{2}}+
bxy_{\un{1}}\ot xy_{\un{2}}\\
&=&(a^2+b^2+ab+b\sigma_{1, 1})1\ot y + (a+\sigma_{1, 1})y\ot 1 + (a^2+2ab+b\sigma_{1, 1})x\ot y + (a+b)y\ot x.
\end{eqnarray*}
Consequently, $b=-a=b\mfq^{2j+1}$, so $b=0$, which is not the case.

$\un{22}_2$: $\tau_{y, x}\not=0$. We have $\tau_{y, x}=\sigma_{x, x}=\tau_{x, y}:=b\not=0$ and 
$\tau_{1, y}=1-\un{\va}(x)\tau_{x, y}=1-\un{\va}(x)\tau_{y, x}=\tau_{y, 1}=\sigma_{x, 1}=\sigma_{1, x}:=a$. Therefore,
\begin{eqnarray*}
&&\un{\Delta}(x)=\sigma_{1, 1}1\ot 1 + a1\ot x + ax\ot 1 + bx\ot x,\\
&&\un{\Delta}(y)=a1\ot y + ay\ot 1 + bx\ot y + by\ot x.
\end{eqnarray*}
The formula for $\un{\Delta}(x)$ is the same as in the subcase $\un{22}_1$. Thus, assuming $x^2=0$, we get $\sigma_{1, 1}=a=0$ and 
$\un{\Delta}(y)\un{\Delta}(y)=b^2\gamma\ov{\gamma}(1+\mfq^{2j+1})y\ot y$. We cannot have $y^2=0$, otherwise, $\gamma=\ov{\gamma}=1$ and $b=0$. 
It follows that $y^2=x$, implying $\gamma=\ov{\gamma}=0$ and $\Delta(x)=\Delta(y)\Delta(y)=0$. So, $b=0$ which is false. 

Hence, necessarily, $x^2=1$, and this fact forces $a(b+\sigma_{1, 1})=0$, $2a^2+ b^2 +\sigma_{1, 1}^2=1$ and $a^2+b\sigma_{1, 1}=0$. 
In particular, we deal with $B_{0o}^{10}$ or $B_{0o}^{10c}$, hence $y^2=0$, $xy=y$ and $yx=\overline\gamma y$ 
with $\ov{\gamma}$ either $1$ or $-1$. The fact that $0=\un{\Delta}(y^2)=\un{\Delta}(y)\un{\Delta}(y)$ 
is equivalent to $a^2=b^2$ if $\overline\gamma=-1$ and to $a=-b$ if $\overline\gamma=1$. Then, by $\un{\Delta}(y)=\un{\Delta}(xy)$ and 
\begin{eqnarray*}
\un{\Delta}(x)\un{\Delta}(y)&=&\sigma_{1, 1}y_{\un{1}}\ot y_{\un{2}}+ ay_{\un{1}}\ot xy_{\un{2}} + axy_{\un{1}}\ot y_{\un{2}}
+ bxy_{\un{1}}\ot xy_{\un{2}}\\
&=&(a\sigma_{1, 1}+ a^2 + ab + b^2)(1\ot y + y\ot 1) + (a^2+2ab+b\sigma_{1, 1})(x\ot y + y\ot x), 
\end{eqnarray*}
we obtain that $a\sigma_{1, 1}+ a^2 + ab + b^2=a$ and $a^2+2ab+b\sigma_{1, 1}=b$. Similarly, $\overline\gamma\un{\Delta}(y)=\un{\Delta}(yx)$ and 
\begin{eqnarray*}
\un{\Delta}(y)\un{\Delta}(x)&=&\sigma_{1, 1}y_{\un{1}}\ot y_{\un{2}} + ay_{\un{1}}\ot y_{\un{2}}x + ay_{\un{1}}((y_{\un{2}})_{[-1]}\cdot x)\ot (y_{\un{2}})_{[0]}
+ by_{\un{1}}((y_{\un{2}})_{[-1]}\cdot x)\ot (y_{\un{2}})_{[0]}x\\
&=&(a\sigma_{1, 1} +\overline\gamma a^2 + ab +\overline\gamma b^2)(1\ot y + y\ot 1) + (b\sigma_{1, 1} +\overline\gamma 2ab + a^2)(x\ot y + y\ot x)
\end{eqnarray*}
entails to $a\sigma_{1, 1} +\overline\gamma a^2 + ab +\overline\gamma b^2=\overline\gamma a$ and $b\sigma_{1, 1} +\overline\gamma 2ab + a^2=\overline\gamma b$. The eight relations that guarantee a braided bialgebra structure on 
$B_{0o}^{10}$ or $B_{0o}^{10c}$ take place simultaneously in the following cases: 

$\bullet$ For $B_{0o}^{10}$, if $a=b\not=0$ then they reduce 
to $\sigma_{1, 1}=-a$, $4a^2=1$ and $2a^2=a$. Thus, $\sigma_{1, 1}=-\frac{1}{2}$ and $a=b=\frac{1}{2}$, leading 
to the braided bialgebra structure on $B_{0o}^{10}$ stated in \equref{1braidbialgB0o10}.  

$\bullet$ For $B_{0o}^{10}$, if $a=-b\not=0$, the above mentioned eight relations reduce to $\sigma_{1, 1}=a$, $4a^2=1$ and $2a^2=a$. Therefore, 
$\sigma_{1, 1}=a=\frac{1}{2}$ and $b=-\frac{1}{2}$, leading to the second braided bialgebra structure on $B_{0o}^{10}$ stated 
in \equref{2braidbialgB0o10}. 

$\bullet$ For $B_{0o}^{10c}$, these relations entail to 
$\sigma_{1, 1}=a=\frac{1}{2}$ and $b=-\frac{1}{2}$. We then get the last braided bialgebra in statement; namely, the one described in \equref{3braidbialgB0o10c}. 

 It remains to show that the three braided bialgebra structures that we just obtained are not isomorphic. Assume the opposite, and take 
$\psi$ a braided bialgebra isomorphism with target bialgebra in (ii) and source bialgebra in (i) or (iii); then $\psi(x)^2=1$ and $\psi(y)^2=0$. 
For $z=a1+bx+cy\in B_{0o}^{10}$ we have $z^2=(a^2+b^2)1+2abx+bc(xy+yx)+2ac=(a^2+b^2)1+2abx+2ac$. Thus, $z^2=1$ if and only if $z\in \{\pm 1, \pm x+cy\}$, and 
$z^2=0$ if and only if $z=c'y$. If the source of $\psi$ is, let's say (i), owing to the definition of the counits we 
get that $\psi(x)=-x+cy$ and $\psi(y)=c'y$, $c'\not=0$. 
Then $\psi(y)=\psi(xy)=\psi(x)\psi(y)$ is equivalent to $c'y=-c'y$, so $c'=0$, and this is false. Likewise, if the source of $\psi$ is 
(iii) then $\psi(x)=x+cy$ and $\psi(y)=c'y$, $c'\not=0$. Thus $c'y=\psi(y)=\psi(yx)=\psi(y)\psi(x)=c'y(x+cy)$ in 
$B_{00}^{10}$, so $c'y=-c'y$. We get $c'=0$, a contradiction.
\end{proof}

If $(B, \un{m}_B, \un{\eta}_B, \un{\Delta}_B, \un{\va}_B)$ is a bialgebra in a braided category $(\Cc, c)$ so is $B^{op+, cop-}$, 
the same object $B$ equipped with the multiplication 
$\un{m}_Bc_{B, B}$, unit $\un{\eta}_B$, comultiplication $c^{-1}_{B, B}\un{\Delta}_B$ and counit $\un{\va}_B$. 

\begin{remarks}
1). For the bialgebra $B_{0o}^{10}$ in (i) above, set $\ov{x}:=\frac{1}{2}(1+x)$. One can see easily that 
\begin{eqnarray}
&&\ov{x}^2=\ov{x},~y^2=0,~\ov{x}y=y,~y\ov{x}=0,\nonumber\\
&&\un{\Delta}(\ov{x})=\ov{x}\ot \ov{x},~\un{\va}(\ov{x})=1,\eqlabel{secbraidedbialg}\\
&&\un{\Delta}(y)=\ov{x}\ot y + y\ot \ov{x},~\un{\va}(y)=0.\nonumber
\end{eqnarray}
We denote by $\un{B}$ the braided bialgebra copy of $B_{0o}^{10}$ in (i), defined by the relations in \equref{secbraidedbialg}.

In the same spirit, for the bialgebra $B_{0o}^{10}$ in (ii) above, set $\ov{x}:=\frac{1}{2}(1-x)$. Then 
\begin{eqnarray}
&&\ov{x}^2=\ov{x},~y^2=0,~\ov{x}y=0,~y\ov{x}=y,\nonumber\\
&&\un{\Delta}(\ov{x})=\ov{x}\ot \ov{x},~\un{\va}(\ov{x})=1,\eqlabel{thirdbraidedbialg}\\
&&\un{\Delta}(y)=\ov{x}\ot y + y\ot \ov{x},~\un{\va}(y)=0.\nonumber
\end{eqnarray}
We denote by $\un{\un{B}}$ the braided bialgebra copy of $B_{0o}^{10}$ in (ii), defined by the relations in \equref{thirdbraidedbialg}. 
Observe that $\un{\un{B}}=\un{B}^{op+, cop-}$ as braided bialgebras. 

Finally, for the bialgebra $B_{0o}^{10c}$ in (iii) above, set again $\ov{x}:=\frac{1}{2}(1-x)$. Then 
\begin{eqnarray}
&&\ov{x}^2=\ov{x},~y^2=0,~\ov{x}y=0,~y\ov{x}=0,\nonumber\\
&&\un{\Delta}(\ov{x})=\ov{x}\ot \ov{x},~\un{\va}(\ov{x})=1,\eqlabel{fourthbraidedbialg}\\
&&\un{\Delta}(y)=\ov{x}\ot y + y\ot \ov{x},~\un{\va}(y)=0.\nonumber
\end{eqnarray}
We denote by $\un{\un{B}}^c$ the braided bialgebra copy of $B_{0o}^{10c}$ in (iii), defined by the relations in \equref{fourthbraidedbialg}. 
Note that $\un{\un{B}}^c=(\un{\un{B}}^c)^{op+, cop-}$ as braided bialgebras. 

2). Neither $\un{B}$, $\un{\un{B}}$ nor $\un{\un{B}}^c$ is a braided Hopf algebra, we leave it to the reader to verify this fact. As a consequence, 
there are no braided Hopf algebras within ${}_\htw^\htw {\cal YD}$, which as objects in ${}_\htw^\htw {\cal YD}$ are isomorphic 
to $M_0\oplus M_{2i}\oplus M_{2j+1}$.
\end{remarks}

\begin{proposition}\prlabel{6 dim quasi-bialgebras}
(i) The biproduct quasi-bialgebra $\un{H}:=\un{B}\times \htw$ can be described as follows. As an algebra, it is unital and generated by 
$G=1\times g$, $X=\ov{x}\times 1$ and $Y=y\times 1$ with relations $G^2=1$, $X^2=X$, $Y^2=0$, $GX=XG$, $GY=-YG$, $XY=Y$ and $YX=0$. The quasi-coalgebra  
structure of $\un{H}$ is determined by the reassociator $\Phi=1-2P_-\ot P_-\ot P_-$, where $P_\pm:=\frac{1}{2}(1\pm G)$, the comultiplication 
\begin{equation}
\Delta(G)=G\ot G,~\Delta(X)=X\ot X,~\Delta(Y)=(P_+ +\mfq^{2j+1}P_-)X\ot Y + Y\ot P_+X - GY\ot P_-X,
\end{equation}
and counit $\va(G)=\va(X)=1$ and $\va(Y)=0$. 

(ii) Similarly, the biproduct quasi-bialgebra $\un{\un{H}}:=\un{\un{B}}\times \htw$ is the unital algebra generated by $G:=1\times g$, $X=1\times x$ 
and $Y=1\times y$ with relations $G^2=1$, $X^2=X$, $Y^2=0$, $GX=XG$, $GY=-YG$, $XY=0$ and $YX=Y$. Its quasi-bialgebra structure is given by the reassociator 
$\Phi=1-2P_-\ot P_-\ot P_-$, comultiplication
\[
\Delta(G)=G\ot G,~\Delta(X)=X\ot X,~\Delta(Y)=(P_+ +\mfq^{2j+1}P_-)X\ot Y + Y\ot P_+X - GY\ot P_-X
\]  
and counit $\va(G)=\va(X)=1$ and $\va(Y)=0$, where $P_\pm:=\frac{1}{2}(1\pm G)$. $\un{\un{H}}$ is the op-version of $\un{H}$.  

(iii) Last but not least, $\un{\un{H}}^c:=\un{\un{B}}^c\times \htw$ is the unital algebra generated by $G:=1\times g$, $X=1\times x$ 
and $Y=1\times y$ with relations $G^2=1$, $X^2=X$, $Y^2=0$, $GX=XG$, $GY=-YG$ and $XY=YX=0$. Its quasi-bialgebra structure is given by the reassociator 
$\Phi=1-2P_-\ot P_-\ot P_-$, comultiplication
\[
\Delta(G)=G\ot G,~\Delta(X)=X\ot X,~\Delta(Y)=(P_+ +\mfq^{2j+1}P_-)X\ot Y + Y\ot P_+X - GY\ot P_-X
\]  
and counit $\va(G)=\va(X)=1$ and $\va(Y)=0$. As before, $P_\pm:=\frac{1}{2}(1\pm G)$. 
\end{proposition}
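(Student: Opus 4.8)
The plan is to push the braided bialgebra data of $\un{B}$, $\un{\un{B}}$ and $\un{\un{B}}^c$ --- recorded in \equref{secbraidedbialg}, \equref{thirdbraidedbialg} and \equref{fourthbraidedbialg} --- through the biproduct construction $B\times\htw$, reading off the algebra structure from the smash product multiplication \equref{smashp}, the reassociator from $1_B\ot X^1\ot 1_B\ot X^2\ot 1_B\ot X^3$, and the coalgebra structure from the smash product comultiplication \equref{comultsmashprodcoalg}, all specialized to $H=\htw$ with $\Phi_2=1-2p_-\ot p_-\ot p_-$ (which is its own inverse, as $p_-$ is idempotent). Every computation is governed by a short list of facts: $p_+\cd\ov{x}=\ov{x}$, $p_-\cd\ov{x}=0$ and $\l(\ov{x})=1\ot\ov{x}$ for the grouplike $\ov{x}$; $p_+\cd y=0$, $p_-\cd y=y$, $g\cd y=-y$ and $\l(y)=(p_++\mfq^{2j+1}p_-)\ot y$ for $y\simeq M_{2j+1}$ (Proposition~\ref{simp mod htw}); and $\va(p_+)=1$, $\va(p_-)=0$, which annihilates most of the contributions coming from the leg $-2p_-^{\ot 3}$ of $\Phi_2^{\pm1}$.

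First I would establish the relations by substituting the generators into \equref{smashp} and discarding the $p_-^{\ot 3}$-terms, which vanish either because $p_-\cd 1_B=\va(p_-)1_B=0$ or because $p_-\cd\ov{x}=0$. This gives $G^2=1_B\times g^2=1$ at once, $X^2=X$ and $Y^2=0$ from $\ov{x}^2=\ov{x}$ and $y^2=0$, and $GX=XG$, $GY=-YG$ from $g\cd\ov{x}=\ov{x}$ and $g\cd y=-y$. The products $XY$ and $YX$ are precisely the images of the braided products $\ov{x}y$ and $y\ov{x}$, so they encode the only discrepancy between the three cases: $\ov{x}y=y$, $y\ov{x}=0$ yield $XY=Y$, $YX=0$ for $\un{H}$; \equref{thirdbraidedbialg} yields $XY=0$, $YX=Y$ for $\un{\un{H}}$; and \equref{fourthbraidedbialg} yields $XY=YX=0$ for $\un{\un{H}}^c$. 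The reassociator is immediate: since $1_B\times p_-=\frac{1}{2}(1-G)=P_-$, the biproduct reassociator $1_B\ot X^1\ot 1_B\ot X^2\ot 1_B\ot X^3$ is exactly $\Phi=1-2P_-\ot P_-\ot P_-$ in all three cases.

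The coalgebra structure is where the real work lies. Both $G$ and $X$ turn out grouplike: since $1_B$ and $\ov{x}$ are grouplike in $B$ with trivial Yetter-Drinfeld structure, every action on them in \equref{comultsmashprodcoalg} reduces to a counit and the $\Phi_2^{\pm1}$-legs collapse by normalization, leaving $\Delta(G)=G\ot G$ and $\Delta(X)=X\ot X$. The heart of the matter is $\Delta(Y)$. Feeding $\un{\Delta}(y)=\ov{x}\ot y+y\ot\ov{x}$ into \equref{comultsmashprodcoalg}, the summand $\ov{x}\ot y$ routes the coaction $\l(y)=(p_++\mfq^{2j+1}p_-)\ot y$ into the middle $H$-leg, and after rewriting $\ov{x}\times(p_++\mfq^{2j+1}p_-)=(P_++\mfq^{2j+1}P_-)X$ this produces the summand $(P_++\mfq^{2j+1}P_-)X\ot Y$. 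The summand $y\ot\ov{x}$, whose second tensorand is coaction-trivial, contributes two further pieces once the surviving $H$-leg is split along $1_H=p_++p_-$; these assemble into $Y\ot P_+X-GY\ot P_-X$, the sign and the grouplike factor $G$ being forced by $g\cd y=-y$ together with the remaining $p_-$-legs of $\Phi_2^{\pm1}$. Collecting the three pieces gives the claimed $\Delta(Y)$, while $\va(Y)=\un{\va}_B(y)\va(1)=0$ and $\va(G)=\va(X)=1$ are immediate from $\va(\mf{b}\tie h)=\un{\va}_B(\mf{b})\va(h)$.

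The main obstacle is the bookkeeping in $\Delta(Y)$: \equref{comultsmashprodcoalg} carries two copies of $\Phi_2$ and two of $\Phi_2^{-1}$, and one must track carefully which $p_-$-legs survive against the counit and how the single coaction $\l(y)$ redistributes between the left $H$-leg and the sign/grouplike data on the right. Once $\Delta(Y)$ is secured, parts (ii) and (iii) demand no fresh coalgebra computation: the inputs $\un{\Delta}(y)=\ov{x}\ot y+y\ot\ov{x}$ and $\l(y)$ are identical to those of (i), so the reassociator and all three coproducts coincide, and only the relations $XY$, $YX$ differ as recorded above. Finally, the identity $\un{\un{B}}=\un{B}^{op+,cop-}$ noted in the preceding Remarks, combined with the compatibility of the biproduct construction with the $op+,cop-$ operation, delivers the assertion that $\un{\un{H}}$ is the op-version of $\un{H}$.
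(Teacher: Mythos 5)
Your proposal is correct and coincides with the paper's intended argument: the proof of \prref{6 dim quasi-bialgebras} is explicitly left to the reader, and the expected verification is exactly your direct computation pushing the data \equref{secbraidedbialg}--\equref{fourthbraidedbialg} through the smash product \equref{smashp}, the comultiplication \equref{comultsmashprodcoalg} and the biproduct reassociator $1_B\ot X^1\ot 1_B\ot X^2\ot 1_B\ot X^3$. The delicate points all check out --- in particular the residual $p_-$-legs in the $\ov{x}\ot y$-summand of $\Delta(Y)$ cancel (coefficients $-2-2+4=0$), the $y\ot\ov{x}$-summand yields $Y\ot X-2YP_-\ot P_-X$ with $-2YP_-\ot P_-X=-(Y+GY)\ot P_-X$ forced by $YG=-GY$, i.e.\ by $g\cd y=-y$, and since \equref{comultsmashprodcoalg} involves no multiplication of $B$, parts (ii) and (iii) indeed inherit the coalgebra structure verbatim, with the op-claim also readable directly off the computed presentations because $\Phi^{-1}=\Phi$.
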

\begin{proof}
It is left to the reader.
\end{proof}

\begin{proposition}\label{ssqbip rnk3}
    A $3$-dimensional Hopf algebra within the category of  left Yetter-Drinfeld modules over $H(2)$ is  isomorphic, in the notation of 
		Proposition \ref{3dYD}, to either $B_{C_6}$ or $B_*$.
\end{proposition}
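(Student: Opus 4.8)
The plan is to feed a $3$-dimensional Hopf algebra $B$ through the Yetter-Drinfeld decomposition of Corollary~\ref{YDH2 3casesforB}, discard all but one case, and then import the ordinary classification of Proposition~\ref{3dYD}. First I would note that $B$ is in particular a $3$-dimensional bialgebra in ${}_\htw^\htw{\cal YD}$, so by Corollary~\ref{YDH2 3casesforB} its underlying Yetter-Drinfeld module is, up to isomorphism, of one of the types (\ref{3ydh2c1})--(\ref{3ydh2c3}). Lemma~\ref{lemmaYDH2 case 3} excludes type (\ref{3ydh2c3}) already at the bialgebra level, and the remarks following Proposition~\ref{lemmaYDH2 case 2} show that the only bialgebras of type (\ref{3ydh2c2}) are $\un{B}$, $\un{\un{B}}$ and $\un{\un{B}}^c$, none of which is a braided Hopf algebra. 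Consequently $B$ must be of type (\ref{3ydh2c1}), that is, $B\cong M_0\oplus M_{2i}\oplus M_{2j}$ as an object of ${}_\htw^\htw{\cal YD}$ for some $i,j\in\{0,1\}$.

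The decisive observation is that every object of type (\ref{3ydh2c1}) carries \emph{trivial} $\htw$-action: by Proposition~\ref{simp mod htw} the summands $M_0$ and $M_2$ satisfy $g\cd m=m$. Since $\Phi_2=1\ot1\ot1-2p_-\ot p_-\ot p_-$ and $\va(p_-)=0$, the tensor components of the reassociator are annihilated by $\va$, so $\Phi_2$ acts as the identity on any such $B$; hence the associativity constraint disappears from (\ref{modalg2}) and \equref{ydc2}, and in the comodule-compatibility and bialgebra identities (\ref{modalg3}), \equref{ydc3} and (\ref{moltcon2}) every $\Phi_2$-contribution collapses, while the coaction runs over the grouplikes $1,g$ exactly as in $k[C_2]$. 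Spelling this out shows that, on objects with trivial action, the braided monoidal structure of ${}_\htw^\htw{\cal YD}$ agrees with that of the ordinary category ${}_{k[C_2]}^{k[C_2]}{\cal YD}$ (the braiding there being the plain flip). Thus $B$ is equivalently a $3$-dimensional braided Hopf algebra in ${}_{k[C_2]}^{k[C_2]}{\cal YD}$ with trivial $k[C_2]$-action.

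At this point Proposition~\ref{3dYD} finishes the argument: the only $3$-dimensional braided Hopf algebras in ${}_{k[C_2]}^{k[C_2]}{\cal YD}$ are $B_{C_6}$, $B_{S_3}$ and $B_*$. Of these, $B_{C_6}$ and $B_*$ carry the trivial action, whereas $B_{S_3}$ is defined by the nontrivial action $g\cd x=x^2$; equivalently, its $(-1)$-eigenvector $x-x^2$ would force a simple summand with $g$ acting by $-1$ and trivial coaction, which is none of $M_0,\dots,M_3$. Hence $B\cong B_{C_6}$ or $B\cong B_*$. For completeness I would recompute the coactions to confirm $B_{C_6}\cong M_0\oplus M_0\oplus M_0$ and $B_*\cong M_0\oplus M_0\oplus M_2$ (using $\l(x-x^2)=g\ot(x-x^2)$ and $\mfq^2=-1$), so that both genuinely arise as type-(\ref{3ydh2c1}) objects and are non-isomorphic, since only $B_*$ contains the summand $M_2$.

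I expect the only real obstacle to be the bridging step of the second paragraph: carefully justifying that triviality of the action renders $\Phi_2$ invisible and the braiding symmetric, so that a braided Hopf algebra in ${}_\htw^\htw{\cal YD}$ of type (\ref{3ydh2c1}) is \emph{literally the same datum} as one in the ordinary $k[C_2]$-Yetter-Drinfeld category. Once this identification is secured, the result is immediate from Proposition~\ref{3dYD}, and everything else is bookkeeping with the two admissible coactions matching $B_{C_6}$ and $B_*$.
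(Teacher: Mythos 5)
Your proposal is correct and follows essentially the same route as the paper's proof: both use Corollary~\ref{YDH2 3casesforB}, Lemma~\ref{lemmaYDH2 case 3} and the remarks after Proposition~\ref{lemmaYDH2 case 2} to force $B\cong M_0\oplus M_{2i}\oplus M_{2j}$, observe that the resulting trivial $\htw$-action makes the reassociator disappear from all the defining relations so that $B$ is literally a Hopf algebra in ${}_{k[C_2]}^{k[C_2]}{\cal YD}$ with trivial action, and then conclude from Proposition~\ref{3dYD} that only $B_{C_6}$ and $B_*$ qualify. Your closing verification that $B_{C_6}\cong M_0\oplus M_0\oplus M_0$ and $B_*\cong M_0\oplus M_0\oplus M_2$ is a correct (and welcome) bookkeeping supplement consistent with the paper's earlier computations.
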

\begin{proof}
    It follows from the results proved so far 
		that all $3$-dimensional Hopf algebras in ${}_\htw^\htw {\cal YD}$ are isomorphic, as Yetter-Drinfeld modules, to $M_0\otimes M_{2i}\otimes M_{2j}$.
    In particular, $\htw$ acts trivially on all $3$-dimensional Hopf algebras in ${}_\htw^\htw {\cal YD}$; therefore, the reassociator disappears in all the relations from (\ref{modalg1}) to (\ref{moltcon2}). Consequently, we reduced to the relations that determine the Hopf algebras within ${}_{k[C_2]}^{k[C_2]} {\cal YD}$. 
		This implies that the $3$-dimensional Hopf algebras in ${}_\htw^\htw {\cal YD}$ coincide with those in ${}_{k[C_2]}^{k[C_2]} {\cal YD}$ on which $\htw$ acts trivially.
\end{proof}

\begin{remark}\label{ssbip htw}
     Proposition \ref{ssqbip rnk3} implies that all $6$-dimensional quasi-Hopf algebras with a projection on $\htw$ are semisimple. Indeed, we know from Proposition 
		\ref{3dYD} that the biproduct quasi-Hopf algebras $B_{C_6}\times \htw$ and $B_*\times \htw$ are isomorphic as algebras to $k[C_6]$ and $k^{S_3}$, respectively. 
 \end{remark}

\begin{corollary}\label{bipR3D6}
    All biproduct quasi-Hopf algebras that are defined by $3$-dimensional braided Hopf algebras within ${}_{\htw}^{\htw}{\cal YD}$ are semisimple.
\end{corollary}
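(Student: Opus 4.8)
The plan is to reduce everything to the two surviving cases and then invoke the algebra identifications already in hand. The essential input is Proposition~\ref{ssqbip rnk3}: a $3$-dimensional \emph{Hopf} algebra $B$ in ${}_\htw^\htw{\cal YD}$ is isomorphic, as a braided Hopf algebra, to one of $B_{C_6}$ or $B_*$ of Proposition~\ref{3dYD}. Thus it suffices to show that the two biproduct quasi-Hopf algebras $B_{C_6}\times\htw$ and $B_*\times\htw$ are semisimple.

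First I would recall, exactly as in Remark~\ref{ssbip htw}, that Proposition~\ref{3dYD} produces algebra isomorphisms $B_{C_6}\times\htw\cong k[C_6]$ and $B_*\times\htw\cong k^{S_3}$; these are isomorphisms of $6$-dimensional associative $k$-algebras. Since $k$ has characteristic zero, Maschke's theorem shows that the group algebra $k[C_6]$ is semisimple, while $k^{S_3}$ is the commutative reduced algebra of functions on the finite set $S_3$, so that $k^{S_3}\cong k^{6}$ as an algebra and is semisimple as well. As semisimplicity is a property of the underlying associative algebra that is preserved under algebra isomorphism, both biproducts are semisimple, and a quasi-Hopf algebra is semisimple precisely when its underlying algebra is. This yields the corollary.

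The point is that essentially no labour is left at this stage: all of it has been spent upstream, in showing that a $3$-dimensional braided Hopf algebra in ${}_\htw^\htw{\cal YD}$ can only have Yetter-Drinfeld module type $M_0\oplus M_{2i}\oplus M_{2j}$. The genuinely delicate steps are the exclusion of the remaining module types, carried out in Lemma~\ref{lemmaYDH2 case 3} for $M_0\oplus M_{2i+1}\oplus M_{2j+1}$ and in Proposition~\ref{lemmaYDH2 case 2} together with the subsequent Remarks for $M_0\oplus M_{2i}\oplus M_{2j+1}$; the bialgebras $\un{B}$, $\un{\un{B}}$, $\un{\un{B}}^c$ arising there are shown not to admit an antipode, hence give no Hopf algebras and no potentially non-semisimple biproducts. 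Granting this classification input, together with the fact that on the type $M_0\oplus M_{2i}\oplus M_{2j}$ the $\htw$-action is trivial, so that $p_-$ acts as zero, the reassociator acts as the identity, and the problem collapses to ordinary Hopf algebras over $k[C_2]$, the semisimplicity of every such biproduct is immediate.
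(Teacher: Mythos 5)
Your proof is correct, and within the statement's literal scope it is essentially the paper's argument: Proposition \ref{ssqbip rnk3} leaves only $B_{C_6}$ and $B_*$, the triviality of the $\htw$-action makes the reassociator drop out of the smash product multiplication \equref{smashp} (since $(\va\ot\va\ot\Id)(\Phi^{-1})=1$), so $B_{C_6}\times\htw$ and $B_*\times\htw$ are, as algebras, $k[C_6]$ and $k^{S_3}\cong k^6$, both semisimple over a field of characteristic zero --- exactly the content of Remark \ref{ssbip htw}. The one genuine divergence is scope. The paper's own proof does not start from a braided Hopf algebra over $\htw$: it takes an arbitrary $6$-dimensional biproduct quasi-Hopf algebra of rank $3$ over a $2$-dimensional quasi-Hopf algebra $H$, observes that up to a twist $H$ is either $k[C_2]$ or $\htw$ (see \cite[Example 1.16]{bcpvo}), uses \cite[Theorem 5.1]{bn} to transport the biproduct structure along the twist, and then settles the $k[C_2]$ branch by semisimplicity of all $6$-dimensional Hopf algebras and the $\htw$ branch by Remark \ref{ssbip htw}. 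This extra generality is not cosmetic: in Case 4 of \thref{6ss} the corollary is invoked for biproducts over $k_\Phi[C_2]$ with $\Phi$ an arbitrary $3$-cocycle, and your version would need the twist-reduction step grafted on before it could serve there. Within the statement as phrased, though, your argument is complete, including the correct remark that the nontrivial coaction on $B_*$ is irrelevant to the algebra identification, because the smash product multiplication involves only the action.
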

\begin{proof}
Let $B\times H$ be a $6$-dimensional biproduct quasi-Hopf algebra that is free of rank $3$ over the $2$-dimensional quasi-Hopf algebra $H$. 
Up to twisting, $H$ is either $k[C_2]$ or $\htw$ (see e.g. \cite[Example 1.16]{bcpvo}). By \cite[Theorem 5.1]{bn}, $B\times H$ is isomorphic to a 
biproduct quasi-Hopf algebra, a free module of rank $3$ over either $k[C_2]$ or $\htw$. 
Then, our assertion follows from Remark \ref{ssbip htw} and the fact that all $6$-dimensional Hopf algebras are semisimple.
\end{proof}

\section{Quasi-Hopf algebras of dimension $6$}\selabel{6dmqHas}
\setcounter{equation}{0}
Let $p<q$ be prime numbers. In \cite[Theorem 6.3]{ego}, Etingof and Gelaki classify the fusion categories of Frobenius-Perron dimension $pq$ with all the 
simple objects having integer dimension. As a byproduct, they obtain also the classification of the $pq$-dimensional semisimple quasi-Hopf algebras in terms of their category of representations. For $p=2$, a special situation occurs in the classification result quoted above; when $pq=6$ this special case is attributed to T. Chmutova.

In this section we prove that any $6$-dimensional quasi-Hopf algebra is semisimple, and this fact complete the classification of quasi-Hopf algebras 
in dimension $6$ (over a field $k$ of characteristic zero and algebraically closed). We get also the classification of the finite tensor categories with Frobenius-Perron dimension $6$ with all the objects having integer Frobenius-Perron dimension. 

To prove the classification results mentioned above we need the lemma below. Recall that a $k$-algebra $A$ is basic if all the irreducible representations of $A$ 
are $1$-dimensional or, equivalently, if $\frac{A}{J_A}\simeq k\times\cdots\times k$ as an algebra, where $J_A$ is the Jacobson radical of $A$.  

\begin{lemma}\lelabel{nonssisbasic}
Any $6$-dimensional non-semisimple quasi-Hopf algebra is basic. 
\end{lemma}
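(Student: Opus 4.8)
The plan is to show that a $6$-dimensional non-semisimple quasi-Hopf algebra $H$ cannot carry an irreducible representation of dimension $\geq 2$, which forces $H/J_H\cong k\times\cdots\times k$. Write $H/J_H\cong\prod_i M_{d_i}(k)$, so that $\sum_i d_i^2=\dim_k(H/J_H)$. Since $H$ is non-semisimple, $J_H\neq 0$ and hence $\sum_i d_i^2\leq 5$. The counit provides the trivial representation, so $d_i=1$ for at least one index. If some $d_j\geq 2$ then $d_j^2\geq 4$, and together with the trivial representation this already gives $\sum_i d_i^2\geq 5$; equality must then hold, no $d_i$ can be $\geq 3$ (as $9>5$), and there cannot be two indices with $d_i\geq 2$. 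Thus the only configuration to be excluded is $H/J_H\cong k\times M_2(k)$, with $\dim_k J_H=1$ and exactly two simple modules: the trivial one $\mathbf 1$ and one module $X$ with $\dim_k X=2$. It therefore suffices to rule this case out.

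First I would pass to the finite tensor category ${\cal C}={}_H{\cal M}$ of finite-dimensional left $H$-modules. Since $H$ is a quasi-Hopf algebra, the forgetful functor ${\cal C}\to{}_k{\cal M}$ is an exact quasi-tensor functor, so $\dim_k(\cdot)$ and the Frobenius--Perron dimension coincide on every object; in particular $\mathrm{FPdim}(X)=2$ and $\mathrm{FPdim}({\cal C})=\dim_k H=6$. As $H$ is non-semisimple, the unit object $\mathbf 1$ is not projective, so its projective cover $P_0$ satisfies $\dim_k P_0\geq 2$, while $\dim_k P(X)\geq\dim_k X=2$. Decomposing the regular module gives $6=\dim_k H=\dim_k P_0+2\dim_k P(X)$, and the two lower bounds force $\dim_k P_0=\dim_k P(X)=2$. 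Consequently $P(X)=X$, i.e. $X$ is a \emph{simple projective} object of Frobenius--Perron dimension $2$, while $P_0$ is a non-split self-extension of $\mathbf 1$ (and one checks $X^*\cong X$ and $X\otimes X\cong P_0\oplus X$ from these multiplicities).

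Next I would derive a contradiction from the existence of this simple projective object $X$. Because $X$ is projective, so is its dual $X^*$, and hence so is the internal endomorphism algebra $A:=X\otimes X^*$, an algebra object in ${\cal C}$ with $\mathrm{FPdim}(A)=\mathrm{FPdim}(X)\,\mathrm{FPdim}(X^*)=4$. As a projective object $A$ is an exact algebra, so the category ${\cal C}_A$ of right $A$-modules in ${\cal C}$ is an exact ${\cal C}$-module category, and the Frobenius--Perron dimension theory of module categories over finite tensor categories yields $\mathrm{FPdim}({\cal C}_A)=\mathrm{FPdim}({\cal C})/\mathrm{FPdim}(A)=6/4=3/2$. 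This is impossible, because the Frobenius--Perron dimension of a module category is always an algebraic integer, whereas $3/2$ is rational but not integral. Hence no $2$-dimensional simple module can exist, and $H$ is basic.

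The hard part is precisely this last step. The dimension-theoretic bookkeeping of the first two paragraphs does not by itself exclude the configuration $k\times M_2(k)$: the associated fusion data $[X]^2=2[\mathbf 1]+[X]$, with $X$ self-dual simple projective and $P_0$ a length-two self-extension of $\mathbf 1$, is internally consistent at the level of the Grothendieck ring and the tensor identity, so no purely combinatorial contradiction is available. One genuinely needs an input from the theory of finite tensor categories, namely the integrality of Frobenius--Perron dimensions of exact module categories (equivalently, the divisibility $\mathrm{FPdim}(X)^2\mid\mathrm{FPdim}({\cal C})$ for a simple projective object $X$), to produce the required $4\mid 6$ and close the argument.
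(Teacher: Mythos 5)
Your opening reduction (ruling out $\dim_k H_0=4$ and pinning the problem down to $H/J\cong k\times M_2(k)$ with $\dim_k J=1$) is correct and is exactly the paper's first step, and your second paragraph is also sound: $\mathbf 1$ is not projective because projectives form a tensor ideal, the regular decomposition gives $6=\dim_k P_0+2\dim_k P(X)$, and the bounds force $X$ to be simple projective of dimension $2$. The proof breaks at precisely the step you flag as the crux. The formula $\mathrm{FPdim}(\mathcal{C}_A)=\mathrm{FPdim}(\mathcal{C})/\mathrm{FPdim}(A)$ is a theorem only for connected \'etale (commutative separable) algebras in braided fusion categories (Davydov--M\"uger--Nikshych--Ostrik); your $A=X\otimes X^*$ is neither commutative nor living in a braided or semisimple category, and no such formula holds for general exact algebras. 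Indeed it cannot hold: by the internal-Hom theorem of Etingof--Ostrik \cite{eo}, if $X$ generates the regular module category then $\mathcal{C}_A\simeq\mathcal{C}$ as $\mathcal{C}$-module categories for $A$ the internal endomorphism algebra of $X$ --- and your $X$ is such a generator, since $P_0$ is a direct summand of the projective object $X^*\otimes X$ --- so any sensible invariant of $\mathcal{C}_A$ agrees with that of the regular module category and is independent of $\mathrm{FPdim}(A)$. Moreover your justification ("$X$ projective, hence $A$ exact, hence the formula") applies verbatim to the internal End of a projective generator of $\mathrm{Rep}$ of the $4$-dimensional Taft algebra, where it yields the non-integer $4/16$ and would thus "disprove" an algebra that exists. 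The fallback you call equivalent, $\mathrm{FPdim}(X)^2\mid\mathrm{FPdim}(\mathcal{C})$ for $X$ simple projective, is a theorem for fusion categories of integer Frobenius--Perron dimension (see \cite[Section 8]{eno}), but your category is non-semisimple, and no analogue for simple projective objects in finite tensor categories is proved or cited; as you yourself concede, without it no contradiction is reached. So the argument has a genuine gap at its only nontrivial step.

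For comparison, the paper disposes of the case $k\times M_2(k)$ by an elementary computation needing no tensor-categorical input beyond the quasi-bialgebra axioms. By Wedderburn--Malcev, $H=\mathbb{S}\oplus J$ with $\mathbb{S}\simeq k\times M_2(k)$ and $J=kx$, $x^2=0$; since $\overline{\va}$ is the unique character of $\mathbb{S}$, the one-dimensional ideal $J$ satisfies $E_{ij}x=xE_{ij}=0$ for the matrix units $E_{ij}$. The strict counit axioms then force $\Delta(x)=1\otimes x+\sum_{i,j}E_{ij}\otimes(\sum_{s,t}\l_{st}^{ij}E_{st}+\mu_{ij}x)+x\otimes(1+\sum_{i,j}\gamma_{ij}E_{ij}+\theta x)$, and multiplicativity of $\Delta$ gives $0=\Delta(x^2)=\Delta(x)^2=2\,x\otimes x+(\hbox{terms in }E\otimes E)$, whose $x\otimes x$-component cannot vanish in characteristic zero --- a contradiction. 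Note that this argument is strictly stronger than what your route could deliver even if repaired: it uses no antipode and so also excludes quasi-bialgebra structures of this shape, whereas proving the divisibility $\mathrm{FPdim}(X)^2\mid\mathrm{FPdim}(\mathcal{C})$ for simple projectives in integral finite tensor categories is a substantially harder problem than the lemma itself.
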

\begin{proof}
By the way of contradiction, assume that there 
is a $6$-dimensional non-semisimple quasi-Hopf algebra that is not basic. Then, if we denote by $J$ the Jacobson radical of $H$, 
$J$ is a non-zero nilpotent ideal of $H$ and $H_0:=\frac{H}{J}$ is a semisimple algebra of dimension $4$ or $5$ (owing to the Wedderburn-Artin theorem). The case 
${\rm dim}_kH_0=4$ cannot occur since $\va$, the counit of $H$, provides a $1$-dimensional (irreducible) representation of $H$, forcing 
$H_0\simeq k^4$; thus $H$ is basic, which is false. 

So, we reduced to the case  ${\rm dim}_kH_0=5$, when $H_0\simeq k\times M_2(k)$ as an algebra; $M_2(k)$ is the  algebra of 
$2$-square matrices. Also,  
${\rm dim}_kJ=1$, and we assume that $J$ is generated by the non-zero element $x$. Last but not least, denote by $p: H\ra H_0$ the canonical projection, 
a $k$-algebra morphism. 

Owing to \cite[Theorem 1.4.9]{abe}, there exists a semisimple subalgebra $\mathbb{S}$ of $H$ such that 
$H=\mathbb{S}\oplus J$ as $k$-vector spaces; obviously, $\mathbb{S}\simeq H_0$ as $k$-vector spaces. Since $J^2=0$, it is easy to check 
(see also \cite{bm}) that $\mathbb{S}\ni s\mapsto p(s)\in H_0$ is a $k$-algebra isomorphism, hence $\mathbb{S}\simeq k\times M_2(k)$ as algebras. 
Clearly, $J$ nilpotent implies $\va(x)=0$. It follows then that $\va$, the counit of $H$, 
restricts to an algebra morphism $\ov{\va}: \mathbb{S}\ra k$: $\va(s\oplus \lambda x)=\ov{\va}(s)$, for all $h=s\oplus \lambda x\in 
H=\mathbb{S}\oplus J$. As $\mathbb{S}\simeq k\times M_2(k)$ has a unique $1$-dimensional representation, it follows that $\ov{\va}$ is the unique 
algebra morphism from $\mathbb{S}$ to $k$, and that ${\rm Ker}(\ov{\va})\simeq M_2(k)$. 
This last fact allows us to see that the multiplication of $H$ can be expressed in terms of the multiplication of 
$\mathbb{S}$ and $x$ as 
\[
(s \oplus \l x)(s'\oplus \l' x)=ss'\oplus (\l'\ov{\va}(s) + \l \ov{\va}(s'))x,
\]
for all $s, s'\in \mathbb{S}$ and $\l x, \l' x\in J$; $\l, \l'$ are arbitrary scalars. We wrote an element in $H=\mathbb{S}\oplus J$ as 
$s\oplus \l x$ in order to distinguish the $\mathbb{S}$ and $J$ components of the elements in $H$. Thus, if we take $\{1, E_{ij}\mid 1\leq i,j\leq 2\}$ 
the canonical basis of $\mathbb{S}$ obtained from the canonical basis of $k\times M_2(k)$ through the algebra isomorphism $\mathbb{S}\simeq k\times M_2(k)$ 
then $\{1, E_{ij}, x\mid 1\leq i, j\leq 2\}$ is a basis of $H$ and the algebra structure of $H$ is determined by the following rules: $1$ is the unit of $H$ 
and, since $\va(x)=0$ and $\va(E_{ij})=0$ for all $1\leq i, j\leq 2$, 
\[
x^2=0,~xE_{ij}=E_{ij}x=0~\mbox{and}~E_{ij}E_{st}=\delta_{j,s}E_{it},~\forall~1\leq i, j, s, t\leq 2.
\]    
 
We look now at $\Delta(x)$, an element in $H\ot H$ which we will write as 
\[
\Delta(x)=1\ot (\l 1 + \sum\limits_{i,j}\l_{ij}E_{ij} + \mu x) + \sum\limits_{i,j}E_{ij}\ot (\l'_{ij}1 + \sum\limits_{s,t}\l_{st}^{ij}E_{st} + \mu_{ij}x) 
+ x\ot (\gamma 1 + \sum\limits_{i,j}\gamma_{ij}E_{ij} + \theta x),
\]
for some scalars $\l, \mu, \l_{ij}, \l'_{ij}, \l_{st}^{ij}, \mu_{ij}, \gamma, \gamma_{ij}$ and $\theta$. 
By using that $\va$ is counit for $\Delta$, we get $\l=0$, $\mu=\gamma=1$ and $\l_{ij}=\l'_{ij}=0$, for all $1\leq i, j\leq 2$. Thus 
\[
\Delta(x)=1\ot x + \sum\limits_{i,j}E_{ij}\ot (\sum\limits_{s,t}\l_{st}^{ij}E_{st} + \mu_{ij}x) 
+ x\ot (1 + \sum\limits_{i,j}\gamma_{ij}E_{ij} + \theta x).
\]  
But $\Delta$ is multiplicative, and therefore $\Delta(x)\Delta(x)=\Delta(x^2)=0$. On the other hand, 
\begin{eqnarray*}
\Delta(x)\Delta(x)&=&2x\ot x + \sum\limits_{i, j}E_{ij}\ot \sum\limits_{s,t,u,v}\l^{iu}_{st}\l_{tv}^{uj}E_{sv},
\end{eqnarray*}   
and thus $\Delta(x^2)$ cannot be zero. We landed in this way to a contradiction, so our proof is finished. 
\end{proof}

Let $H$ be a basic quasi-Hopf algebra which is not semisimple, and take $J$ the Jacobson radical of $H$. It is well known that $J$ is a 
quasi-Hopf ideal of $H$; see for instance \cite[Lemma 1.1]{GrSo}, the proof in the quasi-Hopf case is identical. Thus $H_0=\frac{H}{J}$ 
has a unique quasi-Hopf algebra structure with respect to which the canonical projection $p: H\ra H_0$ becomes a quasi-Hopf algebra morphism.   

Let $A:={\rm gr}(H)=\bigoplus_{m\geq 1}H_m$ be the graded algebra associated to the filtration of $H$ by powers of $J$; here 
$H_m:=\frac{J^m}{J^{m+1}}$, for all $m\geq 0$, where $J^0:=H$. Then $A$ has a canonical quasi-Hopf algebra structure, with reassociator $\Phi$ 
defined by a $3$-cocycle of the group $G$ obtained through algebra isomorphisms $H_0\simeq k\times\cdots\times k\simeq k^G\simeq k[G]$. 
Consequently, when $H$ is not semisimple and basic we have quasi-Hopf algebra morphisms 
$
\xymatrix{
H_0 \ar[r]<2pt>^i &\ar[l]<2pt>^{\pi} {\rm gr}(H)
}
$ 
such that $\pi i=\Id_{H_0}$, hence ${\rm gr}(H)$ is a biproduct quasi-Hopf algebra between a braided Hopf algebra  
$B$ in ${}_{k_\Phi[G]}^{k_\Phi[G]}{\cal YD}$ and $k_\Phi[G]$. Furthermore, each homogenous component $H_m$ of $H_0$ can be seen as an object 
in the category of two-sided two-cosided Hopf module over $H_0$. Since any object of this category is free as a left and right $H_0$-module it follows that so 
is every $H_m$; see also \cite{sch}.    

\begin{theorem}\thlabel{6ss}
    Any $6$-dimensional quasi-Hopf algebra is semisimple.
\end{theorem}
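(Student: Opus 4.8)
The plan is to argue by contradiction, assembling the structural facts recorded before the statement. Suppose $H$ is a $6$-dimensional quasi-Hopf algebra that is \emph{not} semisimple, and let $J$ denote its Jacobson radical, so that $J\neq 0$. By \leref{nonssisbasic}, $H$ is basic; hence $H_0:=H/J\cong k^{\dim H_0}$ as an algebra and inherits a quasi-Hopf structure of the form $k_\Phi[G]$ for a finite group $G$ with $|G|=\dim H_0$. Passing to the associated graded ${\rm gr}(H)=\bigoplus_{m\geq 0}J^m/J^{m+1}$ yields a biproduct quasi-Hopf algebra ${\rm gr}(H)\cong B\times k_\Phi[G]$ with $B$ a braided Hopf algebra in ${}_{k_\Phi[G]}^{k_\Phi[G]}{\cal YD}$. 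Since the biproduct is free over $k_\Phi[G]$, we have $\dim H=\dim {\rm gr}(H)=\dim B\cdot|G|$, so $|G|$ divides $6$ and $\dim B=6/|G|$.

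The first step is to extract two reductions from the grading. The positive part $\bigoplus_{m\geq 1}J^m/J^{m+1}$ of ${\rm gr}(H)$ is a graded, hence nilpotent, two-sided ideal, and under the biproduct identification it is exactly $B^+\# k_\Phi[G]$, where $B^+=\ker\un{\va}_B$. Because $J\neq 0$ and $J$ is nilpotent we have $J\neq J^2$, so this ideal is nonzero; equivalently $\dim B\geq 2$. This immediately rules out $|G|=6$. It also rules out $|G|=1$: there $k_\Phi[G]=k$ and ${\rm gr}(H)=B$ would be a $6$-dimensional ordinary Hopf algebra, necessarily semisimple by \cite{dspq}, contradicting the existence of the nonzero nilpotent ideal $B^+$. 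Hence only $|G|=2$ (with $\dim B=3$) and $|G|=3$ (with $\dim B=2$) can occur.

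The second step is to feed each surviving case into the classification of the relevant braided Hopf algebras, which will show that ${\rm gr}(H)$ is in fact semisimple. For $|G|=3$, so $\dim B=2$, the $2$-dimensional braided Hopf algebras over the $3$-dimensional group quasi-Hopf algebras were classified in \cite{bm}, and the biproduct quasi-Hopf algebras they produce are semisimple. For $|G|=2$, so $\dim B=3$, Corollary~\ref{bipR3D6} --- which rests on the classification carried out in Section~\ref{3dH} (Proposition~\ref{3dYD}, Proposition~\ref{ssqbip rnk3} and Remark~\ref{ssbip htw}) --- asserts that every biproduct quasi-Hopf algebra built from a $3$-dimensional braided Hopf algebra over a $2$-dimensional group quasi-Hopf algebra is semisimple. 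Thus in either admissible case ${\rm gr}(H)$ is semisimple.

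This is the contradiction: a semisimple algebra has zero Jacobson radical, yet we have exhibited the nonzero nilpotent ideal $B^+\# k_\Phi[G]\subseteq{\rm gr}(H)$ (as $\dim B\geq 2$). Therefore no non-semisimple $6$-dimensional quasi-Hopf algebra exists, and every $6$-dimensional quasi-Hopf algebra is semisimple. The genuinely delicate input --- and the step I expect to be the main obstacle --- is not this final assembly but the classification underlying Corollary~\ref{bipR3D6}: one must verify, as in Lemma~\ref{lemmaYDH2 case 3} through Proposition~\ref{lemmaYDH2 case 2}, that the braided \emph{bialgebra} structures which are not semisimple (the families $\un{B}$, $\un{\un{B}}$, $\un{\un{B}}^c$) admit no antipode, and so never arise as the $B$ attached to an actual biproduct quasi-Hopf algebra ${\rm gr}(H)$.
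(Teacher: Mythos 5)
Your proof is correct and follows the paper's strategy in all essentials: argue by contradiction, reduce to the basic case via \leref{nonssisbasic}, pass to ${\rm gr}(H)\simeq B\times k_\Phi[G]$ as in the discussion preceding the theorem, feed in the classification inputs (\cite{bm} for $\dim B=2$, Corollary~\ref{bipR3D6} for $\dim B=3$, \cite{dspq} for the ordinary Hopf case $|G|=1$), and derive the clash between semisimplicity of ${\rm gr}(H)$ and its nonzero radical. The one genuine difference is how the cases are eliminated. The paper runs five cases on $\dim_k J\in\{1,\dots,5\}$: it disposes of $\dim_k J=1,2$ using the freeness of each individual homogeneous component $H_m$ over $H_0$ (the two-sided two-cosided Hopf module argument via \cite{sch}), must argue $J^2=0$ when $\dim_k J=3$, and treats the subcase $\dim_k J^2=2$ of $\dim_k J=4$ separately. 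You instead invoke only the global freeness of the biproduct, $\dim H=\dim{\rm gr}(H)=\dim B\cdot |G|$, so that $|G|$ divides $6$; this kills $\dim_k H_0\in\{4,5\}$ in one stroke and makes the number of graded components irrelevant, so no $J^2$ analysis is needed. Your identification of the positive part $\bigoplus_{m\geq 1}J^m/J^{m+1}$ with $B^+\# k_\Phi[G]$ (the kernel of the biproduct projection $\pi$), combined with $J\neq J^2$ from nilpotency of $J\neq 0$, correctly exhibits the nonzero nilpotent ideal contradicting semisimplicity --- slightly more explicit than the paper's observation that the Jacobson radical of ${\rm gr}(H)$ is $J\neq 0$. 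What your route buys is uniformity and less bookkeeping; what the paper's componentwise route buys is that the obstruction is read off degreewise without appealing to the kernel of $\pi$. Both versions rest on exactly the same hard content, namely the classification in Section~\ref{3dH} and in \cite{bm}, and your closing remark correctly locates the real work there.
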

\begin{proof}
Assume that there exists a non-semisimple quasi-Hopf algebra of dimension $6$. \leref{nonssisbasic} guarantees us that $H$ is basic and then we can apply 
all the results mentioned above. We distinguish the following cases.  
    
$\un{Case~ 1}$: ${\rm dim}_kJ=1$. We have ${\rm dim}_kH_0=5$ and $J^2=0$, thus $H_1$ cannot be free over $H_0$.   
    
$\un{\rm Case~ 2}$: ${\rm dim}_kJ=2$. Similarly, ${\rm dim}_kH_0=4$ and since ${\rm dim}_kH_1$ is at most $2$ and so it cannot be a non-zero power of $4$, 
$H_1$ is not free over $H_0$.

$\un{\rm Case~ 3}$: ${\rm dim}_kJ=3$. As ${\rm dim}_kH_0=3$, we must have $J^2=0$. Therefore, ${\rm gr}(H)=H_0\oplus J$ has dimension $6$ and is a biproduct 
quasi-Hopf algebra between a braided Hopf algebra $B$ in ${}_{k_\Phi[C_3]}^{k_\Phi[C_3]}{\cal YD}$ and $k_\Phi[C_3]$, for a certain $3$-cocycle $\Phi$ 
of $k[C_3]$. Then $B$ has dimension $2$ and by the results obtained in \cite{bm} it follows that ${\rm gr}(H)$ is semisimple. This is false because 
in this particular case the Jacobson radical of ${\rm gr}(H)$ is $J$ and $J$ is not the null ideal. 

$\un{\rm Case~ 4}$: ${\rm dim}_kJ=4$. We have ${\rm dim}_kH_0=2$, which imposes ${\rm dim}_kJ^2\in \{0, 2\}$. If ${\rm dim}_kJ^2=0$, 
${\rm gr}(H)=H_0\oplus J$ has again dimension $6$ and it is a biproduct between a braided Hopf algebra 
$B$ in ${}_{k_\Phi[C_2]}^{k_\Phi[C_2]}{\cal YD}$ and $k_\Phi[C_2]$, for a certain $3$-cocycle $\Phi$ 
of $k[C_2]$. Then $B$ has dimension $3$ and by the results obtained in the previous section it follows that ${\rm gr}(H)$ is semisimple. But 
the Jacobson radical of ${\rm gr}(H)$ is $J\not=0$, a contradiction. We also reach the same contradiction when ${\rm dim}_kJ^2=2$, because  
$J^3=0$ and so ${\rm gr}(H)=H_0\oplus H_1\oplus H_2$ is of dimension $6$, too. 
 
$\un{\rm Case~ 5}$: ${\rm dim}_kJ=5$. Since $H_0\simeq k$ is the trivial Hopf algebra, ${\rm gr}(H)$ is an ordinary Hopf algebra. By dimension counting,
 it is straightforward, although it is a bit lengthy, to check that ${\rm gr}(H)$ has dimension $6$ in any case. 
It follows that ${\rm gr}(H)$ is a semisimple Hopf algebra. On the other hand, the Jacobson radical of ${\rm gr}(H)$ has dimension $5$, 
so it cannot be the null ideal.  

So our assumption is false, therefore any quasi-Hopf algebra of dimension $6$ is semisimple.     
    \end{proof}

\begin{remark}
    Since the quasi-bialgebras in \prref{6 dim quasi-bialgebras} are all non-semisimple algebras, it follows that they do not admit a quasi-Hopf algebra structure.
\end{remark}

    We translate our result in the language of tensor categories. For the definition of the Frobenius-Perron dimension and of a tensor or 
		fusion category we refer to \cite{ce}.
		
    \begin{corollary}
        Let ${\cal C}$ be a finite tensor category of Frobenius-Perron dimension $6$ such that every object of  ${\cal C}$ has integer Frobenius-Perron dimension. Then ${\cal C}$ is a fusion category.
    \end{corollary}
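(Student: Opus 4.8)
The plan is to pass everything through the Tannaka--Krein reconstruction and then quote the main theorem. By \cite[Proposition 2.6]{eo}, any finite tensor category all of whose objects have integer Frobenius--Perron dimension is tensor equivalent to the representation category ${}_H{\cal M}$ of a finite dimensional quasi-Hopf algebra $H$; moreover, under this equivalence the Frobenius--Perron dimension of the category is matched with the vector-space dimension $\dim_k H$, since ${\rm FPdim}$ of a finite tensor category equals the Frobenius--Perron dimension of its regular object, which for ${}_H{\cal M}$ is the class of the regular representation $H$. Applying this to ${\cal C}$, I obtain a quasi-Hopf algebra $H$ with $\dim_k H=6$ and a tensor equivalence ${\cal C}\simeq {}_H{\cal M}$. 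It is precisely the integrality hypothesis on the objects of ${\cal C}$ that guarantees the reconstructed object is a genuine quasi-Hopf algebra, so that \thref{6ss} becomes applicable.

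Next I would invoke \thref{6ss}, which asserts that every $6$-dimensional quasi-Hopf algebra is semisimple; hence the reconstructed $H$ is a semisimple algebra. It then remains only to transport this back into a statement about ${\cal C}$. The category ${}_H{\cal M}$ of finite dimensional left $H$-modules is a semisimple abelian category exactly when $H$ is semisimple as an algebra, and a finite tensor category is a fusion category precisely when it is semisimple (see \cite{ce}). Combining these, the equivalence ${\cal C}\simeq {}_H{\cal M}$ shows that ${\cal C}$ is semisimple, and therefore a fusion category, as claimed.

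The only points requiring care are the two dictionaries: that the reconstruction of \cite[Proposition 2.6]{eo} identifies ${\rm FPdim}({\cal C})$ with $\dim_k H$ (so that the dimension $6$ passes correctly to the algebra side), and that semisimplicity of the algebra $H$ is equivalent to semisimplicity, hence to the fusion property, of ${}_H{\cal M}$. Both are standard consequences of the reconstruction formalism rather than genuine obstacles; the entire mathematical content of the corollary is carried by \thref{6ss}, and the argument here is merely its categorical reformulation.
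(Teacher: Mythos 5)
Your proposal is correct and follows exactly the paper's route: the paper also derives the corollary directly from \thref{6ss} together with \cite[Proposition 2.6]{eo}, and your write-up merely spells out the standard dictionaries (FPdim of ${\cal C}$ versus $\dim_k H$, and semisimplicity of $H$ versus the fusion property of ${}_H{\cal M}$) that the paper leaves implicit. No gaps.
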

    \begin{proof}
        It is a direct consequence of \thref{6ss} and \cite[Proposition 2.6]{eo}.
    \end{proof}

We end the paper by giving an explicit presentation for all $6$-dimensional quasi-Hopf algebras. Our presentation is 
in concordance with the classification of fusion categories produced in \cite{ego}.

Let $\xi$ be a $n$th root of unity and $\sigma$ a generator for the cyclic group of order $n$, $C_n$. For $a, n$ integers with $n$ non-zero, 
denote by $\lfloor\frac{a}{n}\rfloor$ the integer part of the rational number $\frac{a}{n}$. Then, 
it is well known that $3$-cocycle representatives for the group $C_n$ are given by $\{\phi_{\zeta^a}\mid 0\leq a\leq n-1\}$, where 
    \begin{eqnarray}\eqlabel{cn3cc}
        && \phi_{\zeta^a}(\sigma^i,\sigma^j,\sigma^l)= \zeta^{ai\lfloor\frac{j+l}{n} \rfloor },~\forall~0\leq i, j, l\leq n-1. 
    \end{eqnarray}

As far as we are concerned, $3$-cocycle representatives for the group $S_3$ can be deduced from the formula (3.2.8) in \cite{wp}. Namely, they are given by 
\begin{equation}\eqlabel{cn3S3}
    \omega_p = \sum_{I,J,L=0}^1\sum_{i,j,l=0}^2 \mfq^{p(-1)^{J+L}i\lfloor \frac{(-1)^Lj + l}{3}\rfloor} (-1)^{pIJL},
\end{equation}   
where $\mfq$ is a primitive $3$th root of unity and $p\in\{0,\cdots,5\}$. This fact completes \cite[Corollary 4.2]{ego}, 
where it is shown that $H^3(S_3, \mathbb{C}^*)\simeq C_6$. 

Consider the group $S_3$ generated by $\tau$ and $\sigma$; $\tau$ is a transposition, while $\sigma$ is a cycle of length $3$. 

    \begin{theorem}\thlabel{fulldes6dimqHas}
        Let $\mfq$ be a primitive $3$th root of unity and $\xi$ a primitive $6$th root of unity. Then, any $6$ dimensional quasi-Hopf algebra 
				is twist equivalent to one of the following:
        
        $\bullet$ $(k[C_6],\Phi_a)$, the group Hopf algebra $k[C_6]$, seen as a quasi-Hopf algebra with reassociator
        \begin{equation}\eqlabel{reassC6}
            \Phi_a=\sum_{i,j,l=0}^5 \xi^{ai\lfloor \frac{j+l}{3}\rfloor} 1_i\ot 1_j\ot 1_l,
        \end{equation} 
        where, if $g$ generates $C_6$, $\{1_i:=\frac{1}{6}\sum_{t=0}^5\xi^{-ti}g ^t\mid 0\leq i\leq 5\}$ is the set of primitive idempotents of 
				$C_6$. There are $6$ quasi-Hopf algebras of this type which are not twist equivalent, given by $0\leq a \leq 5$.

        $\bullet$ $(k^{S_3}, \Phi_p)$, the dual of the group Hopf algebra $k[S_3]$ endowed with the reassociator 
        \begin{equation}\eqlabel{reassfunS3}
            \Phi_p = \sum_{I,J,L=0}^1\sum_{i,j,l=0}^2 \mfq^{p(-1)^{J+L}i\lfloor \frac{(-1)^Lj + l}{3}\rfloor} (-1)^{pIJL}\ 
						P_{\tau^I\sigma^i}\ot P_{\tau^J\sigma^j}\ot P_{\tau^L\sigma^l}.
        \end{equation}
  Here $\{P_{\tau^I\sigma^i}\mid 0\leq I\leq 1, 0\leq i\leq 2\}$ is the basis of $k^{S_3}$ dual to the basis 
$\{\tau^I\sigma^i\mid 0\leq I\leq 1, 0\leq i\leq 2\}$ of $k[S_3]$. There are $6$ quasi-Hopf algebras of this type which are not twist equivalent, 
given by $0\leq p\leq 5$.

        $\bullet$ The group Hopf algebra $k[S_3]$.

        $\bullet$ $(k[S_3],\Psi_a)$, the group Hopf algebra $k[S_3]$ with reassociator $\Psi_a$ obtained from a non-trivial $3$-cocycle of 
				$C_3=\left<\sigma\right>$ which commutes with $\tau\ot \tau \ot \tau$; namely,  
        \begin{equation}\eqlabel{specreass}
            \Psi_a = \sum_{i,j,l=0}^2 \mfq^{ai\lfloor \frac{j+l}{3}\rfloor+ ai(\lfloor \frac{(j+l)'}{2}\rfloor - \lfloor \frac{j}{2}\rfloor - 
						\lfloor \frac{l}{2}\rfloor)} \ {\bf 1}_i\ot {\bf 1}_j\ot {\bf 1}_l,
        \end{equation}
        where $\{{\bf 1}_i:=\frac{1}{3}\sum_{t=0}^2\mfq^{-ti}\sigma^t\mid 0\leq i\leq 2\}$ is the set of primitive idempotents of 
				$C_3=\le\sigma\ri$ and $(m)'$ denotes the remainder of the division of the integer $m$ by $3$. There are $2$ quasi-Hopf algebras of this type 
				which are not twist equivalent, defined by $a\in \{1, 2\}$,
    \end{theorem}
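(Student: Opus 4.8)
The plan is to derive the classification from \thref{6ss} together with the categorical results of Etingof and Gelaki. First I would invoke \thref{6ss} to conclude that every $6$-dimensional quasi-Hopf algebra is semisimple. By the Tannaka--Krein reconstruction recalled in the introduction (\cite[Proposition 2.6]{eo}), passing to the representation category identifies the twist-equivalence classes of $6$-dimensional semisimple quasi-Hopf algebras with the tensor-equivalence classes of fusion categories of Frobenius--Perron dimension $6$ whose simple objects all have integer Frobenius--Perron dimension. The whole problem therefore reduces to enumerating these fusion categories, which is exactly the content of \cite[Theorem 6.3]{ego} in the case $pq=6$.

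Next I would sort the categories produced by \cite[Theorem 6.3]{ego} into the four families of the statement and exhibit a realizing quasi-Hopf algebra for each, so that the fifteen algebras correspond bijectively to fifteen pairwise inequivalent fusion categories. The pointed categories split according to whether the grading group is $C_6$ or $S_3$: the $C_6$-graded ones are realized by the group Hopf algebra $k[C_6]$ carrying a reassociator attached to a class in $H^3(C_6,\mathbb{C}^*)\cong C_6$, and the $S_3$-graded ones by the dual partner $k^{S_3}$, whose simple comodules are indexed by $S_3$ and fuse by group multiplication, carrying a reassociator attached to a class in $H^3(S_3,\mathbb{C}^*)\cong C_6$. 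Since the relevant automorphism $g\mapsto g^{-1}$ of $C_6$ acts trivially on $H^3(C_6,\mathbb{C}^*)$, and $S_3$ has no outer automorphisms, all $6+6$ pointed categories are distinct. The unique non-pointed untwisted category $\mathrm{Rep}(S_3)$ is realized by the ordinary group Hopf algebra $k[S_3]$, while the remaining non-pointed categories---the special case attributed to Chmutova---are realized by $k[S_3]$ equipped with the non-trivial reassociator $\Psi_a$.

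Then I would carry out the explicit translation of cohomological data into reassociators. Starting from the standard cyclic-group cocycle representatives $\phi_{\zeta^a}$ of \equref{cn3cc}, substituting $\zeta=\xi$ and expanding in the primitive-idempotent basis $\{1_i\}$ of $k[C_6]$ produces the family $\Phi_a$ of \equref{reassC6}; the parallel computation in the dual basis $\{P_{\tau^I\sigma^i}\}$ of $k^{S_3}$, starting from the $S_3$-cocycles $\omega_p$ of \equref{cn3S3}, produces $\Phi_p$ of \equref{reassfunS3} and along the way confirms the identification $H^3(S_3,\mathbb{C}^*)\cong C_6$ of \cite[Corollary 4.2]{ego}. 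For the Chmutova family I would present $\Psi_a$ of \equref{specreass} as the reassociator induced by a non-trivial $3$-cocycle of the subgroup $C_3=\le\sigma\ri$, symmetrized so as to commute with $\tau\ot\tau\ot\tau$; the floor-function correction terms are precisely what restore the normalized $3$-cocycle conditions \equref{q3}--\equref{q4} after this $\tau$-symmetrization. Finally I would check that each of $\Phi_a$, $\Phi_p$, $\Psi_a$ satisfies \equref{q3} and \equref{q4}, so that all fifteen data genuinely define quasi-Hopf algebras, tally the count $6+6+1+2=15$, and read off pairwise non-twist-equivalence from the inequivalence of the corresponding fusion categories.

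I expect the main obstacle to be the Chmutova family $(k[S_3],\Psi_a)$: one must verify that a $\tau$-invariant non-trivial $C_3$-cocycle extends to a genuine reassociator on the whole group algebra $k[S_3]$, that the resulting quasi-Hopf algebra is non-pointed yet distinct up to twist both from $\mathrm{Rep}(S_3)$ and from the two pointed families, and that the two surviving parameters $a\in\{1,2\}$ account for exactly those categories in \cite[Theorem 6.3]{ego} which are neither of the form $\mathrm{Vec}_G^\omega$ nor $\mathrm{Rep}(S_3)$. Reconciling the explicit formula \equref{cn3S3} with the abstract structure of $H^3(S_3,\mathbb{C}^*)$ and keeping precise track of which cohomology classes survive or merge under twist equivalence is the delicate bookkeeping at the heart of the argument; the semisimple reduction and the citation of \cite{ego} do the conceptual work, and this final matching is the ``not mostly'' part that remains.
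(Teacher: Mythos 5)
Your proposal is correct and follows essentially the same route as the paper: reduce via \thref{6ss} and \cite[Theorem 6.3]{ego}, translate the cocycle representatives \equref{cn3cc} and \equref{cn3S3} into the primitive-idempotent basis of $k[C_6]$ and the dual basis of $k^{S_3}$, and isolate the family $(k[S_3],\Psi_a)$ as the one point needing genuine verification, which the paper settles by exhibiting the explicit $2$-cochain $g_a(\sigma^i,\sigma^j)=\mfq^{ai\lfloor \frac{j}{2}\rfloor}$ making $\psi_a$ cohomologous to $\phi_{\mfq^a}$ and then checking invariance under $\sigma\mapsto\sigma^{-1}$ directly. One minor correction to your phrasing: the floor-function terms in \equref{specreass} arise as the coboundary of $g_a$ and serve to restore $\tau$-invariance within the fixed cohomology class (the $3$-cocycle condition \equref{q3}--\equref{q4} being automatic for a cohomologous representative), rather than to restore the cocycle condition itself after symmetrization.
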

    \begin{proof}
        Follows from \thref{6ss} and  \cite[Theorem 6.3]{ego}. 
				To get \equref{reassC6} we have identified the Hopf algebra of functions on $C_6$, $k^{C_6}$, with the group Hopf algebra $k[C_6]$ by using the 
				isomorphism provided by the correspondence $P_{g^i}\mapsto 1_i$, $0\leq i\leq 5$; see \cite[Proposition 3.57]{bcpvo}. Then we made use of 
				\equref{cn3cc} to get for free all the non-twist equivalent structures on $k[C_6]$. 
				
				Likewise, to get \equref{reassfunS3} we used the fact that $k[S_3]$ is a dual quasi-Hopf algebra 
				(for the definition see for instance \cite[Section 3.7]{bcpvo}) with reassociator defined by \equref{cn3S3}. Then passing to the dual of it 
				we obtain $k^{S_3}$ with the reassociator mentioned in \equref{reassfunS3}.
				
				The only thing left to prove is the fact that $\psi_a$, $1\leq a\leq 2$, defined by 
        \[
            \psi_a(\sigma^i,\sigma^j,\sigma^l)=\mfq^{ai\lfloor \frac{j+l}{3}\rfloor+ a(\lfloor \frac{(j+l)'}{2}\rfloor - \lfloor \frac{j}{2}\rfloor - \lfloor \frac{l}{2}\rfloor)},
        \]
         is a $3$-cocycle for the group $C_3$ and, moreover, it is invariant under the action of $C_2$; the latter means that 
				$\psi_a(\sigma^i,\sigma^j,\sigma^l)=\psi_a(\sigma^{-i},\sigma^{-j},\sigma^{-l})$, for all $0\leq i, j, l\leq 2$. 
				Towards this end, we show that $\psi_a$ is cohomologus to $\phi_{\mfq^a}$. Indeed, if we take $g_a: C_3\times C_3\ra k$ defined by 
				$g_a(\sigma^i,\sigma^j):=\mfq^{ai\lfloor \frac{j}{2}\rfloor}$, for all $0\leq i, j\leq 2$, then 
         \begin{eqnarray*}
             &&  \phi_{\mfq^a}(\sigma^i,\sigma^j,\sigma^l) {g_a(\sigma^{j},\sigma^{l} )}{g_a(\sigma^{(i+j)'},\sigma^{l} )}^{-1}{g_a(\sigma^{i},\sigma^{(j+l)'} )}{g_a(\sigma^{i},\sigma^{j} )}^{-1}\\
             &&\hspace{1cm}= \mfq^{ai\lfloor\frac{j+l}{3}\rfloor}\mfq^{ai\lfloor \frac{j}{2}\rfloor} \ \mfq^{aj\lfloor \frac{l}{2}\rfloor}\ 
             \mfq^{-a(i+j)\lfloor \frac{l}{2}\rfloor}\  
             \mfq^{ai\lfloor \frac{(j+l)'}{2}\rfloor}\  
             \mfq^{-ai\lfloor \frac{j}{2}\rfloor} = \psi_a(\sigma^i,\sigma^j,\sigma^l).
         \end{eqnarray*}
         Moreover, $\psi_a(\sigma,\sigma,\sigma)=\mfq^a=\psi_a(\sigma^2,\sigma^2,\sigma^2)$ , $\psi_a(\sigma,\sigma,\sigma^2)=1=\psi_a(\sigma^2,\sigma^2,\sigma)$, 
				$\psi_a(\sigma,\sigma^2,\sigma)=1=\psi_a(\sigma^2,\sigma,\sigma^2)$, $\psi_a(\sigma^2,\sigma,\sigma)=\mfq^{2a}=\psi_a(\sigma,\sigma^2,\sigma^2)$. 
				The rest of the invariance conditions for $\psi_a$ follow from $\psi_a$ being normalized, as $g_a(\sigma^i,1)=1=g_a(1,\sigma^j)$ for any $i$ and $j$.
    \end{proof}
    
    \begin{remark}
        $\Psi_a$ produces a quasi-Hopf algebra structure on $k[\left<\sigma\right>]\simeq k[C_3]$, twist equivalent to the one produced by 
				$\Phi_{\mfq^a}$ on $k[C_3]$. In addition, the natural embedding of $k[C_3]$ in $k[S_3]$ can be seen as a quasi-Hopf algebra embedding of  
				$(k[C_3], \Psi_a)$ into $(k[S_3], \Psi_a)$. But there is no quasi-Hopf projection covering this embedding, because $(k[S_3], \Psi_a)$ cannot be described 
				as a biproduct quasi-Hopf algebra.  
    \end{remark}


\end{document}